\newtheorem{thm}{Theorem}[section]
\newtheorem{prop}[thm]{Proposition}
\newtheorem{cor}[thm]{Corollary}
\newtheorem{lem}[thm]{Lemma}
\newtheorem{question}[thm]{Question}
\newtheorem{rem}[thm]{Remark} 
\newcommand{\Omb}{\breve{\Om}}
\newcommand{\cEb}{\breve{\cE}}
\newcommand{\compb}{\breve{\comp}}
\newcommand{\da}{\downarrow}
\DeclareMathOperator{\Fix}{Fix}
\DeclareMathOperator{\crk}{crk}
\DeclareMathOperator{\type}{type}
\DeclareMathOperator{\sdiv}{sd} 
\DeclareMathOperator{\Comp}{comp} 
\begin{document}
\pagestyle{plain}

\title{Ordered set partition posets
}
\author[1]{Bruce E. Sagan}
\author[2]{Sheila Sundaram}
\affil[1]{Department of Mathematics, Michigan State University, East Lansing, MI 48824}
\affil[2]{School of Mathematics, University of Minnesota, Minneapolis, MN 55455}

\date{\today\\[10pt]
	\begin{flushleft}
	\small Key Words: $k$-Catalan number, lattice, M\"obius function, ordered set partition, rank selection, recursive atom ordering, symmetric function,  symmetric group action, trivial representation, Whitney homology
	                                       \\[5pt]
	\small AMS subject classification (2020):  
     (Primary) 05A18  (Secondary) 06A07, 06A11, 20C30, 57Q05
	\end{flushleft}}

\maketitle

\begin{abstract}
A set partition is said to be ordered if the blocks of the partition are listed in a specific order. The ordered set partitions  of $\{1,\ldots,n\}$, with a unique minimal element adjoined, form a lattice $\Om_n$ with respect to refinement. The lattice $\Om_n$ is well known to be  the face lattice of  the permutohedron.  In this paper we study the combinatorics and topology  of two subposets of $\Om_n$ with restricted block sizes, either all  divisible by some fixed $d\ge2$, or all  congruent to $1$ modulo $d$.  For the $d$-divisible case we derive an explicit recursive atom ordering for the lattice, as well as formulas for 
the action of the symmetric group on the Whitney homology and the rank-selected homology, and also for the multiplicity of the trivial representation. In the 1 mod $d$ case we show that the poset has a curious  interval structure related to the $k$-Catalan numbers.
Our investigations lead to  enumerative invariants in both cases.  Open problems and avenues for future research are scattered throughout.
\end{abstract}


\section{Introduction}

For  nonnegative integers $m,n$ with $m\le n$ we will use the notation
\beq
\label{[n]}
[n]=\{1,2,\ldots,n\} \qmq{and} [m,n]=\{m,m+1,\ldots,n\}.
\eeq
If $S$ is any finite set then we will use the notations $\#S$ or $|S|$ for the cardinality of $S$.  

A {\em set partition} $\pi$ of $S$ is a family of nonempty subsets $B_1,\ldots,B_k$ called {\em blocks} such that  $S=\uplus_i B_i$ (disjoint union) and we write $\pi=B_1/\ldots/B_k$.  We may leave out set braces and commas in examples.  Note that the order of the blocks does not matter so that, for example, $134/26/5=26/5/134$.
The partition is {\em $d$-divisible} if $d$ divides $\#B_i$ for all $i$.   Partitions can be partially ordered by {\em refinement} where $B_1/\ldots /B_k\le C_1/\ldots/C_l$ if each $C_j$ is a union of certain $B_i$.  The poset of all partitions of $[n]$ ordered by refinement is a lattice denoted $\Pi_n$. Also, ordering the $d$-divisible partitions and adding a unique minimal element $\zh$ results in a lattice $\Pi_n^{(d)}$.  Notice that the symmetric group $\fS_n$ acts on both these lattices.  The combinatorial, topological, and representation-theoretic properties of $\Pi_n$ and $\Pi_n^{(d)}$  have been extensively studied.  See, for example, \cite{chr:PLMS1986,com:itf,EH:fpl,HS:fcp,HH:2003,su:AIM1994,su:RPS70-2016,wac:bhd,woo:cce}.

An {\em ordered set partition} of a finite set $S$ is a sequence of non-empty subsets $\om=(B_1,B_2,\ldots,B_k)$ with  
$\uplus_i B_i = S$, where the $B_i$ are again called {\em blocks}.  Note the use of parentheses and commas, as opposed to forward slashes, to indicate that now the order of the blocks matters.  
As with ordinary set partitions, we will often leave out the set braces and commas in each $B_i$.  
The ordered set partitions of $[3]$ are displayed in Figure~\ref{Om3}.
We will use letters near the end of the Greek alphabet for ordered partitions.
The number of blocks of $\om$ is its {\em length}, denoted $\ell(\om)$.
The {\em Stirling numbers of the second kind} are 
$$
S(n,k) = \text{the number of unordered partitions of $n$ into $k$ blocks}.
$$
It follows that 
\beq
\label{k!S}
k!S(n,k) = \text{\# of ordered partitions of $n$ into $k$ blocks}.
\eeq
Ordered set partitions and Stirling numbers have connections to combinatorics, number theory, polyhedral theory, and coinvariant algebras.  They appear in a closeted form as far back as a paper of Carlitz~\cite[equation (11)]{car:af}.  See the papers of  Ishikawa, Kasraoui, and Zeng~\cite{IKZ:ems} or Sagan and Swanson~\cite{SS:qsn} for history and references.

As in the unordered case, if $d\ge 1$ is an integer then we say that $\om$ is {\em $d$-divisible}
if $d$ divides $\# B_i$ for all $i$.  In this case we write
$$
\om\comp_d S.
$$
We will drop the $d$ if $d=1$ so that there is no restriction on the block sizes.
To illustrate, the $2$-divisible partitions of $\{a,b,c,d\}$ are
$$
(ab,cd),\ (ac,bd),\ (ad,bc),\ (bc,ad),\ (bd,ac),\ (cd,ab), \text{ and } (abcd).
$$

\begin{figure}
    \centering
\begin{tikzpicture}
\draw(0,0) node{$\zh$};  
\draw(-5,2) node{$(1,2,3)$};
\draw(-3,2) node{$(1,3,2)$};
\draw(-1,2) node{$(2,1,3)$};
\draw(1,2) node{$(2,3,1)$};
\draw(3,2) node{$(3,1,2)$};
\draw(5,2) node{$(3,2,1)$};
\draw(-5,4) node{$(12,3)$};
\draw(-3,4) node{$(1,23)$};
\draw(-1,4) node{$(13,2)$};
\draw(1,4) node{$(2,13)$};
\draw(3,4) node{$(23,1)$};
\draw(5,4) node{$(3,12)$};
\draw(0,6) node{$(123)$};  
\draw (-0,.3)--(-5,1.7) (0,.3)--(-3,1.7) (0,.3)--(-1,1.7)
(0,.3)--(5,1.7) (0,.3)--(3,1.7) (0,.3)--(1,1.7)
(0,5.7)--(-5,4.3) (0,5.7)--(-3,4.3) (0,5.7)--(-1,4.3)
(0,5.7)--(5,4.3) (0,5.7)--(3,4.3) (0,5.7)--(1,4.3)
(-5,2.3)--(-5,3.7) (-5,2.3)--(-3,3.7)
(-3,2.3)--(-3,3.7) (-3,2.3)--(-1,3.7)
(-1,2.3)--(-5,3.7) (-1,2.3)--(1,3.7)
(1,2.3)--(1,3.7) (1,2.3)--(3,3.7)
(3,2.3)--(-1,3.7) (3,2.3)--(5,3.7)
(5,2.3)--(3,3.7) (5,2.3)--(5,3.7)
;
\end{tikzpicture}
    \caption{The poset $\Om_3$}
    \label{Om3}
\end{figure}

We partially order the $d$-divisible partitions of $S$ by insisting that 
$(B_1,B_2,\ldots,B_k)$ is covered by all elements of the form 
$$
(B_1,\ldots,B_{i-1},B_i\uplus B_{i+1},B_{i+2},\ldots B_k) 
\text{ for } 1\le i<k
$$
and extending by transitivity.  In other words, one is permitted to merge adjacent blocks, keeping the new block in the same relative position with the other blocks.
From this one sees that $\om\le\psi$ in this partial order if each block of $\psi$ is a union of adjacent blocks of $\om$, and one block $B$ of $\psi$ is to the left of another $C$ if the blocks of $\om$ contained in $B$ are to the left of those in $C$.
Adding a unique minimal element $\zh$  results in a bounded poset which we will call $\Om_S^{(d)}$.  In the case 
$S=[n]$ we will write this as 
$$
\Om_n^{(d)}=\{\zh\}\ \uplus\ \{\om \mid \om\comp_d [n]\}
$$
where when $n=0$ the second set is considered to be empty.
We will shorten $\Om_n^{(1)}$ to just $\Om_n$.  
And if we write $\om\in\Om_n^{(d)}$ then we are tacitly assuming that $\om$ is an ordered set partition in $\Om_n^{(d)}$, i.e., $\om\neq\zh$.
If we write $x\in\Om_n^{(d)}$ then $x$ could be any element of the poset, including $\zh$ and similarly for other letters near the end of the Latin alphabet.
See Figure~\ref{Om3} for the Hasse diagram of $\Om_3$.

The posets $\Om_n^{(d)}$ have appeared in the literature as tools to help understand other objects such as the permutohedron for the case $d=1$ in the paper of Billera and Sarangarajan~\cite[Proposition 1.4]{bilsar:fpsac1994}, or pointed set partitions as in the article of Ehrenborg and Jung~\cite{EJ:resptn}. 
But, to our knowledge, the $\Om_n^{(d)}$ have not been the primary focus of any previous work.

The purpose of the current article is to study their  combinatorial, topological, and representation-theoretic properties.  The rest of this paper is organized as follows.  
In Section~\ref{cpOmnd} we concentrate on the combinatorics of $\Om_n^{(d)}$.
\begin{itemize}
    \item Theorem~\ref{OmndComb} gives an expression for the M\"obius function of $\Om_n^{(d)}$ in terms of generalized Euler numbers.
    \item We give an explicit recursive atom ordering of the lattice in Theorem~\ref{Om^d-RAO}. 
\end{itemize}
The symmetric group $\fS_n$ acts naturally on $\Om_n^{(d)}$ by permuting the entries of the blocks.  Section~\ref{asg} looks at the induced action on various associated homology groups.  
\begin{itemize}
    \item Theorem~\ref{OmdWhit} shows that the Frobenius characteristic of the action on the Whitney homology and its dual can be expressed in terms of complete homogeneous symmetric functions $h_\la$ where $\la$ is an integer partition with all parts divisible by $d$.
\end{itemize}
In Section~\ref{rs}, rank-selected and corank-selected subposets of $\Om_n^{(d)}$ are considered.  
\begin{itemize}
\item Theorem~\ref{thm:sdP-rksel-recS} offers a recursive  formula for the rank-selected homology of the barycentric subdivision of any  Cohen-Macaulay poset.  We establish the equivalence with a different  description due to  Stanley \cite{sta:gap}.
    \item Theorem~\ref{rksel-Omd} specializes  this  recurrence to the Frobenius characteristic of the corank-selected homology of $\Om_{dm}^{(d)}$.
\end{itemize}
 Section~\ref{mtr} is devoted to studying the multiplicity $b_m(T)$ of the trivial representation of $\fS_{dm}$ acting on the homology of  the subposet of $\Om_{dm}^{(d)}$ selected by coranks $T$.
\begin{itemize}
    \item In Theorem~\ref{TrivRepCom} we show that $b_m(T)$ counts permutations in $\fS_{m-1}$ with descent set $T$.
    \item A similar result, Theorem~\ref{thm:triv-restrict-zeros}, is obtained for the action of $\fS_{dm-1}$ considered as the subgroup of $\fS_{dm}$ which fixes $dm$.  We obtain what appears to be a new refinement of $m!$ by subsets of $[m-1]$.
    \item Proposition~\ref{DeOm-quot} determines the topology of the quotient complex $\Delta(\Om_{dm})/\fS_{dm}$.  
\end{itemize}
Finally, Section~\ref{bsr} considers the combinatorics of the subposet of $\Om_n$
where all the block sizes are congruent to $1$ modulo $d$.  
\begin{itemize}
    \item The M\"obius function is given up to sign by a generalization of the Catalan numbers, the \emph{$k$-Catalan numbers}, as shown in Theorem~\ref{thm:muOmb}. 
    \item Theorem~\ref{bOmndComb} gives generating function  formulas for the M\"obius number, that we call the \emph{remainder 1 Euler numbers}.  
\end{itemize}

\section{Combinatorial properties of $\Om_n^{(d)}$}
\label{cpOmnd}

In this section, we will study the combinatorics of $\Om_n^{(d)}$.  Throughout, we will assume that $d\ge1$ is a divisor of $n\ge0$.  

In our first result we will collect some elementary properties of this poset.
For more information about posets,  including definitions of any undefined terms, see the texts of Sagan~\cite[Chapter 5]{sag:aoc} 
or Stanley~\cite[Chapter 3]{sta:ec1}.
Assume that $(P,\le)$ is a poset.  All posets in this work will be finite without further mention.
If $x,y\in P$ with $x\le y$ then we have the closed interval
$$
[x,y] = \{z \mid x\le z\le y\}.
$$
 
If $P$ has a unique minimal element or unique maximal element then they are denoted $\zh$ 
or $\oh$, respectively.
If $P$ contains both of these elements it is said to be {\em bounded}.
We write $x\lt y$ if $x$ is {\em covered} by $y$ in $P$, that is, $x<y$ and there is no $z$ with $x<z<y$. 
If $P$ has a $\zh$ then the {\em atoms} $a$ of $P$ are the elements covering $\zh$.
The poset is {\em ranked} if, for every $x\in P$, all maximal chains from $\zh$ to $x$ have the same length $\ell$.  In that case, we say that $x$ has {\em rank} $\ell$ and write
$$
\rk x =\ell.
$$
We also define the rank of an interval $[x,y]$ to be
$$
\rk(x,y) = \rk y - \rk x.
$$
If $P$ is ranked and has a unique maximal element $\oh$ then we say that $P$ is {\em graded}.  In this case, the {\em corank} of
$x\in P$ is 
$$
\crk x = \rk \oh -\rk x.
$$
A number of our results will be easier to state in terms of corank rather than rank.

We will also need the {\em Boolean algebra}
$$
\cB_n=\{S \mid S\sbe[n]\}
$$
ordered by inclusion as well as the {\em $d$-divisible Stirling numbers of the second kind} which are
$$
S^{(d)}(n,k) =\text{the number of unordered partitions of $n$ into $k$ blocks all of size divisible by $d$}.
$$

Note that, interestingly, the isomorphism in part (d) of the next result depends only on $k$ (the number of blocks of the partition) and not on $n$ (the sum of the parts).
\begin{lem}
\label{OmGen-d}
If $d$ divides $n$ then the poset $\Om_n^{(d)}$ satisfies the following.
\ben
\item[(a)]  It has $\oh =([n])$.
\item[(b)]  Its atoms are the $\om$ with $\#B=d$ for all blocks $B$ of $\om$.
\item[(c)]  It is ranked.  The rank of  $\om=(B_1,\ldots,B_k)$ is 
$$
\rk\om = n/d-k+1.
$$
The number of $\om$ at corank $k$ is $(k+1)! S^{(d)}(n,k+1)$.
\item[(d)]  For any $\om=(B_1,\ldots,B_k)$ we have
$$
[\om,\oh]\iso \cB_{k-1}.
$$
\item[(e)] For any  ordered partitions $\psi,\om\in\Om_n^{(d)}$ we have
$$
[\psi,\om]\iso \cB_{\rk(\psi,\om)}.
$$

\item[(f)] The poset $\Om_n^{(d)}$ is an atomic lattice but it is not semimodular in general. \hqed

\een
\end{lem}
\begin{proof}
(a)  We need to show that every ordered partition $\om\in\Om_n^{(d)}$ satisfies $\om\le([n])$.  But this clearly follows from the description of the partial order.

\medskip

(b)  Using the description of the partial order again, we have that $\om$ will cover $\zh$ if and only if no block of $\om$ can be written as a disjoint union of two proper $d$-divisible subsets.  But this is equivalent to having all blocks of size $d$.

\medskip

(c)  Consider any maximal chain
$$
C:\zh=\om_0 \lt \om_1 \lt \ldots \lt \om_\ell = \om.
$$
By part (b), $\om_1$ has $n/d$  blocks, all of size $d$.  As we lose a block in each cover $\om_i\lt \om_{i+1}$,  to end with an ordered partition with $k$ blocks we must have
$$
\ell = n/d-k+1.
$$
The result now follows from this discussion and the definition of $S^{(d)}(n,k)$.

\medskip

(d) We will construct an anti-isomorphism $A:\cB_{k-1}\ra [\om,\oh]$.  This will suffice since $\cB_{k-1}$ is self dual. 
Take $\om=(B_1,B_2,\ldots, B_k)$ and number the $k-1$ commas as $1,2,\ldots,k-1$ from left to right to obtain
$$
\om = 
(B_1 \stackrel{1}{,} B_2 \stackrel{2}{,} \ldots, 
B_{k-1} \stackrel{k-1}{,} B_k).
$$
Now given any $S\sbe [k-1]$ we form the ordered partition $A(S)$ by removing any comma not labeled by an element of $S$ and taking the disjoint union of any blocks no longer separated by commas.  For ease of notation, we will often suppress the disjoint union signs and write $B_i B_{i+1}$ in place of $B_i\uplus B_{i+1}$.  For example, if $k=6$ then 
$$
\om=(B_1\stackrel{1}{,} 
B_2 \stackrel{2}{,}  
B_3 \stackrel{3}{,}
B_4 \stackrel{4}{,}
B_5 \stackrel{5}{,}
B_6)
$$
The set $S=\{2,5\}$ gives rise to the ordered set partition
$$
A(S) = (B_1\uplus B_2
\stackrel{2}{,}B_3\uplus B_4\uplus B_5
\stackrel{5}{,}B_6)=
(B_1 B_2
\stackrel{2}{,}B_3  B_4 B_5
\stackrel{5}{,}B_6).
$$
It is easy to check that $A$ is invertible and is a poset anti-isomorphism.

\medskip

(e)  This follows immediately from (d) and the fact that all intervals in a Boolean algebra are Boolean algebras.

\medskip

(f)  Since $\Om_n^{(d)}$ has a $\zh$, to show it is a lattice it suffices to show the existence of a join.  Suppose $\psi,\om\in\Om_n^{(d)}$ where
\begin{align*}
\psi & = (A_1,A_2,\ldots,A_k),\\
\om & = (B_1,B_2,\ldots,B_\ell).
\end{align*}
Find the smallest $i$ such that 
\beq
\label{AB}
A_1\uplus A_2\uplus \ldots \uplus A_i
= B_1\uplus B_2\uplus \ldots \uplus B_j
\eeq
for some $j$.  Such an $i$ exists because if $i=k$ then the union of $A$'s is $[n]$.
Call the union in~\eqref{AB} $C_1$ and removed the corresponding blocks from $\psi$ and $\om$.  Iterate this process to find $C_2$ and so forth.  It is easy to see that
$(C_1, C_2,\ldots C_m)$ is the join of $\psi$ and $\om$.

To prove that $\Om_n^{(d)}$ is atomic, consider any of its ordered set partitions $\om=(B_1,B_2,\ldots,B_\ell)$.  Construct a set of atoms $\cA$ as follows.  Put $A=(A_1,A_2,\ldots,A_{n/d})$ into $\cA$ precisely when 
$A_1\uplus A_2\uplus\cdots\uplus A_i=B_1$
where $di=\#B_1$, and
$A_{i+1}\uplus A_{i+2}\uplus\cdots \uplus A_{i+j}=B_2$
  where $dj=\#B_2$, etc.  It is easy to verify that $\Jn\cA=\om$.

Finally to show that $\Om_n^{(d)}$ is neither upper nor lower semimodular, we use the equivalent conditions in terms of covers.  
Let $m=n/d$ and fix an unordered set partition $\{A_1,A_2,\ldots,A_m\}$ of $[n]$ into blocks of size $d$.
In the upper case, consider the atoms
$\psi=(A_1,A_2,A_3,A_4,\ldots,A_m)$ and $\om=(A_2,A_3,A_1,A_4,\ldots,A_m)$ of $\Om_n^{(d)}$.  Clearly, $\psi\mt\om =\zh$ which they cover.  But 
$\psi\jn\om = (A_1\uplus A_2\uplus A_3,A_4,\ldots,A_m)$ which does not cover $\psi$ or $\om$.  For lower semimodular, one uses the ordered set partitions $\psi'=(A_1,A_2\uplus A_3,A_4,\ldots,A_m)$ and 
$\om'=(A_2,A_1\uplus A_3,A_4,\ldots,A_m)$.
\end{proof}

We now consider the M\"obius function of $\Om_n^{(d)}$.
The {\em M\"obius function} of a finite poset $P$ is a map from the closed intervals $[x,z]$ of $P$ to the integers defined by
$\mu(x,x)=1$ and  either of the two equivalent equations
\begin{align}
\mu(x,z) &= -\sum_{x\le y<z} \mu(x,y) 
\label{muUp}
\\
&= -\sum_{x< y\le z} \mu(y,z)
\label{muDn}
\end{align}
for $x<z$.  If $P$ has a $\zh$ and a $\oh$ we write
$$
\mu(P)=\mu(\zh,\oh).
$$
This function is a far-reaching generalization of the M\"obius function in number theory.

To calculate $\mu(\Om_n^{(d)})$ we will need a generalization of the Euler numbers.
The {\em (ordinary)  Euler numbers}, $E_n$, can be defined  by the generating function
$$
\sum_{n\ge0} E_n\frac{x^n}{n!} = \tan x + \sec x.
$$
These constants have a long and venerable history in combinatorics and number theory.
Now given $d\ge1$ we let $\ze_d$ be a primitive $d$th root of unity.
Define the {\em $d$-divisible Euler numbers}, $\cE_n^{(d)}$, by 
\beq
\label{cEndDef}
\sum_{n\ge0} \cE_n^{(d)} \frac{x^n}{n!} = \frac{d}{e^x+e^{\zeta_d x} + e^{\zeta_d^2 x}+\cdots+e^{\zeta_d^{d-1} x}}
=\frac{1}{1+x^d/d!+x^{2d}/{2d}!+\cdots}.
\eeq
The numbers $\cE_n^{(d)}$ were first considered by Leeming and  MacLeod~\cite{LM:pge} who called them generalized Euler numbers.  They have since  been studied by several people~\cite{ges:cge,KL:cpl,LM:gen,sag:gen}.  See also \cite[p. 64, Eqns. (1.58)-(1.59)]{sta:ec1}. It is not hard to show that 
$$
\cE_n^{(2)} = \case{(-1)^{n/2} E_n}{if $n$ is even.}{0}{if $n$ is odd.}
$$
More generally, $\cE_n^{(d)}=0$ if $d$ does not divide $n$. 

If $\fS_n$ is the symmetric group on $[n]$ then the {\em descent set} of $\pi=\pi_1\pi_2\cdots \pi_n\in \fS_n$ is
$$
\Des\pi =\{i \mid \pi_i>\pi_{i+1}\}.
$$
Let
\beq
\label{Adm}
A_{dm}^{(d)}=\{\pi\in\fS_{dm} \mid \Des\pi = \{d,2d,\ldots,(m-1)d\}.
\eeq
Sagan~\cite[Theorem 3.1]{sag:gen} proved the following.
\bth[\cite{sag:gen}]  
\label{cEndSum}
Suppose $n\ge0$ and $d\ge1$ divides $n$.
\ben
\item[(a)]  We have
$$
\cE_n^{(d)} = \sum_{\om\comp_d\hs{2pt} [n]} (-1)^{\ell(\om)}.
$$
\item[(b)] We have 

\vs{10pt}

\eqqed{
\cE_{dm}^{(d)}= (-1)^n\# A_{dm}^{(d)}.
}
\een
\eth

We will need to associate with an ordered set partiton an ordered integer partition or composition.
A {\em composition} of $n$ is a sequence $\al=(\al_1,\al_2,\ldots,\al_k)$ of positive integers called {\em parts} with $\sum_i \al_i = n$.  In this case we write $\al\comp n$ and call $k=\ell(\al)$ the {\em length} of $\al$.  Any ordered set partition $\om=(B_1,B_2,\ldots,B_k)$ has an associated composition 
called the {\em type} of $\om$ and defined as
$$
\type\, \om = (\#B_1,\#B_2,\ldots,\#B_k).
$$
For example $\type(245,16,3789)=(3,2,4)$.  Given a composition $\al\comp n$, the number of ordered set partitions of that type is clearly a multinomial coefficient
\beq
\label{type=al}
\#\{\om\in\Om_n \mid \type\, \om=\al\} = \binom{n}{\al}=\frac{n!}{\prod_i \al_i!}.
\eeq

The final ingredient we will need is a variant of the notion of poset product introduced by Sundaram \cite[pp.\ 287-288]{su:Jer93}.
Let $(P,\le_P)$ and $(Q,\le_Q)$ be two posets, each having a minimum element.  For any such poset $P$ we let $P^-=P\setm\{\zh\}$.  Then the {\em reduced product} of $P$ and $Q$ is
$$
P\dot{\times} Q =  \{\ (\zh_P,\zh_Q)\ \} \uplus (P^-\times Q^-),
$$
where $\times$ is the usual poset product.
Equivalently, $P\dot{\times} Q$ is the subposet of $P\times Q$ obtained by removing all elements of the form $(\zh,q)$ or $(p,\zh)$.  
The notion of reduced product extends to products of three or more posets in the expected manner.  It is easy to see that if $\om\in\Om_n^{(d)}$ has 
$\type\, \om=(\al_1,\al_2,\ldots,\al_k)$ then
\beq
\label{ZhOmRp}
[\zh,\om] \iso 
\Om_{\al_1}^{(d)} \dot{\times} \Om_{\al_2}^{(d)} \dot{\times}\cdots\dot{\times}\Om_{\al_k}^{(d)}.
\eeq
We will need the following result of Sundaram~\cite[Remark 2.6.1]{su:Jer93}.
\bth[\cite{su:Jer93}]
\label{PxQ}
Let $P,Q$ be posets both having a $\zh$ and a $\oh$.  Then

\vs{10pt}

\eqqed{\mu(P\dot\times Q) = -\mu(P)\mu(Q).}
\eth

We now have everything in place to compute $\mu(\Om_n^{(d)})$.
When $d=1$, the following theorem shows that  $\Om_n$ is Eulerian because it is easy to see from the definition of the Eulerian numbers that $\cE_n^{(1)}=(-1)^n$.  
This observation confirms the known fact that $\Om_n$, 
being the  face lattice of the permutohedron, is an Eulerian poset.
\bth
\label{OmndComb}
In $\Om_n^{(d)}$ where $d$ divides $n$ we have the following M\"obius values.
\begin{enumerate}
\item[(a)]  For $\psi,\om$ ordered partitions with $\psi\le\om$,
$$
\mu(\psi,\om) = (-1)^{\rk(\psi,\om)}.
$$
\item[(b)] For the full poset
$$
\mu(\Om_n^{(d)}) = \cE_n^{(d)}.
$$
\item[(c)] For an ordered partition $\om$ with $\type\om =(\al_1,\al_2,\ldots,a_k)$ we have
$$
\mu(\zh,\om) = (-1)^{k-1} \prod_{i=1}^k \cE_{\al_i}^{(d)}. 
$$
\end{enumerate}
\eth
\begin{proof}
(a)  This follows immediately from Lemma~\ref{OmGen-d}, parts (d) and (e).

\medskip

(b)  Ehrenborg and Jung~\cite{EJ:resptn} studied a simplicial complex $\De_{\vec{c}}$ depending on an integer composition, which they denote by $\vec{c}$, whose last part may be zero.  If $\vec{c}=(d^{n/d})$ then the face lattice of $\De_{\vec{c}}$ is dual to $\Om_n^{(d)}$.  Because of this connection, the current part (b) follows from their Corollary 5.4 and Theorem~\ref{cEndSum} (b) above.

\medskip

(c)  This is an easy consequence of part (b), equation~\eqref{ZhOmRp}, and Theorem~\ref{PxQ}.
\end{proof}

As we have just shown, $\mu(\Om_{dm}^{(d)})$ is, up to sign, the number of  $\pi\in\fS_{dm}$ with 
$\Des\pi=\{d,2d,\ldots,(m-1)d\}$.  By contrast, Stanley showed~\cite{sta:es} that 
$\mu(\Pi_{dm}^{(d)})$, where $\Pi_{dm}^{(d)}$ is the (unordered) partition lattice, is a signed version of the number of $\pi\in\fS_{dm-1}$ with the same descent set.  

\begin{question}
Does one of these two results about the M\"obius function imply the other?
\end{question}

We will now consider the topology and shellability of $\Om_n^{(d)}$. We refer to \cite{Bj:ShellCM1980, BW:lsp, sta:gap} for key definitions.  The face lattice of a convex polytope is always CL-shellable~\cite[Theorem 4.5]{BW:lsp}.
As mentioned earlier, 
$\Om_n$ is the 
face lattice of the permutohedron, but there is no such polytope for $d>1$.  

However, there is a connection with the Boolean lattice $\cB_{dm}$.  The rank-selected homology of the shellable and Cohen-Macaulay Boolean lattice $\cB_n$ is well known  from classical results of Solomon and Stanley \cite{sta:gap}.  Define $\cB_{dm}^{(d)}$ to be the rank-selected subposet consisting of subsets of cardinality divisible by $d$. 
We discuss rank-selection in more detail in the next section, but first we elucidate the connection between $\cB_{dm}^{(d)}$ and  $\Om_{dm}^{(d)}$.  

Given a bounded poset $P$,  let $\bar{P}=P\setminus\{\hat 0, \hat 1\}$, the proper part of $P$. The   order complex 
of $P$  is the simplicial complex $\Delta(\bar{P})$ whose faces are the chains $(\hat 0<x_1<\cdots< x_r<\hat 1)$ of $\bar{P}$, $x_i\in P$.  The chain $(\hat 0<\hat 1)$ corresponds to the empty face.  The face poset $\cL(\Delta(\bar{P}))$ of the order complex is thus the poset of chains $(\hat 0<x_1<\cdots< x_r<\hat 1)$ of $\bar{P}$. 

Now let $P$ be the bounded rank-selected subposet $\cB_{dm}^{(d)}$ of $\cB_{dm}$.
As in \cite[Ex. 3.18.9]{sta:ec1}, 
we have a bijection mapping a chain $(\hat 0=\emptyset\subset X_1\subset\cdots\subset X_r\subset\hat 1)$ in the poset $\cB_{dm}^{(d)}$ to the ordered partition $\omega$, where the $r+1$ blocks of $\omega$ are $B_1=X_1, B_2=X_2\setminus X_1, \ldots , B_{r+1}=[dm]\setminus X_r$.  Since every cardinality $|X_i|$ is a multiple of $d$, $\omega\in \Omega_{dm}^{(d)}$.    The chains are the simplices in the order complex $\Delta(\overline{\cB_{dm}^{(d)}})$ of the poset $\cB_{dm}^{(d)}$, so the poset of chains is  the face lattice of the order complex $\Delta(\overline{\cB_{dm}^{(d)}})$.  The empty chain $(\hat 0, \hat 1)$ is mapped to $[dm]$, so in fact this bijection shows that the face lattice of the order complex $\Delta(\overline{\cB_{dm}^{(d)}})$ is poset-isomorphic to the dual 
${\Omega_{dm}^{(d)}}^*$ of the lattice $\Omega_{dm}^{(d)}$ (appending a $\hat 1$ in both cases):
\begin{equation}\label{eqn:iso-Omd-Boolean-md}
\cL(\Delta(\overline{\cB_{dm}^{(d)}})\iso {\Omega_{dm}^{(d)}}^*.
\end{equation}
In particular, because the order complex of the face lattice of $\Delta({P})$ is the barycentric subdivision $\mathrm{sd}\,\Delta({P})$ of $\Delta({P})$ (see, e.g., \cite[\S 8]{sta:gap}), this gives a
homeomorphism of simplicial complexes 
\begin{equation}\label{eqn:barycen}
\mathrm{sd}\, \Delta(\cB_{dm}^{(d)}) \simeq \Delta({\Omega_{dm}^{(d)}}^*)
\simeq \Delta({\Omega_{dm}^{(d)}}). 
 \end{equation}

Rank-selection preserves shellability by \cite[Theorem 4.1]{Bj:ShellCM1980}, and the face lattice of a shellable simplicial complex admits a recursive coatom ordering by \cite[Theorem 4.3]{BW:lsp}.  Putting these facts together, the  poset isomorphism~\eqref{eqn:iso-Omd-Boolean-md} establishes the important fact 
that $\Om_{dm}^{(d)}$ admits a recursive atom ordering, and is therefore (CL-)shellable and Cohen-Macaulay for all $d\ge 1$.

Now CL-shellability is equivalent to having a recursive atom ordering~\cite[Theorem 3.1]{BW:lsp}, defined as follows.
Let $\cA(P)$ be the atoms of a finite, ranked poset with a $\zh$ and a $\oh$.  A linear ordering $a_1,a_2,\ldots,a_t$ of $\cA(P)$ is a {\em recursive atom ordering} or {\em RAO} if
\ben
\item[(R1)]  For all $j$, the interval $[a_j,\oh]$ admits an RAO where the atoms coming first are those covering some $a_i$ for $i<j$
\item[(R2)]  For all $i<k$, if $a_i,a_k< y$ for some $y$ then there exists $a_j$ with $j<k$ and an $x\in P$ with 
$$
a_j,a_k \lt x \le y.
$$
\een

We wish to give an explicit RAO for $\Om_n^{(d)}$.  This RAO is particularly nice because it is an extension of the well-known lexicographic order. 
Interestingly, this strategy does {\em not} give an RAO when we consider the case of block sizes congruent to $1$ modulo $d$ in Section~\ref{bsr}.
Sagan~\cite[Lemma 3]{sag:ses} proved the following lemma which will be useful.
\ble[\cite{sag:ses}]
\label{SemiRAO}
Suppose $P$ is a poset such that $[a,\oh]$ is a semimodular lattice for all $a\in\cA(P)$.  Then P admits an RAO if and only if some ordering of the atoms of $P$ satisfies condition (R2) above.
\ele

\bth\label{Om^d-RAO}
If $d$ divides $n$ then the lexicographic order on the atoms of $\Om_n^{(d)}$ is an RAO.  Thus  $\Om_n^{(d)}$ is both CL-shellable  and Cohen-Macaulay.
\eth
\bprf
The Boolean algebra is a semimodular lattice.  So,
 by combining Lemma~\ref{OmGen-d} (d) and Lemma~\ref{SemiRAO}, it suffices to show that some ordering of the atoms of $\Om_n^{(d)}$ satisfies condition (R2) in the definition of an RAO. The proof now falls into two cases.  The first is similar to what happens when $d=1$, so we will begin by demonstrating that $\Om_n$ satisfies (R2).

  By Lemma~\ref{OmGen-d} (b), if $d=1$ then each atom has the form
$a=(p_1,p_2,\ldots,p_n)$ and so can be identified with the permutation
$p_1 p_2\ldots p_n$. 

We claim that the lexicographic order  $\le_l$ on permutations satisfies (R2).
Take any ordered set partition $\om$ and any atoms
$a_i, a_k<\om$ with $a_i<_l a_k$ so that $i<k$.  Write
\begin{align*}
a_i &= (p_1,p_2,\ldots,p_n),\\   
a_k &= (r_1,r_2,\ldots,r_n).
\end{align*}
Let $s$ be the first index such that $p_s\neq r_s$.  
Since $a_i$ is lexicographically smaller than $a_k$ we must have $p_s<r_s$.
Also, these two elements are in the same positions in their respective atoms and so they must be in the same block $B$ of $\om$.  

Now let
$p_l, p_{l+1},\ldots, p_m$ and $r_l,r_{l+1},\ldots,r_m$ be the elements of $B$ listed as they appear in $a_i$ and $a_k$, respectively.
By minimality of $s$, the element $p_s$ must appear after $r_s$ in $a_k$.
Since $p_s<r_s$, there must be a descent in the permutation
$r_s, r_{s+1},\ldots r_t$ where $r_t=p_s$.  In other words, there is an index $u\in[s,t-1]$ such that $r_u>r_{u+1}$.
Let $a_j$ be $a_k$ with elements $r_u$ and $r_{u+1}$ switched.  So, by the previous inequality, $a_j\le_l a_k$.  Furthermore 
$$
\psi:=a_j\jn a_k=(r_1,\ldots,r_{u-1}, r_u r_{u+1}, r_{u+2},\ldots, r_n)
$$
so that $a_j,a_k\lt \psi$.  Finally, since $\psi$ is formed by merging two elements in the same block $B$ of $\om$, we have $\psi\le \om$, verifying (R2)) for $d=1$.

 Now let $d$ be arbitrary and  consider an atom $A=(B_1,B_2,\ldots, B_{n/d})$ where, by Lemma~\ref{OmGen-d} (b), 
 $\#B_i=d$ for all $i$.
Write the elements of each $B_i$ in decreasing order and let $B_i\le_l B_j$ if $B_i$ is less than or equal to $B_j$ lexicographically.  Finally, let $A\le_l A'$ if the first blocks in which they differ are $B$ and $B'$, respectively, where $B\le_l B'$.

Suppose that $A_i,A_k\le\om$ where $i<k$ and
\begin{align*}
A_i &= (P_1,P_2,\ldots,P_{n/d}),\\
A_k &= (R_1,R_2,\ldots,R_{n/d}),
\end{align*}
Letting $s$ be the first index in which $A_i$ differs from $A_k$, we must have 
$P_s<_l R_s$.  Let $p\in  P_s$ and $r\in R_s$ be the largest elements in which they differ.  This  forces $p<r$.

Since $A_i,A_k<\om$, there must be a block $B\in\om$ with $P_s,R_s\sbe B$.  Let
$P_\ell,P_{\ell+1},\ldots,P_m$ and $R_\ell,R_{\ell+1},\ldots,R_m$ be the blocks of $A_i$ and $A_k$, respectively, which are subsets of $B$ where $\ell\le s\le m$.  Since $p\not\in R_s$ and we are comparing atoms lexicographically, there must be a $t>s$ with $p\in R_t$.  If there is a lexicographic descent in the sequence $R_s,R_{s+1},\ldots,R_t$ then we proceed as in the proof of the $d=1$ case above.

Otherwise, we have $R_s<_l R_{s+1}<_l \ldots<_l R_t$.  Letting $m_i=\max R_i$ for all $i$, this implies that $m_s<m_{s+1}<\ldots<m_t$.  Combining this with the fact that $p<r\in R_s$ gives us $p<m_s\le m_{t-1}$.  Now consider the sets
\begin{align*}
R_t' &= (R_t -\{p\})\cup\{m_{t-1}\},\\
R_{t-1}' &= (R_{t-1} -\{m_{t-1}\}) \cup \{p\}.
\end{align*}
Note that since $R_{t-1}'$ was constructed by removing the maximum of $R_{t-1}$ and replacing it with a smaller element, we must have $R_{t-1}'<_l R_{t-1}$.  Finally, let $A_j$ be the atom obtained from $A_k$ by replacing $R_{t-1}$ and $R_t$ by $R_{t-1}'$ and $R_t'$, respectively. From the inequality on the $(t-1)$st blocks, it follows that we must have 
$A_j <_l A_k$.  Also, the fact that
$R_t\cup R_{t-1} = R_t'\cup R_{t-1}'$ shows that $A_j < \om$.  Now, $A_j$ and $A_k$ only differ in two adjacent blocks so that $A_j,A_k\lt \psi$ for some $\psi$.
Finally, since $A_j,A_k<\om$ we are forced to have $\psi=A_j\jn A_k\le \om$, finishing the proof.
\eprf

One could hope that $\Om_{dm}^{(d)}$ has an even stronger lexicographic shelling property. 

\begin{question}
 Does $\Om_{dm}^{(d)}$ have an EL-labeling? 
\end{question}

\section{The action of the symmetric group}
\label{asg}

In this section, we will study the action of the symmetric group $\fS_n$ on  various homology groups, over the rationals, associated with $\Om_n^{(d)}$.  For more about the theory of symmetric group representations see the books of James~\cite{jam:rts},  James and Kerber~\cite{JK:rts}, Sagan~\cite{sag:sg}, or 
Serre~\cite{se:1977}.  For poset topology we refer to \cite{sta:gap} and \cite[Chapter 3]{sta:ec1}, and to Macdonald~\cite{Macd:1995}, Sagan~\cite{sag:sg}, or Stanley~\cite{sta:ec2} for symmetric functions. 

Let $P$ be a bounded poset. The {\em proper part} of $P$ is defined as 
$
\Pb := P-\{\zh,\oh\}.
$
Recall that the set of all chains in $\Pb$ forms a simplicial complex called the {\em order complex} of $P$ and denoted $\De(P)$.  We write the $i$th (reduced) homology group of $\De(P)$ over the rationals as $\Ht_i(P)$.  

Suppose that $P$ is Cohen-Macaulay. Then $P$ is graded, so suppose $\rk\oh=r$.  In this case, all its homology groups vanish except possibly in the top dimension, $r-2$, and $\Ht_{r-2}(P)$ is a vector space of dimension $|\mu(P)|$.   
Furthermore, any group of automorphisms of $P$ induces an action on $\Ht_{r-2}(P)$.

The action of the symmetric group $\fS_n$   is most conveniently described using its Frobenius characteristic and symmetric functions.  Writing $\ch$ for the Frobenius characteristic,  we set 
\begin{equation}
  \label{betaDef} 
  \beta_{dm}^{(d)} := \ch\tilde{H}_{m-2}(\Omega_{dm}^{(d)}).
\end{equation}
  We will also need the complete homogeneous functions $h_n$.  As usual, if $\la=(\la_1,\ldots,\la_k)$ is a partition,  we let 
  $
  h_\la = h_{\la_1}\cdots h_{\la_k}.
  $
  We extend this definition to compositions $\al=(\al_1,\ldots,\al_k)$ by letting 
  $
  h_\al = h_{\al_1}\cdots h_{\al_k}.
  $
  We use the same conventions for the elementary symmetric functions $e_\la$ and $e_\al$ as well as the Frobenius characteristics $\be_\la^{(d)}$ and $\be_\al^{(d)}$, where in the last two cases $\la$ and $\al$ must have all parts divisible by $d$.
  Note that, although we use parentheses for both partitions and compositions, context should make it clear which is meant. Finally, if $d$ is a positive integer and $\al=(\al_1,\ldots,\al_k)$ is a composition then we let
  $$
  d\al=(d\al_1,\ldots,d\al_k)
  $$
  and similarly for partitions $d\la$.

It will be convenient to have a notation for the set of compositions of a given integer of a given length.  We let
$$
\cC(n,k) = \{ \al\comp n \mid \ell(\al) = k\}.
$$

Consider the poset $\Om_{dm}^{(d)}$.  As already observed, 
the poset isomorphism~\eqref{eqn:iso-Omd-Boolean-md} shows that $\Om_{dm}^{(d)}$ is Cohen-Macaulay.
%
Now $\fS_{dm}$ is a group of automorphisms of $\Om_{dm}^{(d)}$ and so acts on its homology.  
Observe that the isomorphism~\eqref{eqn:iso-Omd-Boolean-md} 
  commutes with the action of $\mathfrak{S}_{dm}$. 
 The homeomorphism in~\eqref{eqn:barycen} is thus also group-equivariant,
 and hence we have  
 an $\mathfrak{S}_{dm}$-isomorphism of homology modules 
$
\tilde{H}(\cB_{dm}^{(d)})
\iso \tilde{H}(\Omega_{dm}^{(d)}).
$

Using well-known results about rank-selection in the Boolean lattice \cite{sta:gap},  described in more detail in Theorem~\ref{BooleanRs}, Proposition~\ref{SkewStairNew} below now follows. 
Our conventions for Ferrers diagrams of integer partitions and standard Young tableaux follow  \cite{Macd:1995, sag:sg, sta:ec2}, increasing left to right along rows, and increasing top to bottom down the columns. 
A rim hook (also called a border strip or a ribbon) \cite{Macd:1995, sta:ec2} is a skew shape  whose Ferrers diagram has the property that consecutive rows share exactly one column.  Given a rim hook $\rho$, we let $s_\rho$ be the corresponding Schur function. 
\begin{prop}
\label{SkewStairNew} 
Let $\rho_{d^m}$ be the rim hook consisting of $m$ rows of length $d$.
Then 
$$
\beta_{dm}^{(d)}=s_{\rho_{d^m}}.
$$
In particular, 
$$
\dim\tilde{H}_{m-2}(\Omega_{dm}^{(d)}) = (-1)^m \mathcal{E}_{dm}^{(d)}.
$$
\end{prop}
See \cite[Theorem 4.2]{EJ:resptn} for a more general result, of which this is the special case $\vec{c}=(d^m)$.
For all $d$, we know that intervals in $\Om_{dm}^{(d)}$ are reduced products by~\eqref{ZhOmRp}.  The following  fact about how the homology of a reduced product behaves under group actions was established 
in~\cite[Proposition 2.5, Proposition 2.6]{su:Jer93}.

\begin{prop}
[{\cite{su:Jer93}}]
\label{EqRp} Let $P_1$ and  $P_2$ be Cohen-Macaulay posets of ranks
$r_1$ and $r_2$, respectively, Let $G_i$ be a finite group of automorphisms of $P_i$ for $i=1,2$. Then  $P_1\dot{\times} P_2$
 is Cohen-Macaulay, and there is a $(G_1\times G_2)$-isomorphism

 \vs{10pt}

 \eqqed{
 \tilde{H}_{r_1-2}(P_1)\otimes \tilde{H}_{r_2-2}(P_2)\iso
 \tilde{H}_{r_1+r_2-3}(P_1\dot{\times} P_2). 
 }
\end{prop}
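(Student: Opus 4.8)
The plan is to prove Proposition~\ref{EqRp} by combining the known topological fact about reduced products (a homotopy/homology statement) with equivariance bookkeeping. The statement has two parts: first, that $P_1\dot\times P_2$ is Cohen-Macaulay; and second, that there is a $(G_1\times G_2)$-isomorphism on top homology. For the first part, I would invoke the fact that the reduced product $P_1\dot\times P_2$ has order complex $\Delta(\overline{P_1\dot\times P_2})$ homotopy equivalent to the join $\Delta(\overline{P_1})*\Delta(\overline{P_2})$. Indeed, removing the elements $(\hat0,q)$ and $(p,\hat0)$ from $P_1\times P_2$ and then taking the proper part, one sees a chain in $\overline{P_1\dot\times P_2}$ projects to a (possibly empty) chain in each $\overline{P_i}$, and the order complex decomposes as a join of the two factor order complexes. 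Since each $P_i$ is Cohen-Macaulay of rank $r_i$, the complex $\Delta(\overline{P_i})$ is homotopy equivalent to a wedge of spheres of dimension $r_i-2$ and is Cohen-Macaulay; the join of two Cohen-Macaulay complexes is Cohen-Macaulay (this is standard, e.g. by the Künneth-type formula for joins), of dimension $(r_1-2)+(r_2-2)+1 = r_1+r_2-3$. This gives both that $P_1\dot\times P_2$ is Cohen-Macaulay and that its top homology is $\tilde H_{r_1+r_2-3}$.

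For the homology isomorphism, the key tool is the reduced Künneth formula for joins: $\tilde H_{k}(\Delta * \Delta') \cong \bigoplus_{i+j=k-1} \tilde H_i(\Delta)\otimes \tilde H_j(\Delta')$ over a field. Applied with $k = r_1+r_2-3$, and using that each factor has homology concentrated in degree $r_i-2$, the only surviving term is $i=r_1-2$, $j=r_2-2$, yielding $\tilde H_{r_1+r_2-3}(\Delta(\overline{P_1})*\Delta(\overline{P_2}))\cong \tilde H_{r_1-2}(P_1)\otimes\tilde H_{r_2-2}(P_2)$. The point I must be careful about is that this isomorphism is \emph{natural} in each factor, hence $(G_1\times G_2)$-equivariant: the join of simplicial complexes is functorial, so a simplicial automorphism of $\Delta(\overline{P_i})$ (induced by $G_i$ acting on $P_i$, which fixes $\hat0$ and $\hat1$) induces an automorphism of the join commuting with the Künneth isomorphism. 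I would spell out that the $G_1\times G_2$ action on $P_1\dot\times P_2$ is the coordinatewise one — which is well-defined precisely because each $G_i$ preserves $\hat0_i$ and $\hat1_i$ — and that under the join decomposition this is exactly the join of the two actions.

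The main obstacle, or at least the point requiring the most care, is making the join decomposition $\Delta(\overline{P_1\dot\times P_2}) \simeq \Delta(\overline{P_1})*\Delta(\overline{P_2})$ both precise and equivariant. The subtlety is that it is \emph{not} a simplicial isomorphism on the nose — a chain in $P_1\dot\times P_2$ carries more information than the pair of its projections, since it records how the two chains are interleaved in rank. The clean route is to use the homotopy equivalence coming from the fact (essentially Quillen-style fiber arguments, or the explicit analysis already in~\cite{su:Jer93}) that $\Delta(\overline{P})$ for a bounded poset only depends, up to equivariant homotopy, on the open interval structure, and that the map $\overline{P_1\dot\times P_2}\to \overline{P_1}\times\overline{P_2}$ (wherever defined) induces the expected map on order complexes up to homotopy. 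Since the excerpt explicitly cites \cite[Proposition 2.5, Proposition 2.6]{su:Jer93} as having established exactly this, the cleanest presentation is to reduce to those two propositions: state that they give the $(G_1\times G_2)$-homotopy equivalence of order complexes realizing the reduced product as a join, and then the homology statement and the Cohen-Macaulay conclusion follow by the equivariant Künneth formula as above. I would therefore keep the proof short, citing \cite{su:Jer93} for the geometric input and supplying only the degree bookkeeping and the observation that all maps in sight are equivariant.

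\begin{proof}
Both conclusions follow from the standard fact that the reduced product realizes a join of order complexes, together with the equivariant K\"unneth formula.

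First, for a bounded poset $P$ write $\De(P)=\De(\Pb)$ for the order complex of its proper part. A chain in the proper part of $P_1\dot\times P_2$ consists of elements $(x,y)$ with $x\in P_1^-$, $y\in P_2^-$, and $(x,y)\neq(\oh_{P_1},\oh_{P_2})$; projecting to each coordinate sends such a chain to a chain (possibly a single point, possibly all the way up to $\oh$) in $\ol{P_i}$ together with $\oh_{P_i}$ allowed. As shown in~\cite[Proposition 2.5, Proposition 2.6]{su:Jer93}, this projection induces a homotopy equivalence
$$
\De(P_1\dot\times P_2)\ \simeq\ \De(P_1)\ast\De(P_2),
$$
and moreover this equivalence is equivariant for the coordinatewise action of $G_1\times G_2$. (The coordinatewise action is well defined because each $G_i$, being a group of automorphisms of $P_i$, fixes $\zh_{P_i}$ and $\oh_{P_i}$.)

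Since each $P_i$ is Cohen--Macaulay of rank $r_i$, the complex $\De(P_i)$ is Cohen--Macaulay of dimension $r_i-2$, with reduced homology concentrated in the top degree and $\dim\Ht_{r_i-2}(P_i)=|\mu(P_i)|$. The join of two Cohen--Macaulay complexes is Cohen--Macaulay, of dimension $(r_1-2)+(r_2-2)+1=r_1+r_2-3$; hence $P_1\dot\times P_2$ is Cohen--Macaulay with $\rk\oh=r_1+r_2-2$ and all reduced homology vanishing except in degree $r_1+r_2-3$.

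Finally, over $\bbQ$ the reduced K\"unneth formula for joins gives a natural isomorphism
$$
\Ht_k\bigl(\De(P_1)\ast\De(P_2)\bigr)\ \iso\ \bigoplus_{i+j=k-1}\Ht_i(P_1)\otimes\Ht_j(P_2).
$$
Taking $k=r_1+r_2-3$ and using that $\Ht_i(P_1)=0$ unless $i=r_1-2$ and $\Ht_j(P_2)=0$ unless $j=r_2-2$, the only surviving summand is $i=r_1-2$, $j=r_2-2$, so
$$
\Ht_{r_1+r_2-3}(P_1\dot\times P_2)\ \iso\ \Ht_{r_1-2}(P_1)\otimes\Ht_{r_2-2}(P_2).
$$
Naturality of the K\"unneth isomorphism together with functoriality of the join under simplicial maps shows this isomorphism intertwines the $G_1\times G_2$ actions, as claimed.
\end{proof}
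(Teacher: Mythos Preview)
The paper does not prove this proposition; it is quoted from \cite[Proposition~2.5, Proposition~2.6]{su:Jer93} and closed with a qed box immediately after the displayed isomorphism, so there is no in-paper argument to compare against. Your outline---realize the proper part of $P_1\dot\times P_2$ as (homotopy equivalent to) the join $\De(\Pb_1)\ast\De(\Pb_2)$, observe that a join of Cohen--Macaulay complexes is Cohen--Macaulay, and read off the top homology via the (natural, hence equivariant) K\"unneth formula for joins---is the standard way such statements are established and is correct in substance.

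Two small points. First, you have an off-by-one in the rank: an atom of $P_1\dot\times P_2$ is a pair of atoms $(a_1,a_2)$, and from there to $(\oh_1,\oh_2)$ one needs $(r_1-1)+(r_2-1)$ covers, so $\rk\oh = r_1+r_2-1$, not $r_1+r_2-2$. This does not affect your homology degree, which you state correctly as $r_1+r_2-3$. Second, be cautious about attributing the specific homotopy equivalence $\De(P_1\dot\times P_2)\simeq\De(\Pb_1)\ast\De(\Pb_2)$ to \cite[Propositions~2.5--2.6]{su:Jer93}: those propositions deliver the Cohen--Macaulayness and the equivariant tensor decomposition of homology directly, but the join formulation is more in the spirit of Walker's and Quillen's results on products of posets. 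Your argument is fine logically, but the citation would be cleaner if you either cited a source for the join equivalence itself or simply invoked \cite{su:Jer93} for the final conclusion as the paper does.
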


We will need the above result for the 
 next task in this section, which  is to write down the $\fS_{dm}$-module structure of the  
 Whitney homology modules of $\Om_{dm}^{(d)}$, defined below.

The Whitney homology $W\!H_i(P)$ of a poset $P$ was originally defined by Baclawski \cite{bac:Whit1975}. The following
 equivalent definition of Whitney homology, due to Anders Bj\"orner, was shown to be  useful for determining group actions on poset homology by Sundaram~\cite{su:AIM1994}. 
Let $P$ be a ranked poset with least element $\hat 0$, and let $r$ be the length of a longest chain in $P$.  The $i$th Whitney homology of $P$ is related to the usual order homology of $P$ by isomorphisms establishing that, for $0\le i\le r$,
$$
 W\!H_i(P)\iso\bigoplus_{\rk(x) = i} \tilde{H}_{i-2}(\hat 0, x). 
 $$
In particular $W\!H_0(P)$ is the trivial module and
$
W\!H_r(P)\iso\tilde{H}_{r-2}(P).
$
These isomorphisms can be shown to commute with any group of  automorphisms of $P$ \cite{su:AIM1994}. 
  Because of this, we will replace $\iso$ with $=$ when dealing with Whitney homology and the corresponding simplicial homology groups.
The following acyclicity property of Whitney homology  leads to an expression for  $W\!H_r(P)$ as an alternating sum of $G$-modules.

\begin{prop}[{\cite[Lemma 1.1]{su:AIM1994}}]
\label{acyc-Whit}  Let $P$ be a bounded poset, and let $G$ be a group of automorphisms of $P$.  Assume that the Whitney homology is free in all degrees. Then each Whitney homology module is a $G$-module, and  as a virtual sum of $G$-modules one has 
$$
W\!H_r(P)-W\!H_{r-1}(P)+\cdots+(-1)^r W\!H_0(P)=0,
$$
where $r$ is the length of a longest chain in $P$.
\end{prop}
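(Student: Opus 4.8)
The plan is to reduce the claimed identity of virtual $G$-modules to the vanishing of a single sum of M\"obius values on a fixed-point subposet. Since the assertion is an identity of \emph{virtual} $G$-modules, by character theory it is equivalent to the scalar identities
$$
\sum_{i=0}^{r}(-1)^{i}\,\tr\bigl(g,W\!H_i(P)\bigr)=0 \qquad\text{for every }g\in G
$$
(the overall sign is immaterial), so I would fix $g\in G$ and pass to the fixed subposet $P^g=\{x\in P\mid gx=x\}$, which is again bounded with the same $\zh$ and $\oh$.

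First I would compute $\tr(g,W\!H_i(P))$ using Bj\"orner's description $W\!H_i(P)=\bigoplus_{\rk x=i}\Ht_{i-2}(\zh,x)$. The automorphism $g$ permutes these summands according to its action on the rank-$i$ elements, so only the summands indexed by $x$ with $gx=x$ contribute to the trace; for such an $x$, the restriction of $g$ is an automorphism of the interval $[\zh,x]$, hence acts on $\Ht_{i-2}(\zh,x)$. Thus $\tr(g,W\!H_i(P))=\sum_{\rk x=i,\ gx=x}\tr\bigl(g,\Ht_{\rk x-2}(\zh,x)\bigr)$.

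The crucial step is to evaluate $\tr\bigl(g,\Ht_{\rk x-2}(\zh,x)\bigr)$ for a $g$-fixed $x$. Here the freeness hypothesis enters: it forces the order homology of each open interval $(\zh,x)$ to be concentrated in its top degree $\rk x-2$ (this is automatic in the situations of interest, where the intervals are Cohen--Macaulay). Granting this, the Hopf--Lefschetz trace formula applied to the order complex $\De((\zh,x))$ collapses to its single nonzero homology group; and since an order-preserving automorphism stabilizing a chain must fix it pointwise, the $g$-fixed faces of $\De((\zh,x))$ are exactly the chains of the open interval $(\zh,x)$ computed inside $P^g$. Combining these facts with Philip Hall's theorem gives
$$
\tr\bigl(g,\Ht_{\rk x-2}(\zh,x)\bigr)=(-1)^{\rk x-2}\,\widetilde{\chi}\bigl(\De((\zh,x)_{P^g})\bigr)=(-1)^{\rk x}\,\mu_{P^g}(\zh,x),
$$
where $\widetilde{\chi}$ is the reduced Euler characteristic.

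Substituting back, the factors $(-1)^{\rk x}$ cancel the signs in the alternating sum; after adding the $i=0$ contribution $\tr(g,W\!H_0(P))=1=\mu_{P^g}(\zh,\zh)$, the whole expression telescopes to $\sum_{x\in P^g}\mu_{P^g}(\zh,x)$, which is $0$ by the defining recursion~\eqref{muUp} for the M\"obius function of the bounded poset $P^g$ (using $\zh\neq\oh$; the statement is vacuous otherwise). I expect the one genuinely delicate point --- the main obstacle --- to be the penultimate step: one must be sure that $\Ht_m(\zh,x)=0$ for all $m<\rk x-2$, so that the trace formula for $\De((\zh,x))$ really does reduce to the top homology. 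That is precisely what the freeness (equivalently, Cohen--Macaulayness of the lower intervals) hypothesis supplies. As an alternative that sidesteps some of this bookkeeping, one can argue geometrically: $\De(P\setminus\{\zh\})$ is a cone on $\oh$, hence $G$-equivariantly contractible, and filtering its reduced chain complex by the rank of the largest element of a chain produces associated graded pieces that are (shifted) chain complexes of the $\De((\zh,x))$; additivity of the equivariant Euler characteristic over this filtration then yields the identity directly.
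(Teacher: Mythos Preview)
The paper does not supply a proof; the proposition is quoted from \cite[Lemma~1.1]{su:AIM1994} and used as a black box. Your approach---reduce the virtual-module identity to a trace identity for each $g\in G$, pass to the fixed subposet $P^g$, apply the Hopf trace formula to each interval $(\zh,x)$, and collapse to the M\"obius recursion $\sum_{x\in P^g}\mu_{P^g}(\zh,x)=0$---is the argument in Sundaram's original paper, so your proposal matches the source.

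One point deserves sharpening. You write that freeness of the Whitney homology is ``equivalently, Cohen--Macaulayness of the lower intervals.'' It is not: over $\bbQ$ (the coefficient field used throughout this paper) every module is free, so freeness is a vacuous hypothesis and certainly does not by itself force $\Ht_m(\zh,x)=0$ for $m<\rk x-2$. What your Hopf--Lefschetz step actually requires is that each lower interval have homology concentrated in its top degree, i.e., the Cohen--Macaulay condition on lower intervals. In Sundaram's original context, and in every application in this paper (the posets $\Om_n^{(d)}$ are Cohen--Macaulay by Theorem~\ref{Om^d-RAO}), this does hold, so your argument goes through; but be aware that the hypothesis as literally stated in the proposition is weaker than the one you actually invoke. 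Your filtration alternative has the same issue: the Euler-characteristic identity it yields involves \emph{all} the homology groups $\Ht_m(\zh,x)$ of each interval, and agrees with $\sum_i(-1)^i[W\!H_i(P)]$ (under Bj\"orner's definition, the one the paper adopts) only after the same concentration assumption collapses the lower homology.
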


It is often useful to examine the Whitney homology of the dual poset. For example, as noted in Lemma~\ref{OmGen-d} (d), in the case of  $\Om_{dm}^{(d)}$, the intervals $[\om,\oh]$  have a simple description as Boolean lattices. 
As in \cite{su:AIM1994}, we define the dual Whitney homology $W\!H^*(P)$ of a graded poset $P$ of rank $r$  to be the Whitney homology of the dual poset $P^*$, so that, for $0\le i\le r$, 
$$
W\!H^*_i(P)=\bigoplus_{\crk(x) = i} \tilde{H}_{i-2}(x,\hat 1).
$$
Again we have that $W\!H^*_0(P)$ is the trivial module and
$  W\!H^*_r(P)\iso\tilde{H}_{r-2}(P). $

Now assume  $P$ is a Cohen-Macaulay poset   of rank $r$.  Then for every open interval $(x,y)$ in $P$, nonvanishing homology can occur only in the top dimension. 
We will often simply write  $\tilde{H}(x,y)$ for that homology group.
Proposition~\ref{acyc-Whit} gives the following two formulas for the  homology $\tilde{H}_{r-2}(P)$.  
\begin{align}
\label{ac-WH-top}
\tilde{H}_{r-2}(P) &= W\!H_{r-1}(P)-W\!H_{r-2}(P)+\cdots+(-1)^{r-1} W\!H_0(P).\\
\label{ac-WH*-top}
\tilde{H}_{r-2}(P) &= W\!H^*_{r-1}(P)-W\!H^*_{r-2}(P)+\cdots+(-1)^{r-1} W\!H^*_0(P).
\end{align}
\begin{thm}\label{OmdWhit}  
\phantom{Consider $\Omega_{dm}^{(d)}$. }
\begin{enumerate}

\item[(a)] The dual Whitney homology of $\Omega_{dm}^{(d)}$ is given, for 
$0\le k\le m-1$, by 
$$
\ch W\!H^*_{k}(\Omega_{dm}^{(d)})
=\sum_{\al\in\cC(m,k+1)} 
h_{d\al},
$$
and in the top dimension by 
$
\ch W\!H^*_{m}(\Omega_{dm}^{(d)})
=\beta_{dm}^{(d)}.
$
In particular, it is a permutation module except in the top dimension.
\item[(b)]
The Whitney homology of $\Omega_{dm}^{(d)}$ is given, for $0\le k\le m-1$, by 
$$
\ch W\!H_{m-k}(\Omega_{dm}^{(d)})
= \sum_{\al\in\cC(m,k+1)} \beta_{d\alpha}^{(d)},
$$
and in the bottom dimension by
$
\ch W\!H_{0}(\Omega_{dm}^{(d)})= h_{dm}. 
$
\end{enumerate}
\end{thm}
\begin{proof}  
(a) 
Note first that, since  $W\!H^*_{0}(P)$ is always the trivial module,
$$
\ch W\!H^*_{0}(\Omega_{dm}^{(d)})=h_{dm}
$$
which agrees with the sum at $k=0$.  So assume $k\ge1$ and
 consider an upper interval $[\om, \hat 1]$ in $\Omega_{dm}^{(d)}$, where $\om=(B_1, \ldots, B_{k+1})$. Then $\om$ has type $d\alpha$ where $\#B_i=d\alpha_i$ for all $i$, and thus $\alpha\vDash m$.
By Lemma~\ref{OmGen-d} (d) we have  $[\om, \hat 1]\iso \cB_k$, so its homology is one-dimensional.  The stabilizer subgroup of $\om$  is the Young subgroup 
\begin{equation}
    \label{YngSub}
G_\om:=\times_{i=1}^{k+1} \mathfrak{S}_{B_i},
\end{equation}
where $\mathfrak{S}_{B_i}$ is the group of permutations on the elements of the block $B_i$. 
One sees that $G_\om$ acts trivially on the homology, because the group fixes all ordered partitions in $[\om, \hat 1]$.  The orbit of such an $\om$ under the $\mathfrak{S}_{dm}$-action consists of all ordered partitions $\psi$ of  type $d\alpha$.  This is a transitive action, and therefore we have 
$$
\bigoplus_{\type\psi=d\alpha} \tilde{H}_{k-2}(\psi, \hat 1) 
=1 \uparrow_{G_\om}^{\mathfrak{S}_{dm}},
$$
which has Frobenius characteristic $\prod_{i=1}^{k+1} h_{d\alpha_i}$.
It follows that, for $1\le k\le m-1,$ 
$$
\ch W\!H^*_{k}(\Omega_{dm}^{(d)})=\sum_{\rk\psi=m-k} \ch \tilde{H}_{k-2}(\psi, \hat 1)
=\sum_{\al\in\cC(m,k+1)} h_{d\alpha},
$$
since elements at corank $k$ have $k+1$ blocks.

\medskip

(b) The formula for dimension $0$ is immediate from the fact  that $W\!H_{0}(\Omega_{dm}^{(d)})$ is the trivial module.

For the other dimensions, we need to examine the lower intervals  $[\hat 0, \om]$. Again, suppose $\om=(B_1, \ldots, B_k)$ with composition type $d\alpha=(d\alpha_1, \ldots , d\alpha_k)$ where $\#B_i=d\alpha_i$ and thus $\alpha\vDash m$. 
By the isomorphism~\eqref{ZhOmRp} we have that 
$$
[\zh,\om] \iso 
\Om_{d\al_1}^{(d)} \dot{\times}\cdots\dot{\times}\Om_{d\al_k}^{(d)}.
$$
Also, from equation~\eqref{YngSub}, we have that $G_\om=\times_{i=1}^k \mathfrak{S}_{B_i}$ is the stabilizer of $\om$, and hence of $[\zh,\om]$.
 By definition~\eqref{betaDef}, each $ \mathfrak{S}_{B_i}$ acts on the homology of the corresponding component $\Omega_{d\alpha_i}^{(d)}$ of the reduced product like the  representation whose Frobenius characteristic is $\beta_{d\alpha_i}^{(d)}$. 

Now we invoke Proposition~\ref{EqRp}. 
It follows, by collecting orbits as in (a), that the action of $\mathfrak{S}_{dm}$ on the orbit of $\om$ is the induced module 
$\otimes_{i=1}^k \tilde{H}(\Omega_{d\alpha_i}^{(d)} )\uparrow_{G_\omega}^{\mathfrak{S}_{dm}}$, 
and hence its Frobenius characteristic is $\beta_{d\alpha}^{(d)}$.  
Since a partition with $k$ blocks has rank $m-k+1$,
we have shown that 
$$
\ch W\!H_{m-k+1}(\Omega_{dm}^{(d)})= 
\sum_{\al\in\cC(m,k)} \beta_{d\alpha}^{(d)},
$$
and reindexing gives the desired result.
\end{proof}

Using ~\eqref{ac-WH-top} and ~\eqref{ac-WH*-top}, 
we can now conclude 
that the  top homology of $\Omega_{dm}^{(d)}$ satisfies the following identities.
\begin{equation}
\label{OmdWH*} 
\beta_{dm}^{(d)}
=\sum_{k=1}^m (-1)^{m-k} 
\sum_{\al\in\cC(m,k)} h_{d\al}.
\end{equation}
\begin{equation}\label{dimOmdWH*}
    \dim \tilde{H}_{m-2} (\Omega_{dm}^{(d)}) = (-1)^m \cE_{dm}^{(d)}
= \sum_{\omega\in \Omega_{dm}^{(d)} }(-1)^{m-\ell(\omega)}.
\end{equation}
\begin{equation}
\label{OmdWH} 
\beta_{dm}^{(d)} 
= (-1)^{m+1} h_{dm} +
\sum_{k=2}^m (-1)^{k} 
\sum_{\al\in\cC(m,k)} \beta_{d\alpha}^{(d)}.
\end{equation}

The equation~\eqref{OmdWH*} is merely the well-known expansion of the Schur function corresponding to the rim hook $\rho_{d^m}$ in the basis of homogeneous symmetric functions, and can be obtained in a variety of ways, for example by the Jacobi-Trudi identity.  See also Gessel \cite{ges:P-ptns}. 
Here we have obtained it as a consequence of Proposition~\ref{acyc-Whit}.


We also note that in the special case $d=1$,
as $\mathfrak{S}_n$-modules, for $0\le k\le n$ the Whitney homology and the dual Whitney homology are related via the sign representation $\sgn_n$ of $\mathfrak{S}_n$ by the equation
\[ W\!H_{n-k}(\Om_n)=\sgn_n \otimes W\!H^*_k(\Om_n).\]


Next we describe the ordinary generating function for the $\be_{dm}^{(d)}$.
Recall the exponential generating function used to define the
$\cE_{dm}^{(d)}$ in~\eqref{cEndDef}.
One can see that it is precisely the exponential specialization in the sense of Stanley~\cite[Proposition 7.8.4]{sta:ec2} of the generating function below.   

\begin{cor}
\label{OmdGfs} 
Fix $d\ge1$. Then we have the generating function
\begin{equation}\label{OmdGf}
\sum_{m\ge 0} (-1)^m \beta_{dm}^{(d)} x^{dm}
= \frac{1}{1+h_d x^d + h_{2d} x^{2d} + \cdots}.
\end{equation}
In particular, the symmetric functions 
$\{\beta_{dm}^{(d)} \mid m\ge 1\}$ form an algebraically independent set of generators for the ring of symmetric functions generated by 
$\{h_{dm} \mid  m\ge 1\}$.
\end{cor}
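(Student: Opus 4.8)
The plan is to read off the generating function \eqref{OmdGf} directly from the closed form \eqref{OmdWH*} in Theorem~\ref{OmdWhit}(c), and then to extract the statement about generators from the identity itself. Working in the ring of formal power series $\Sym[[x]]$, set $H(x)=\sum_{j\ge 1}h_{dj}x^{dj}$ and $B(x)=\sum_{m\ge 1}(-1)^m\beta_{dm}^{(d)}x^{dm}$. Multiplying \eqref{OmdWH*} through by $(-1)^m$ and using $(-1)^m(-1)^{m-k}=(-1)^k$ rewrites it as $(-1)^m\beta_{dm}^{(d)}=\sum_{k=1}^m(-1)^k\sum_{\al\in\cC(m,k)}h_{d\al}$. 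The only bookkeeping step is the observation that the sum of $h_{d\al}\,x^{dm}=\prod_i h_{d\al_i}x^{d\al_i}$ over all $m\ge k$ and all $\al\in\cC(m,k)$ equals $H(x)^k$. Hence $B(x)=\sum_{k\ge 1}(-1)^kH(x)^k=-H(x)/(1+H(x))$, so $1+B(x)=1/(1+H(x))$, which is \eqref{OmdGf} once one sets $\beta_0^{(d)}=1$ for the $m=0$ term (consistent with $\Om_0^{(d)}$ being a one-point poset). One could instead multiply the recurrence \eqref{OmdWH} by $x^{dm}$ and sum, but starting from \eqref{OmdWH*} is the least sign-prone route.

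For the second assertion, rewrite \eqref{OmdGf} as $\bigl(1+\sum_{j\ge 1}h_{dj}x^{dj}\bigr)\bigl(\sum_{m\ge 0}(-1)^m\beta_{dm}^{(d)}x^{dm}\bigr)=1$ and compare coefficients of $x^{dm}$: for each $m\ge 1$ this produces a relation presenting $\beta_{dm}^{(d)}$ as $(-1)^{m+1}h_{dm}$ plus a polynomial in the $h_{dj}$ with $j<m$ (which \eqref{OmdWH*} already does). A straightforward induction inverts these relations, expressing each $h_{dm}$ as a polynomial in $\beta_d^{(d)},\dots,\beta_{dm}^{(d)}$; consequently, for every $M$, $\bbQ[h_{dj}\mid j\le M]=\bbQ[\beta_{dj}^{(d)}\mid j\le M]$, and taking the union over $M$ shows the two infinite families generate the same subring of $\Sym$. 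Since $h_1,h_2,\dots$ are algebraically independent, so is the subfamily $\{h_{dm}\mid m\ge 1\}$; hence $\bbQ[h_{dj}\mid j\le M]$ has transcendence degree $M$, and as it is generated by the $M$ elements $\beta_d^{(d)},\dots,\beta_{dM}^{(d)}$, those elements are algebraically independent, for every $M$.

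I do not expect a genuine obstacle: the core is the geometric-series identity, and the generator statement is a triangularity argument. The only points requiring care are the sign bookkeeping in passing from \eqref{OmdWH*} to $B(x)$, the degenerate term $m=0$ (handled by the convention $\beta_0^{(d)}=1$), and phrasing the triangularity precisely enough that algebraic independence follows — for which it suffices that in degree $dm$ the function $\beta_{dm}^{(d)}$ equals $\pm h_{dm}$ modulo the subalgebra generated by the lower $h_{dj}$. As a consistency check, applying the exponential specialization $h_n\mapsto x^n/n!$ to \eqref{OmdGf} recovers the defining series \eqref{cEndDef} for the $\cE_n^{(d)}$, matching the remark made just before the statement.
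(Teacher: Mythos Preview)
Your proof is correct and follows essentially the same route as the paper: both arguments combine the closed form~\eqref{OmdWH*} with the geometric-series expansion of $1/(1+H(x))$, differing only in direction (you sum the known $\beta$'s to recognize the series, the paper expands the series and matches against~\eqref{OmdWH*}). Your triangularity argument for the algebraic-independence claim is a welcome addition, since the paper leaves that ``In particular'' unproved.
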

\bprf
For any formal power series $1+c_1 x + c_2 x^2 + \cdots$ we have
\begin{align*}
\frac{1}{ 1+c_1 x + c_2 x^2 + \cdots}
 &=\frac{1}{ 1-(-c_1 x -c_2 x^2 -\cdots)}\\
 &=\sum_{k\ge0} (-c_1 x -c_2 x^2 -\cdots)^k\\
 &=\sum_{k\ge0} (-1)^k \sum_{m\ge0} x^m 
 \sum_{\al\in\cC(m,k)} c_{\al_1}\cdots c_{\al_k},
\end{align*}
where the last equality comes from the fact that a term involving $x^m$ in the expansion of $(c_1 x +c_2 x^2 +\cdots)^k$ comes picking the term 
$c_{\al_i}x^{\al_i}$ from the $i$th factor for $1\le i\le k$.
We now get the desired generating function by substituting $x^d$ for $x$,
$c_i=h_{di}$ for $i\ge1$, and using equation~\eqref{OmdWH*}.
\eprf

There is a striking resemblance between the $\mathfrak{S}_{dm}$-action on the top homology of $\Omega_{dm}^{(d)}$ and the $\mathfrak{S}_{dm-1}$-action on the top homology of the (unordered) $d$-divisible partition lattice $\Pi_{dm}^d$. In particular, compare~\eqref{OmdGf} above with~\eqref{dDivRes} below.     The next result was derived in~\cite[Example 1.6(ii), Proposition 5.2]{su:AIM1994}. Equation~\eqref{dDivRes} is originally due to \cite{chr:PLMS1986}, implicitly and in a different form.
Let $\pi_m^d$ denote the Frobenius characteristic of the $\mathfrak{S}_{dm}$-action on the top homology of the $d$-divisible lattice $\Pi_{dm}^d$.

\begin{prop}[{[\cite{chr:PLMS1986}, \cite{su:AIM1994}]}] 
We have the plethystic identity
$$
\sum_{m\ge 0} (-1)^m \pi_m^d 
=\sum_{i\ge 1} (-1)^i \pi_i \left[\sum_{j\ge 1}h_{dj}\right].
$$
We also have the generating function
\begin{equation}
\label{dDivRes} 
\sum_{m\ge 0} (-1)^m  (\pi_m^d)\downarrow_{\mathfrak{S}_{dm-1}} =\frac{\sum_{j\ge 1}h_{dj-1}}{\sum_{j\ge 0} h_{dj}},
\end{equation}
where  $(\pi_m^d)\downarrow_{\mathfrak{S}_{dm-1}}$ denotes restriction of the representation $\pi_m^d$ to $\fS_{dm-1}$.
Moreover, the $\mathfrak{S}_{dm-1}$-representation $(\pi_m^d)\!\downarrow_{\mathfrak{S}_{dm-1}}$ is the skew Schur function indexed by the rim hook whose top row has length $d-1$, and whose remaining $m-1$ rows have length $d$.  
\hqed
\end{prop}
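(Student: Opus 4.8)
The plan is to re-derive this result --- which is essentially due to Sundaram~\cite{su:AIM1994}, with the generating function~\eqref{dDivRes} implicit in~\cite{chr:PLMS1986} --- by running the Whitney-homology machinery of this section for the $d$-divisible partition lattice $\Pi_{dm}^d$ in place of $\Om_{dm}^{(d)}$. The structural input, parallel to isomorphism~\eqref{ZhOmRp} and Theorem~\ref{OmGen-d}(d), is this: if $\pi\in\Pi_{dm}^d$ has block sizes $d\la_1,\dots,d\la_k$ (so $\la\vdash m$), then the lower interval is a reduced product $[\zh,\pi]\iso\Pi_{d\la_1}^d\dot{\times}\cdots\dot{\times}\Pi_{d\la_k}^d$, while the upper interval $[\pi,\oh]$ is the \emph{ordinary} partition lattice $\Pi_k$; one checks as usual that $\Pi_{dm}^d$ is Cohen--Macaulay. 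It is the appearance of a partition lattice $\Pi_k$ --- rather than the Boolean algebra $\cB_k$ that occurs for $\Om_{dm}^{(d)}$ --- that forces a plethysm with the $\pi_i=\ch\,\tilde{H}(\Pi_i)$ in place of a plain sum over compositions.

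For the first (plethystic) identity I would apply Whitney-homology acyclicity, Proposition~\ref{acyc-Whit}, to $\Pi_{dm}^d$, writing the top homology $\pi_m^d=\ch\,\tilde{H}(\Pi_{dm}^d)$ as the alternating sum of the lower Whitney homologies $\ch\,W\!H_j(\Pi_{dm}^d)$. Decomposing each $W\!H_j$ into $\fS_{dm}$-orbits of rank-$j$ elements, invoking Proposition~\ref{EqRp}, and inducting on $m$, the homology of the lower interval below a $\pi$ of block type $d\la$ has Frobenius characteristic $\prod_i\pi_{\la_i}^d$; inducing from the stabilizer $\prod_l\fS_{dl}\wr\fS_{a_l}$ (with $a_l$ the number of parts of $\la$ equal to $l$) turns this into $\prod_l h_{a_l}[\pi_l^d]$. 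Summing over all $m$ and all types, the signs and the unordered combining of blocks assemble --- just as in the classical Whitney-homology computation for $\Pi_n$ --- into the plethystic substitution of $\sum_{j\ge1}h_{dj}$, the characteristic of a single $d$-divisible block carrying the trivial action, into the series $\sum_i(-1)^i\pi_i$. This establishes the first identity (up to the overall sign and degree-$0$ normalization, which must be matched to the conventions of~\cite{su:AIM1994}).

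The restriction formula then follows formally. Use the standard fact that $\ch(M\downarrow_{\fS_{n-1}})=\partial(\ch M)/\partial p_1$ (the adjoint of multiplication by $h_1=p_1$). Applying $\partial/\partial p_1$ to the plethystic identity, with the chain rule $\partial(f[g])/\partial p_1=(\partial f/\partial p_1)[g]\cdot\partial g/\partial p_1$ and $\partial h_{dj}/\partial p_1=h_{dj-1}$, produces the factor $\sum_{j\ge1}h_{dj-1}$ --- the numerator of~\eqref{dDivRes}. For the other factor, invoke the classical theorem that the top homology of $\Pi_i$ restricts to $\fS_{i-1}$ as the \emph{regular representation}, equivalently $\partial\pi_i/\partial p_1=h_1^{\,i-1}$; feeding this in, the sum $\sum_i(-1)^i(\partial\pi_i/\partial p_1)[\sum_{j\ge1}h_{dj}]$ collapses to a geometric series in $p_1[\sum_j h_{dj}]=\sum_j h_{dj}$, summing to $\pm(\sum_{j\ge0}h_{dj})^{-1}$, which yields~\eqref{dDivRes}. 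Finally the rim-hook description follows from~\eqref{dDivRes} by exactly the argument deducing Theorem~\ref{SkewStair} from Corollary~\ref{OmdGfs}: expand $(\sum_{j\ge0}h_{dj})^{-1}$ as an alternating sum over compositions, recognize the resulting signed sum of terms $h_{dj-1}h_{d\al}$ as the Jacobi--Trudi expansion of the skew Schur function of the rim hook with top row of length $d-1$ and the remaining $m-1$ rows of length $d$, and compare dimensions via standard Young tableaux.

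The main obstacle is the first step: correctly assembling the equivariant Whitney-homology decomposition of $\Pi_{dm}^d$ into the plethystic generating function, and in particular pinning down every sign --- the conventions for homology of rank-$0$ and otherwise degenerate intervals, the degree shifts in Proposition~\ref{EqRp}, and the precise normalization of $\pi_i$ for small $i$ --- so that ``choose an unordered family of $d$-divisible blocks and merge them'' is faithfully recorded by plethysm into $\sum_i(-1)^i\pi_i$ rather than a sign-twisted variant. Once the first identity is in hand with the right normalization, the restriction formula is a one-line application of $\partial/\partial p_1$ together with the regular-representation fact, and the rim-hook statement is a verbatim repetition of the proof of Theorem~\ref{SkewStair}.
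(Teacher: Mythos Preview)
The paper does not prove this proposition: it is quoted from \cite{chr:PLMS1986} and \cite{su:AIM1994} and closed with \hqed.  So there is no paper proof to compare against, and the question is simply whether your outline is a correct reconstruction of the argument.

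Your overall strategy is sound and is indeed the approach of \cite{su:AIM1994}, but there is a genuine imprecision at the heart of the first step.  Running the \emph{lower} Whitney homology on $\Pi_{dm}^d$, as you propose, produces---after inducing from the stabilizer $\prod_l \fS_{dl}\wr\fS_{a_l}$---terms of the form $\prod_l h_{a_l}[\pi_l^d]$, i.e.\ a recursion among the $d$-divisible homology characters $\pi_l^d$ themselves.  It does \emph{not} directly produce terms of the shape $\pi_i[\sum_j h_{dj}]$; the ordinary partition-lattice characters $\pi_i$ do not appear in the lower intervals of $\Pi_{dm}^d$ at all.  Your sentence ``the signs and the unordered combining of blocks assemble \ldots\ into the plethystic substitution of $\sum_{j\ge1}h_{dj}$ into $\sum_i(-1)^i\pi_i$'' conflates two different things.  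There are two honest ways to close this gap.  One is to stay with lower Whitney: acyclicity then says that $H:=\sum_n h_n$ and $\sum_m(\pm)\pi_m^d$ are related by $H[\,\sum_m(\pm)\pi_m^d\,]=\sum_{j\ge0}(\pm)h_{dj}$, and now you \emph{invoke} the classical $\Pi_n$ computation, which identifies $\sum_i(\pm)\pi_i$ as the plethystic inverse of $H$, to solve for $\sum_m(\pm)\pi_m^d$.  The other---and this is closer to how the identity is usually stated---is to run the \emph{dual} Whitney homology instead: the upper intervals $[\pi,\hat1]\cong\Pi_k$ put the ordinary $\pi_k$ directly into the picture, and the wreath-product induction then packages the orbits as $\pi_k[\sum_j h_{dj}]$ without any further inversion.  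Either route works, but you should say which one you mean.

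Your derivation of~\eqref{dDivRes} from the plethystic identity via $\partial/\partial p_1$, the chain rule, and the regular-representation fact $\partial\pi_i/\partial p_1=h_1^{\,i-1}$ is correct and efficient; and your final remark that the rim-hook description follows from~\eqref{dDivRes} by the Jacobi--Trudi argument of Theorem~\ref{SkewStair} is also correct.
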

An examination of  the $\mathfrak{S}_{dm}$-action on the maximal chains leads to  the following.
\begin{prop}
\label{MaxChains} The action of $\mathfrak{S}_{dm}$ on the maximal chains of $\Om_{dm}^{(d)}$ has Frobenius characteristic 
\[(m-1)!\, h_d^m.\]
In particular, the number of maximal chains is
$$
(m-1)!\, \binom{dm}{d, \ldots,d}.
$$
\end{prop}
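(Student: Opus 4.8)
The plan is to realize a maximal chain of $\Om_{dm}^{(d)}$ as the combined data of an atom together with a maximal chain in the Boolean interval above it, and then to check that $\mathfrak{S}_{dm}$ acts only on the atom factor. First I would note that, by Theorem~\ref{OmGen-d}, a maximal chain has the form $\zh\lt a=\om_1\lt\om_2\lt\cdots\lt\om_m=\oh$, where $a$ is an atom --- that is, an ordered set partition of $[dm]$ into $m$ blocks of size $d$ --- and $\om_1\lt\cdots\lt\om_m$ is a maximal chain of the interval $[a,\oh]$. By Theorem~\ref{OmGen-d}(d) we have $[a,\oh]\iso\cB_{m-1}$, and via the explicit anti-isomorphism $A$ from the proof of Theorem~\ref{OmGen}(d) --- with the $m-1$ commas of $a$ labelled $1,\dots,m-1$ from left to right --- a maximal chain of $[a,\oh]$ is the same thing as a maximal chain of $\cB_{m-1}$, i.e.\ a linear ordering of $\{1,\dots,m-1\}$. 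There are $(m-1)!$ of these, so the maximal chains of $\Om_{dm}^{(d)}$ are in bijection with pairs (atom, linear order on $[m-1]$).

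Next I would track the $\mathfrak{S}_{dm}$-action. Since $\sigma\in\mathfrak{S}_{dm}$ acts by $\sigma\cdot(B_1,\dots,B_k)=(\sigma B_1,\dots,\sigma B_k)$, it preserves the left-to-right order of the blocks; in particular it sends the $i$th comma of an atom $a$ to the $i$th comma of $\sigma a$, so that $\sigma\circ A_a=A_{\sigma a}$ as maps $\cB_{m-1}\to\Om_{dm}^{(d)}$. Therefore $\sigma$ carries the maximal chain of $[a,\oh]$ indexed by a linear order $w$ on $[m-1]$ to the maximal chain of $[\sigma a,\oh]$ indexed by the \emph{same} $w$. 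Under the bijection above this means $\mathfrak{S}_{dm}$ acts on the first coordinate exactly as on atoms and trivially on the second, so as $\mathfrak{S}_{dm}$-sets the maximal chains form $(m-1)!$ disjoint copies of the set of atoms, and hence the permutation module on maximal chains is $(m-1)!$ copies of the permutation module on atoms.

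It then remains to identify the permutation module on atoms. The action of $\mathfrak{S}_{dm}$ on ordered set partitions into $m$ blocks of size $d$ is transitive (two such can always be matched block by block), and the stabilizer of $a=(B_1,\dots,B_m)$ is the Young subgroup $\mathfrak{S}_{B_1}\times\cdots\times\mathfrak{S}_{B_m}\iso\mathfrak{S}_d^{\times m}$ --- there is no factor permuting blocks, because the blocks are ordered. Hence the atom module is $1\uparrow_{\mathfrak{S}_d^{\times m}}^{\mathfrak{S}_{dm}}$, with Frobenius characteristic $h_d^m$. Combining with the previous paragraph gives $\ch=(m-1)!\,h_d^m$, and taking dimensions via $[\mathfrak{S}_{dm}:\mathfrak{S}_d^{\times m}]=\binom{dm}{d,\dots,d}$ yields the stated number of maximal chains.

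The two bijections are routine; the point requiring care --- and the one I expect to be the (mild) crux --- is the equivariance in the second paragraph, namely that the identifications $[a,\oh]\iso\cB_{m-1}$ can be chosen compatibly as $a$ ranges over its orbit, so that the product decomposition of the maximal-chain set is genuinely $\mathfrak{S}_{dm}$-equivariant and not merely numerical. This reduces to the position-preserving nature of the action together with the comma-label description of the anti-isomorphism $A$.
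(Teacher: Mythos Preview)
Your proof is correct and follows essentially the same approach as the paper: both identify a maximal chain as an atom together with a maximal chain in $[a,\oh]\iso\cB_{m-1}$, observe that the $\mathfrak{S}_{dm}$-action fixes the $\cB_{m-1}$ coordinate (because blocks are ordered), and conclude that the module is $(m-1)!$ copies of $1\uparrow_{\mathfrak{S}_d^{\times m}}^{\mathfrak{S}_{dm}}$. If anything, your treatment of the equivariance via the comma-labelled anti-isomorphism $A$ is more explicit than the paper's one-line justification.
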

\begin{proof} Let $\omega=(B_1,\ldots,B_m)$ be an atom in $\Om_{dm}^{(d)}$.  Since   $\omega$ has $m$ ordered blocks each of size $d$, its stabilizer is the Young subgroup 
$$
G_\om:=\mathfrak{S}_{B_1}\times\cdots\times\fS_{B_m}\iso \fS_d^m. 
$$
Also, because of the isomorphism in Lemma~\ref{OmGen-d} (d), there is a bijection between the maximal chains from $\om$ to $\oh$ and the maximal chains from $\zh$ to $\oh$ in the Boolean algebra $\cB_{m-1}$.  It follows that there are $(m-1)!$ such chains.

The action of $\mathfrak{S}_{dm}$ permutes the chains from an atom to $\oh$ in $\Om_{dm}^{(d)}$.  Also, because the blocks are ordered,
all chains corresponding to a given chain in $\cB_{m-1}$ are in one orbit of $\mathfrak{S}_{dm}$.
The vector space of maximal chains therefore decomposes into a direct sum of $(m-1)!$ orbits, and each orbit is isomorphic to the induced module 
$1\uparrow_{G_\om}^{\mathfrak{S}_{dm}}.$  
This immediately gives the desired Frobenius characteristic and chain count.
\end{proof}

We close this section with an important property of Whitney homology that we will need when we discuss the rank-selected homology of the barycentric subdivision of a Cohen-Macaulay poset. 
%
%
The following result is crucial to the proof of  Theorem~\ref {thm:sdP-rksel-recS} in the next section. 

\begin{thm}[{\cite[Proposition 1.9]{su:AIM1994}}]\label{thm:rksel-P}  Let $P$ be a Cohen-Macaulay poset of rank $n$ and let $G$ be a group of automorphisms of $P$.
Let $P(\underline{r})$ denote the subposet of $P$ consisting of the bottom $r$ ranks, i.e., the rank set $\{1,2,\ldots , r\}$, for $1\le r\le n-1$. 
Dually, 
 let $P(\bar r)$ denote the subposet of $P$ consisting of the top $r$ ranks, i.e., the corank set $\{1,2,\ldots , r\}$, for $r\le n-1$.
Then one has the following $G$-module isomorphisms in homology:
\begin{equation}\label{eqn:S-AIM-rnksel}\tilde{H}(P(\underline{r})) \bigoplus \tilde{H}(P(\underline{r-1}))
=W\!H_r(P)=\bigoplus_{\substack{x\in P\\ \rk x=r}} \tilde{H}(\hat 0, x).
\end{equation}
\begin{equation}\label{eqn:S-AIM-cornksel}\tilde{H}(P(\overline{r})) \bigoplus \tilde{H}(P(\overline{r-1}))
=W\!H_r^*(P)=\bigoplus_{\substack{x\in P\\ \crk x=r}} \tilde{H}(x, \hat 1).
\end{equation}
\end{thm}


\section{Rank-selection}
\label{rs}

Now we turn to rank-selection.  For any bounded and graded poset $P$ of rank $r$, 
the {\em trivial ranks} are rank $0$ and rank $r$.  The same terminology applies to coranks.
For any subset $S$ of the nontrivial  ranks $\{1, \ldots, r-1\}$,  we define the rank-selected subposet $P(S)$ to be the bounded and graded poset  
\[P(S)=\{x\in P:\rk(x)\in S\} \cup \{\hat 0, \hat 1\}.\]
We will sometimes use the same notation for a corank-selected subposet, but will make it clear when that is being done instead.
  It is known that rank-selection preserves the Cohen-Macaulay property \cite[Theorem 6.4]{bac:1980}.  For a subset of ranks $S\subseteq[m-1]$, write $\beta_{dm}^{(d)}(S)$ for the Frobenius characteristic of  
  $\tilde{H}(\Om_{dm}^{(d)}(S))$.  

Let $P$ be an arbitrary Cohen-Macaulay poset of rank $r$ and let  $S$ be a subset of the ranks $\{1,\ldots, r-1\}$.  Let $G$ be a group of automorphisms of $P$, and let $\alpha_P(S)$ and $\beta_P(S)$ denote, respectively, the $G$-modules arising from the action of $G$ on the maximal chains of $P(S)$ and on the top homology of $P(S)$. 
Then from \cite{sta:gap} one has, as virtual $G$-modules, 
\begin{equation}\label{alpha-beta}
\beta_P(S)=\sum_{T\subseteq S} (-1)^{|T|-|S|} \alpha_P(T),
\end{equation}
and, for the permutation action on the chains, 
\begin{equation}\label{alpha-from-beta}
\alpha_P(S)=\sum_{T\subseteq S}  \beta_P(T).
\end{equation}

We begin by applying the general rank-selection recursion Theorem~\ref{thm:rksel-P} to the rank-selected homology of barycentric subdivision.

Let $P^{\mathrm{sd}}$ denote the poset of chains in $P$, where $(\hat 0, \hat 1)$ is the empty chain.  We append an artificial top element to make $P^{\mathrm{sd}}$ bounded.   Then the order complex of $P^{\mathrm{sd}}$ is the barycentric subdivision of the order complex of $P$, and hence the two order complexes are homeomorphic. Thus $P^{\mathrm{sd}}$ inherits  the Cohen-Macaulay property from $P$, and any group of automorphisms $G$ of $P$ acts also on $P^{\mathrm{sd}}$.   Stanley \cite[\S 8]{sta:gap} gave an elegant formula for the action of $G$ on the  rank-selected homology  of $P^{\mathrm{sd}}$, in terms of  the action on $P$.  
The relevance of this result comes from the observation~\eqref{eqn:barycen} that 
  the order complex of $\Om_{dm}^{(d)}$ is the barycentric subdivision of the order complex of the $d$-divisible Boolean lattice $\cB_{dm}^{(d)}$.

 In order to state Stanley's result we require some results about the Boolean algebra.

The following result of Solomon \cite{sol:1968} and Stanley \cite{sta:gap}  completely determines the $\mathfrak{S}_n$-action on the rank-selected homology in the Boolean lattice $\cB_n$.  Recall from the previous section our conventions for Ferrers diagrams of integer partitions and standard Young tableaux.
If $T=\{1\le t_1<\cdots<t_r\le n-1\}$ is a nonempty subset of the nontrivial ranks of $\cB_n$, define the following.  
\begin{itemize}
\item
$\rho_T$ is the rim hook (border strip) of size $n$ whose rows, top  to bottom, have lengths 
$n-t_r, t_r-t_{r-1}, t_{r-1}-t_{r-2}, \ldots, t_2-t_1, t_1$, and 
\item $f^{\rho_T}$ is the number of standard Young tableaux of the skew shape $\rho_T$.  This is also the number of permutations in $\mathfrak{S}_n$ with descent set equal to $\{t_1, t_2, \ldots, t_r\}$. 
\end{itemize}
Recall that the rank of an element in the Boolean lattice is its cardinality as a set.
\begin{thm}[{\cite[Theorem 4.3]{sta:gap}}]\label{BooleanRs} Let $T=\{1\le t_1<\cdots<t_r\le n-1\}$ be a subset of the nontrivial ranks of $\cB_n$.  The  representation of $\mathfrak{S}_n$ on the homology of the rank-selected subposet $\cB_n(T)$ is isomorphic to the Specht module indexed by the rim hook $\rho_T$, and hence has dimension $f^{\rho_T}$.
\end{thm}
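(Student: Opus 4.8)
The plan is to reduce to the $\alpha$--$\beta$ inversion formula~\eqref{alpha-beta} and then recognize the resulting symmetric function as a ribbon (border strip) Schur function. Since rank-selection preserves the Cohen--Macaulay property~\cite[Theorem 6.4]{bac:1980}, the poset $\cB_n(T)$ is Cohen--Macaulay, so $\tilde H(\cB_n(T))$ is an honest $\mathfrak{S}_n$-module and~\eqref{alpha-beta} computes its Frobenius characteristic. The first step is to identify the maximal-chain modules $\alpha_{\cB_n}(T')$ for $T'=\{t'_1<\cdots<t'_j\}\subseteq T$: a maximal chain of $\cB_n(T')$ is a tower $\emptyset\subset X_1\subset\cdots\subset X_j\subset[n]$ with $|X_i|=t'_i$, equivalently the ordered set partition $(X_1,\,X_2\setm X_1,\,\ldots,\,[n]\setm X_j)$ of type the composition $\alpha(T'):=(t'_1,\,t'_2-t'_1,\,\ldots,\,n-t'_j)$. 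The group $\mathfrak{S}_n$ acts transitively on such chains with stabilizer a Young subgroup $\mathfrak{S}_{\alpha(T')}$, so $\ch\alpha_{\cB_n}(T')=h_{\alpha(T')}$. Feeding this into~\eqref{alpha-beta} gives
$$
\ch\tilde H(\cB_n(T))=\sum_{T'\subseteq T}(-1)^{|T|-|T'|}\,h_{\alpha(T')},
$$
and as $T'$ ranges over the subsets of $T$ the composition $\alpha(T')$ ranges over all coarsenings of $\alpha(T)$, with $\ell(\alpha(T'))=|T'|+1$.

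The second step is to recognize this alternating sum. Recall the ribbon Schur function $r_\beta$ attached to a composition $\beta$; the standard expansion of a product of complete homogeneous functions into ribbons reads $h_\beta=\sum_\gamma r_\gamma$, summed over coarsenings $\gamma$ of $\beta$, and M\"obius inversion over the Boolean lattice $\cB_{\ell(\beta)-1}$ of coarsenings of $\beta$ yields $r_\beta=\sum_\gamma(-1)^{\ell(\beta)-\ell(\gamma)}h_\gamma$. Applying this with $\beta=\alpha(T)$ identifies $\ch\tilde H(\cB_n(T))=r_{\alpha(T)}$. Finally, reading row lengths from top to bottom, the ribbon whose composition is $\alpha(T)$ is the $180^\circ$ rotation of $\rho_T$; since a skew Schur function is unchanged under $180^\circ$ rotation of its diagram, $r_{\alpha(T)}=s_{\rho_T}$, which is the Frobenius characteristic of the Specht module $S^{\rho_T}$. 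Hence $\tilde H(\cB_n(T))\iso S^{\rho_T}$, and the dimension count follows from the classical fact $\dim S^{\rho_T}=f^{\rho_T}$.

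The substantive inputs are the $\alpha$--$\beta$ inversion~\eqref{alpha-beta} (available from~\cite{sta:gap}) and the ribbon Jacobi--Trudi identity $r_\beta=\sum_\gamma(-1)^{\ell(\beta)-\ell(\gamma)}h_\gamma$; neither is deep, but both must be quoted with care, and one must check that the homology really is concentrated in the top degree so that the virtual identity in~\eqref{alpha-beta} is an honest isomorphism. The one genuine bookkeeping hazard is keeping straight which composition corresponds to $T$: the maximal-chain description produces $\alpha(T)$, whereas $\rho_T$ is defined with rows listed from $n-t_r$ down to $t_1$, i.e.\ with $\alpha(T)$ reversed, so the reconciliation rests on the rotation-invariance of $s_{\rho_T}$. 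Alternatively one could bypass~\eqref{alpha-beta} entirely and argue from the standard EL-labeling of $\cB_n$, whose decreasing maximal chains in $\cB_n(T)$ are indexed by permutations with descent set $T$; but the route through~\eqref{alpha-beta} is shorter given what has already been assembled.
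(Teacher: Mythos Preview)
Your argument is correct and is essentially the standard proof. Note, however, that the paper does not give its own proof of this theorem: it is stated with a terminal \qed\ and attributed to Solomon~\cite{sol:1968} and Stanley~\cite[Theorem~4.3]{sta:gap}, so there is nothing in the paper to compare against beyond the citation. Your route through the $\alpha$--$\beta$ inversion~\eqref{alpha-beta} and the ribbon Jacobi--Trudi identity is exactly the classical computation, and your care in reconciling the composition $\alpha(T)=(t_1,t_2-t_1,\ldots,n-t_r)$ with the paper's convention for $\rho_T$ (rows $n-t_r,\ldots,t_1$ from top to bottom) via the $180^\circ$ rotation invariance of skew Schur functions is the correct bookkeeping.
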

For example, when the rank set is $\{1, 2, \ldots,j\}$, $1\le j\le n-1$,  the rank-selected homology representation for the Boolean lattice $\cB_n$ is the irreducible indexed by  the partition $(n-j, 1^{j})$.  For the rank set $\{k, k+1,\ldots, n-1\}$, $1\le k\le n-1$, the representation is the irreducible indexed by $(k, 1^{n-k})$.

Let $P$ be a Cohen-Macaulay poset of rank $m$. Then  $P^{\sdiv}$ is also a graded Cohen-Macaulay poset of the same rank $m$ as $P$.  Let $G$ be a group of automorphisms of $P$. 
Following \cite[Theorem 8.3]{sta:gap}, for each $i=0, 1, \ldots, m-1$, define $\eta_i(P)$ to be the $G$-module 
\[ \eta_i(P)=\bigoplus_{\substack{T\subseteq [m-1]\\|T|=i}} \beta_{P}(T).
\]

Also define, for each $i=0, 1, \ldots, m-1$ and each subset $T$ of $[m-1]$, the nonnegative integer 
$c_{i,m}(T)$ to be 
\[c_{i,m}(T)=\#\{\sigma\in \mathfrak{S}_m: \sigma(m)=m-i \text{ and } \Des(\sigma)=T\}.
\]
Stanley's elegant formula is the following.
\begin{thm}[{\cite[Theorem 8.3]{sta:gap}}]\label{thm:sd-rs}  Let $P$ be a Cohen-Macaulay poset of rank $m$.  The rank-selected homology representation of the barycentric subdivision $P^{\mathrm{sd}}$ is given by the formula
\[\beta_{P^{\mathrm{sd}}}(T)=\bigoplus_{i=0}^{m-1} c_{i,m}(T)\, \eta_i(P).
\]
\end{thm}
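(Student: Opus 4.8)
The plan is to push everything through the $G$-modules $\alpha_P(R)$ of maximal chains of the rank-selected subposets $P(R)$ of $P$ itself. Since $\mathrm{sd}(P)$ is Cohen--Macaulay of rank $r$ (the largest size of a chain in $\Pb$, plus one), equation~\eqref{alpha-beta} gives
\[
\beta_{\mathrm{sd}(P)}(T)=\sum_{S\sbe T}(-1)^{|T|-|S|}\,\alpha_{\mathrm{sd}(P)}(S),
\]
and, via $\eta_i(P)=\bigoplus_{|R'|=i}\beta_P(R')$ together with the inversion $\beta_P(R')=\sum_{R\sbe R'}(-1)^{|R'|-|R|}\alpha_P(R)$, it rewrites the claimed right-hand side $\bigoplus_i c_{i,r}(T)\,\eta_i(P)$ as a $\mathbb{Z}$-combination of the $\alpha_P(R)$ as well. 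The first task is therefore to express $\alpha_{\mathrm{sd}(P)}(S)$ in terms of the $\alpha_P(R)$. A maximal chain of $\mathrm{sd}(P)(S)$, for $S=\{s_1<\dots<s_k\}\sbe[r-1]$, is a flag of chains $C_1\subsetneq\dots\subsetneq C_k$ in $\Pb$ with $|C_j|=s_j$; recording the set $R\sbe[r-1]$ of $P$-ranks occupied by the elements of $C_k$ (so $|R|=s_k=\max S$), one sees that $C_k$, with $\hat0,\hat1$ adjoined, is a maximal chain of $P(R)$, while the sub-flag $C_1\subset\dots\subset C_{k-1}\subset C_k$ is a choice of nested subsets of $C_k$ of the prescribed sizes, on which $G$ acts trivially. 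This yields the $G$-isomorphism
\[
\alpha_{\mathrm{sd}(P)}(S)\iso\bigoplus_{\substack{R\sbe[r-1]\\ |R|=\max S}}\binom{\max S}{s_1,\ s_2-s_1,\ \dots,\ s_k-s_{k-1}}\,\alpha_P(R),
\]
with the convention $\alpha_{\mathrm{sd}(P)}(\emp)=\alpha_P(\emp)$, the trivial module.

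Substituting into the two expansions above, the coefficient of a given $\alpha_P(R)$ turns out to depend only on $q:=|R|$, on $T$, and on $r$. On the left it is $\sum_{S}(-1)^{|T|-|S|}\binom{q}{\Delta S}$, the sum over $S\sbe T$ with $\max S=q$ (or $S=\emp$ when $q=0$), where $\binom{q}{\Delta S}$ abbreviates the multinomial coefficient built from the consecutive differences of $\{0\}\cup S$; on the right it is $\sum_{l\ge q}(-1)^{l-q}\binom{r-1-q}{l-q}\,c_{l,r}(T)$. Because both sides of the theorem are now given as $G$-module combinations of the $\alpha_P(R)$ with these universal coefficients, the theorem reduces to the purely numerical identity that these two alternating sums coincide for every $T\sbe[r-1]$ and every $q$ with $0\le q\le r-1$; no linear independence of the $\alpha_P(R)$ is required.

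For that identity I would argue combinatorially. Writing $c_{l,r}(T)=\sum_{T'\sbe T}(-1)^{|T|-|T'|}d_l(T')$ with $d_l(T')=\#\{\si\in\mathfrak{S}_r:\si(r)=r-l,\ \Des\si\sbe T'\}$, one checks that a permutation counted by $d_l(T')$ is increasing on each block of the composition of $r$ cut out by $T'$, hence is recorded by a distribution of $[r]$ among those blocks with the maximum of the last block prescribed to be $r-l$; summing the right-hand alternating sum over $l$ then telescopes it into the left-hand sum, which counts with signs the ordered set partitions of $[q]$ refining the composition of $q$ determined by $T$. Equivalently, both sides are the same flag $h$-vector entry of an augmented ribbon complex, and the identity can be read off from Stanley's theory of $(P,\om)$-partitions, or produced by a sign-reversing involution on pairs (permutation, block-distribution). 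The delicate part --- and the step I expect to be the main obstacle --- is precisely this bookkeeping: tracking the position of the value $r$ in $\si$, which encodes the $P$-rank filled in last by the barycentric subdivision, while simultaneously controlling the descent set. The rest (the reduction to the $\alpha_P(R)$, the description of the maximal chains of $\mathrm{sd}(P)(S)$, and the two expansions) is routine.
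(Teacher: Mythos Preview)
The paper does not prove this theorem; it is quoted verbatim from Stanley~\cite[Theorem~8.1]{sta:gap} and closed with a \qed.  The only hint the paper offers is the remark, just after the proof of Theorem~\ref{rksel-Omd}, that ``this proof can be adapted to give a proof of Stanley's formula in Theorem~\ref{thm:sd-rs}'', i.e.\ an argument via upper intervals $[\tau,\hat1]$ and the rank-selected Boolean homology, rather than via chain modules.

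Your route is different and is, in outline, sound.  The decomposition
\[
\alpha_{\mathrm{sd}(P)}(S)\cong\bigoplus_{|R|=\max S}\binom{\max S}{s_1,\,s_2-s_1,\,\ldots}\,\alpha_P(R)
\]
is correct: the stabiliser of a chain $C_k$ in $\Pb$ fixes every element of $C_k$ (chains are totally ordered), hence fixes every subflag, so the subflag count really does come out as a multiplicity.  Likewise your computation of the coefficient of $\alpha_P(R)$, $|R|=q$, in $\sum_i c_{i,r}(T)\,\eta_i(P)$ as $\sum_{l\ge q}(-1)^{l-q}\binom{r-1-q}{\,l-q\,}c_{l,r}(T)$ is right.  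So the reduction to a numerical identity depending only on $(q,T,r)$ is legitimate.

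The gap is exactly where you flag it.  You do not actually prove the numerical identity; the paragraph beginning ``summing the right-hand alternating sum over $l$ then telescopes\ldots'' is a description of the flavour of the argument, not an argument.  In fact, once you carry out inclusion--exclusion on the left you get, for $q\in T$,
\[
\text{LHS}=(-1)^{|T\cap[q+1,r-1]|}\,\#\{\tau\in\mathfrak{S}_q:\Des\tau=T\cap[q-1]\},
\]
and for $q\notin T$, $q\ge1$, the left side vanishes.  Matching this against the right side requires a genuine bijection or sign-reversing involution keyed to the position (not the value) of $r$ in $\sigma$, and your sketch does not pin this down.  The identity is true, but you have not yet proved it; the ``$(P,\omega)$-partitions'' and ``augmented ribbon complex'' pointers would each need several more lines to become a proof.
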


Observe that when $P=\cB_n$, $\eta_i(\cB_n)$ is the \emph{Foulkes} representation, namely, the sum of the Specht modules indexed by $\rho_T$ for all subsets of ranks $T$ of cardinality $i$, and the Frobenius characteristic $\ch \eta_i(\cB_n)$ is  the sum of the Schur functions indexed by rim hooks $\rho_T$ for all $T$ of size $i$.

We will use the Whitney homology technique of Theorem~\ref{thm:rksel-P} to give an  alternative recursive description of the group action on the rank-selected homology of $P^{\mathrm{sd}}$. Our recursion lends itself more easily to representation-theoretic  computations;  moreover, as we show later, multiplicities of certain irreducibles can also be computed more easily from our formulation.  It  exploits certain key features of the barycentric subdivision $P^{\mathrm{sd}}$, extracted in Lemma~\ref{lem:intervalS-special-action} below.  Such a recursive formulation can therefore be obtained for any posets satisfying the condition of the lemma.  We  address the   equivalence with Stanley's description, Theorem~\ref{thm:sd-rs}, in Subsection~\ref{subsec:equiv-S-RPS}.

Let $P$ be a ranked and bounded poset with a group of automorphisms $G$. Observe that the induced action of  $G$  on the barycentric subdivision $P^{\sdiv}$ has the following property: if $g\cdot x=x$ for $x\in P^{\sdiv}$, then  $g\cdot y=y$ for every $y\in P^{\sdiv}$ such that $y\le x$.   
It is now useful to record the following lemma. 

\begin{lem}\label{lem:intervalS-special-action} Let $P$ be a ranked and bounded poset with a group of automorphisms $G$. Assume the action of the group $G$ satisfies this condition: if $g\cdot x=x$ for $x\in P$, then  $g\cdot y=y$ for every $y\in P$ such that $y\le x$. Let  $\mathbf{1}_{G_x}$ denote the trivial representation of the  stabiliser subgroup $G_x$ of $x$. Then for each $x\in P$ at rank $r$, the following hold.
\begin{enumerate} 
\item[(1)] 
The action of  $G_x$ on the reduced homology of the interval $(\hat 0,x)$  is given by 
$ |\mu(\hat 0,x)| \, \mathbf{1}_{G_x}$, provided the interval $(\hat 0,x)$ has homology concentrated in a single degree.
\item[(2)]  The action of $G_x$ on  the maximal chains in the interval $(\hat 0, x)$ is (always) given by 
$\#\{\text{maximal chains in $(\hat 0, x)$}\} \, \mathbf{1}_{G_x}$.
\end{enumerate}
\end{lem}
\begin{proof} In each case the stabiliser subgroup $G_x$ acts trivially on the interval $(\hat 0, x)$, so the representation is simply the trivial module with multiplicity equal to the dimension of the underlying vector space.  For Item (2) this dimension is precisely the number maximal chains from $\hat 0$ to $x$, while for Item (1), since homology occurs in only one degree,   this dimension is given by the absolute value of the (reduced) Euler characteristic, which is $ |\mu(\hat 0,x)|$.
\end{proof}

Item (2) of Theorem~\ref{thm:sdP-rksel-recS} is due to Stanley~\cite[Proposition 8.1]{sta:gap};
it  is included here for completeness.

\begin{thm}\label{thm:sdP-rksel-recS} Let $P$ be a Cohen-Macaulay poset of rank $m$, and let $G$ be a group of automorphisms of $P$. Let $T=\{t_1<t_2 <\cdots < t_r\}$ be a subset of the nontrivial ranks $[m-1]$ of $P$.  Then for the barycentric subdivision $P^{\sdiv}$ we have the following:
\begin{enumerate}
\item[(1)] The $G$-module structure of  
the homology of the rank-selected subposets $\tilde{H}(P^{\sdiv}(T))$ satisfies the recurrence 
\[\beta_{P^{\sdiv}}(T) +\beta_{P^{\sdiv}}(T\setminus \{t_r\})
=
\delta(T) \sum_{\substack{S\subseteq [m-1]\\ |S|=t_r}} \alpha_P(S),
\]
where $\delta(T)=\#\{\sigma\in \fS_{t_r} \mid \Des \sigma= T\setminus \{t_r\} \}$.
\item[(2)]\cite[Proposition 8.1]{sta:gap} The $G$-module structure of  the maximal chains of the  rank-selected subposet $\tilde{H}(P^{\sdiv}(T))$ is  determined by the equality of $G$-modules
\[\alpha_{P^{\sdiv}}(T)= a_P(T) \sum_{\substack{S\subseteq [m-1]\\ |S|=t_r}} \alpha_P(S),
\] 
where $a_P(T)$ is the number of maximal chains $\mathbf{c}_1 \subset \mathbf{c}_2 \subset \cdots \subset \mathbf{c}_r$ with $\rk(\mathbf{c}_i)=t_i, i=1,\ldots r$, and hence $a_P(T)=\binom{t_r}{t_1, t_2-t_1,\ldots, t_r-t_{r-1}}$.
\end{enumerate}
\end{thm}

\begin{proof}   The poset $P^{\sdiv}$ consists of chains $\mathbf{c}=(\hat 0<x_1<\cdots<x_k<\hat 1), x_i\in P $; such a chain is simply a totally ordered $k$-subset  of $\bar P$, making  $P^{\sdiv}$ ranked with rank function defined by the number of elements in the chain, i.e. $\rk(\mathbf{c})=k$.  
A  maximal chain of elements in the rank-selected subposet $P^{\sdiv}(T)$ is thus of the form 
\begin{equation}\label{eqn:chains-in-sdP}\mathbf{c}_1 \subset \mathbf{c}_2 \subset \cdots \subset \mathbf{c}_r, \quad \rk(\mathbf{c}_i)=t_i, \, i=1,\ldots, r.
\end{equation}

From Theorem~\ref{thm:rksel-P}, 
the left-hand side of Item (1) equals the $t_r$th Whitney homology of $P^{\sdiv}$.  That is, we have the equality of $G$-modules 
\begin{equation}\label{eqn:prf-subdivP-hom}
\beta_{P^{\sdiv}}(T) +\beta_{P^{\sdiv}}(T\setminus \{t_r\}) =W\!H_{t_r}(P^{\sdiv})
=\bigoplus_{\substack{\mathbf{c}\in P(T)\\  \rk(\mathbf{c})= t_r}} 
\tilde{H}(\hat 0, \mathbf{c})_{P^{\sdiv}(T)},\end{equation}
where, by definition of $P^{\sdiv}$,  the right-hand side is the sum over all maximal   chains $\mathbf{c}=(\hat 0<x_1<\cdots<x_k<\hat 1) $ in $P$ with $k=t_r$.  

The  subchains  of  a chain $\mathbf{c}$  are simply ordered subsets of the elements of $\mathbf{c}$.
By~\eqref{eqn:chains-in-sdP}, the elements of the interval $(\hat 0, \mathbf{c})_{P^{\sdiv}(T)}$  consist of subchains  of $\mathbf{c}$  with sizes taken from $t_1, \ldots, t_{r-1}$, and hence this interval  is   isomorphic to  the rank-selected subposet $B_{t_r}(T\setminus\{t_r\})$ of the Boolean lattice $B_{t_r}$.  Its homology therefore has dimension 
$\delta(T)$, by Theorem~\ref{BooleanRs}.

Now the action of the stabiliser $G_{\mathbf{c}}$ of the chain $\mathbf{c}$  on the interval $(\hat 0, \mathbf{c})_{P^{\sdiv}}$ clearly satisfies the conditions of Lemma~\ref{lem:intervalS-special-action}, so Item (1) of that lemma applies.  There is one orbit of this action for each  orbit representative of $G$ acting on  chains in $P$  with $t_r$ elements, as described in \eqref{eqn:prf-subdivP-hom}.  But this is precisely the pemutation action on the rank-selected chains of $P$ corresponding to a rank set  $S$ in $P$, i.e., it is the representation $\alpha_P(S)$ for  
$S$ of size $t_r$.   The expression in the right-hand side of the statement in Item (1) follows.

A similar argument using Item (2) of Lemma~\ref{lem:intervalS-special-action} 
establishes the statement about the $G$-action on the rank-selected maximal chains.
\end{proof}

Our recursive formula in Theorem~\ref{thm:sdP-rksel-recS} has the following corollary.

\begin{cor}\label{cor:from-sdP-rksel-recS} With the hypotheses and notations of Theorem~\ref{thm:sdP-rksel-recS}, the sum of homology modules 
\[\beta_{P^{\sdiv}}(T) +\beta_{P^{\sdiv}}(T\setminus \{\max T\})\]
is always a permutation module. Moreover, we have the identity 
\begin{equation}\label{eqn:curious-id}
a_P(T) \left( \beta_{P^{\sdiv}}(T) +\beta_{P^{\sdiv}}(T\setminus \{\max T\})  \right)
=\delta(T)\, \alpha_{P^{\sdiv}}(T).
\end{equation}
\end{cor}

We can replace the poset $P$ with any rank-selected subposet $\cB_n(U)$ of the Boolean lattice.  For the rank-selected homology of its barycentric subdivision, we  then obtain the  equality of $\fS_n$-modules
\[\beta_{\cB_n(U)^{\sdiv}}(T) +\beta_{\cB_n(U)^{\sdiv}}(T\setminus \{\max T\}) 
=\delta(T) \sum_{\substack{S\subseteq U\\|S|=\max T}} \alpha_{\cB_n(U)}(S)
\]
It is well known (e.g., \cite{sag:aoc}) that  subsets $S=\{s_1<s_2<\cdots<s_r\}$ of $[n-1]$ map bijectively to 
compositions $\Comp(S)=(s_1, s_2-s_1,\ldots, n-s_r)$  of $n$, of length $|S|+1$.

When $S\subseteq U$, the $S$-rank-selected subposet of $\cB_n(U)$ coincides with $\cB_n(S)$, and thus $\alpha_{\cB_n(U)}(S)=\alpha_{\cB_n}(S)$; moreover 
$\alpha_{\cB_n}(S) =\prod_{i\in \Comp(S)} h_i$. Hence we obtain the following proposition as a special case of Theorem~\ref{thm:sdP-rksel-recS}. 

\begin{prop}\label{prop:rankSel-Bool-sdiv} Let $U$ be any subset of the nontrivial ranks $[n-1]$ of $\cB_n$, and let $T\subseteq U$.  Let $P$ be the rank-selected subposet $P=\cB_n(U)$.  For the rank-selected homology of its  barycentric subdivision $P^{\sdiv}(T)$, we  obtain the  equality of $\fS_n$-modules 
\[\beta_{\cB_n(U)^{\sdiv}}(T) +\beta_{\cB_n(U)^{\sdiv}}(T\setminus \{\max T\}) 
=\delta(T) \sum_{\substack{S\subseteq U\\|S|=\max T}}  \prod_{i\in \Comp(S)} h_i.
\]
\end{prop}

 The right-hand side of Proposition~\ref{prop:rankSel-Bool-sdiv} can  also be written as a sum over compositions of $m$ of length $1+\max T$ that are coarser than $\Comp(U)$, since  
 $S\subseteq U \iff \Comp(U)$ refines $\Comp(S)$.

\subsection{Equivalence of Theorem~\ref{thm:sdP-rksel-recS} and Theorem~\ref{thm:sd-rs}}\label{subsec:equiv-S-RPS}


In this subsection we show that our rank-selected homology recurrence in Theorem~\ref{thm:sdP-rksel-recS} is in fact equivalent to Stanley's theorem, Theorem~\ref{thm:sd-rs}. 

It suffices to show that, with the hypotheses of Theorem~\ref{thm:sdP-rksel-recS}, for $P$ of rank $m$ and the rank set $T=\{t_1<\cdots< t_r\}$,  we have the equality:

\begin{equation}\label{eqn:Sranksel-equiv-RPS-sd}
\delta(T) \sum_{\substack{S\subseteq [m-1]\\ |S|=t_r}} \alpha_P(S)
= \sum_{i=0}^{m-1}( c_{i,m}(T)+c_{i,m}(T\setminus \{t_r\})\eta_i(P),
\end{equation}
where $c_{i,m}(U)$ and $\eta_i(P)$ are as defined before Theorem~\ref{thm:sd-rs}. 

First we observe that  
\begin{equation}\label{eqn:ref-prf} c_{i,m}(T)+c_{i,m}(T\setminus \{t_r\})= \delta(T) \binom{m-1-i}{m-1-t_r} .
\end{equation}
Indeed, the left-hand side counts the number of permutations $\pi$ in $\fS_m$ such that $\pi(m)=m-i$ and $\Des \pi=T$ or $\Des \pi=T\setminus \{t_r\}$. This forces  $\pi(t_r+1)< \cdots < \pi(m-1) <\pi(m)=m-i$, giving  $\binom{m-1-i}{m-1-t_r}$ choices for  the letters $\pi(t_r+1), \ldots,  \pi(m-1)$. The remaining $t_r$   letters can then be inserted  in $\delta(T)$ ways to create a descent set equal to $T\setminus \{t_r\}$.  Finally $t_r$ itself may or may not be a descent. \footnote{We thank the referee who provided this argument.}

Hence \eqref{eqn:Sranksel-equiv-RPS-sd} becomes 
\begin{equation}\label{eqn:Sranksel-equiv-RPS-sd-2}
 \sum_{\substack{S\subseteq [m-1]\\ |S|=t_r}} \alpha_P(S)
= \sum_{i=0}^{m-1}\binom{m-1-i}{m-1-t_r}\eta_i(P). 
\end{equation}

Using~\eqref{alpha-from-beta}, the left-hand side is  
\begin{align*}\sum_{\substack{S\subseteq [m-1]\\ |S|=t_r}} \sum_{U\subseteq S} \beta_P(U)
&=\sum_{\substack{S\subseteq [m-1]\\ |S|=t_r}} \sum_{i=0}^{m-1} \sum_{\substack{U\subseteq S\\|U|=i}}\beta_P(U)\\
&=\sum_{i=0}^{m-1} \sum_{\substack{U\\|U|=i}}\beta_P(U) \sum_{\substack{ S\supseteq U\\|S|=t_r}} (1)\\
&=\sum_{i=0}^{m-1} \eta_i(P) \#\{S\supseteq U \mid |S|=t_r\}.
\end{align*}

But since $|U|=i$, we have  $\#\{S\supseteq U \mid |S|=t_r\}=\binom{m-1-i}{t_r-i}$ because the set $S$ is obtained by augmenting $U$ with $t_r-i$ elements chosen from the $m-1-i$ elements not in $U$.  We have obtained the right-hand side of~\eqref{eqn:Sranksel-equiv-RPS-sd-2}, completing the proof of the equivalence of our formulation with Stanley's expression in terms of the Foulkes representations.

\subsection{Consequences for $\Om_{dm}^{(d)}$}

We now return to $\Om_{dm}^{(d)}$.  Recall that the face lattice of the order complex of $\bar{\cB}_{dm,d}$ is poset-isomorphic to the dual 
${(\Om_{dm}^{(d)})}^*$ of the lattice $\Om_{dm}^{(d)}$.  Equivalently,  the dual of $\Om_{dm}^{(d)}$ is the barycentric subdivision $P^{\sdiv}$ for the Boolean subposet $P=\cB_{dm}^{(d)}$.
Hence Theorem~\ref{thm:sdP-rksel-recS} translates into the following, with ranks replaced by \emph{coranks} in $\Om_{dm}^{(d)}$:

\begin{thm}
\label{rksel-Omd}
Let $T=\{1\le t_1<\cdots <t_r\le m-1\}$ be a nonempty subset of the \emph{coranks} $[m-1]$ of $\Om_{dm}^{(d)}$.  
Consider the action of the symmetric group $\mathfrak{S}_{dm}$ on  the corank-selected poset $\Om_{dm}^{(d)}(T)$. 
\begin{enumerate}
\item[(a)] The Frobenius characteristic of the action on the top homology of ${\Om_{dm}^{(d)}}(T)$ satisfies
\begin{equation}
\label{RkSelHom} 
\beta_{dm}^{(d)}(T) +\beta_{dm}^{(d)}(T\setminus \{t_r\})
= \delta(T)\sum_{\al\in\cC(m,t_r+1)} h_{d\alpha},
\end{equation}
where 
$$
\delta(T)=\#\{\sigma\in \mathfrak{S}_{t_r} \mid \Des \sigma= \{t_1, \ldots, t_{r-1}\}  \}.
$$
Hence it is a polynomial in 
$\{h_{di} : 1\le i\le m\}$ with  integer coefficients. 
\item[(b)] The Frobenius characteristic of the action on
the maximal chains of $\Om_{dm}^{(d)}(T)$ is
\begin{equation}
\label{RkSelChains}
\al_{dm}^{(d)}(T) =
a_T\sum_{\al\in\cC(m, t_r+1)} h_{d\alpha}
\end{equation}
where 
$$
a_T=\frac{t_r!}{t_1! (t_2-t_1)!\cdots (t_r-t_{r-1})!}.
$$
Hence it is a polynomial in  
$\{h_{di} \mid 1\le i\le m\}$ with nonnegative integer coefficients. 

\end{enumerate}
\end{thm}


We have  the following  pleasing consequence of this theorem and its proof.  Define  the ring homomorphism  \[\Psi_d:h_k\mapsto h_{dk}\] in the ring of polynomials $\mathbb{Z}[h_1, h_2, \ldots]$, in the formal variables $\{h_k\}_{k\ge 0}$.

\begin{cor}\label{cor:Om-to-Omd} Let $f$ be the Frobenius characteristic of the 
$\mathfrak{S}_{m}$-action on the homology of any rank-selected subposet $\Om_m(T)$ of $\Om_m$ (respectively the $\mathfrak{S}_{m}$-action on the maximal chains of any rank-selected subposet of $\Om$).  Write $f$ in the basis of complete homogeneous symmetric functions $\{h_1, h_2,\ldots\}$. Then for any $d\ge 1$, the Frobenius characteristic of the corresponding 
$\mathfrak{S}_{md}$-action on the same modules associated to the poset $\Om_{md}^{(d)}$ is given by $\Psi_d(f)$.
\end{cor}
We end this section with a brief digression  regarding the case $d=1$. 
Since $\Omega_n$ is Eulerian, by Alexander duality \cite[Chapter 3, Section 16]{sta:ec2}, \cite[Section 2]{sta:gap} we have the following observation.
 
  \begin{prop}\label{Alex*-Om} 
  Let $S\subseteq[m-1]$ be a subset of ranks of $\Om_m$, and let $\bar{S}$ be its complement in $[m-1]$. 
  Then 
   \[\tilde{H}(\Om_m(S))\iso  \tilde{H}(\Om_m(\bar{S})) \, \sgn_{\mathfrak{S}_m}.\]
  \end{prop}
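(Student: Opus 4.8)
The plan is to deduce the proposition from the rank-selected description of $\tilde H(\Om_m)$ provided by Corollary~\ref{d-div-Om} in the case $d=1$ (where $\cB_m^{(1)}=\cB_m$), combined with the ``Alexander-dual'' behaviour of rank selection in the Boolean lattice. First I would record that Boolean case: for $U\sbe[m-1]$, Theorem~\ref{BooleanRs} gives $\tilde H(\cB_m(U))\iso S^{\rho_U}$, the Specht module of the rim hook $\rho_U$. Tensoring a Specht module with the sign conjugates its indexing shape, and the conjugate (transpose) of the rim hook $\rho_U$ is the rim hook $\rho_{\ol U}$, where $\ol U=[m-1]\sm U$ --- transposing a border strip complements the set of partial sums of its row-length composition. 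Hence $\tilde H(\cB_m(U))\otimes\sgn_m\iso\tilde H(\cB_m(\ol U))$, and summing over all $U$ with $|U|=i$ in the definition $\eta_i(\cB_m)=\bigoplus_{|U|=i}\tilde H(\cB_m(U))$ yields
\[
\eta_i(\cB_m)\otimes\sgn_m\iso\eta_{m-1-i}(\cB_m)\qquad(0\le i\le m-1).
\]

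Next I would feed this into Corollary~\ref{d-div-Om}. It is convenient to index rank sets by coranks, since the bijection $r\mapsto m-r$ between ranks and coranks of a poset of rank $m$ commutes with complementation in $[m-1]$; write $\beta_m(S)=\ch\tilde H(\Om_m(S))$. The corollary with $d=1$ then reads $\beta_m(S)=\bigoplus_{i=0}^{m-1}c_{i,m}(S)\,\eta_i(\cB_m)$, where $c_{i,m}(S)=\#\{\sigma\in\fS_m\mid\sigma(m)=m-i,\ \Des\sigma=S\}$. Tensoring with $\sgn_m$ and applying the displayed identity gives
\[
\beta_m(S)\otimes\sgn_m=\bigoplus_{i=0}^{m-1}c_{i,m}(S)\,\eta_{m-1-i}(\cB_m)=\bigoplus_{j=0}^{m-1}c_{m-1-j,m}(S)\,\eta_j(\cB_m),
\]
so the result will follow once I verify the purely combinatorial identity $c_{m-1-j,m}(S)=c_{j,m}(\ol S)$ for every $j$ and compare with $\beta_m(\ol S)=\bigoplus_j c_{j,m}(\ol S)\,\eta_j(\cB_m)$. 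For that identity I would use the involution $\sigma\mapsto w_0\sigma$ of $\fS_m$, with $w_0$ the longest element: it complements descent sets, $\Des(w_0\sigma)=[m-1]\sm\Des\sigma$, and sends $\sigma(m)=a$ to $(w_0\sigma)(m)=m+1-a$, hence restricts to a bijection from $\{\sigma:\sigma(m)=j+1,\ \Des\sigma=S\}$ to $\{\tau:\tau(m)=m-j,\ \Des\tau=\ol S\}$. This gives $\beta_m(S)\otimes\sgn_m=\beta_m(\ol S)$, and translating coranks back to ranks is exactly the proposition.

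Conceptually this is Alexander duality: $\De(\Om_m)$ is a sphere $S^{m-2}$ (it is the barycentric subdivision of $\De(\cB_m)$ by~\eqref{barycen}, equivalently the order complex of the face lattice of the permutohedron), the subcomplexes $\De(\Om_m(S))$ and $\De(\Om_m(\ol S))$ are complementary balanced subcomplexes, and the sign twist records that $\fS_m$ acts on the top homology of that sphere through $\sgn_m$ (a transposition of two vertices of the $(m-1)$-simplex reverses the orientation of its boundary). I expect the only real care needed is the bookkeeping between ranks, coranks, and simplex/permutohedron face dimensions, together with correctly invoking the two standard facts above (transposing a ribbon complements its associated subset of $[m-1]$, and $\sigma\mapsto w_0\sigma$ complements descent sets while reversing the last value); there is no genuine analytic obstacle, since the substantive input is already contained in Corollary~\ref{d-div-Om}.
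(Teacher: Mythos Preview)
Your proof is correct, but it takes a genuinely different route from the paper's. The paper's argument is a two-line appeal to Alexander duality: since $\Om_m$ is Eulerian (equation~\eqref{muOmInt}) and Cohen--Macaulay (Theorem~\ref{OmnRAO}), its order complex is a sphere $\mathbb{S}^{m-2}$, and then Stanley's equivariant Alexander duality \cite[Theorem 2.4]{sta:gap} gives the sign-twisted isomorphism directly. You instead deduce the statement from Corollary~\ref{d-div-Om} at $d=1$, by pairing two symmetries: (i) transposing a ribbon complements its associated subset, so $\eta_i(\cB_m)\otimes\sgn_m\iso\eta_{m-1-i}(\cB_m)$; and (ii) the involution $\sigma\mapsto w_0\sigma$ gives $c_{m-1-j,m}(S)=c_{j,m}(\ol S)$. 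Both of these are standard and your verification of them is accurate.

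What each approach buys: the paper's proof is shorter and more conceptual, and it does not rely on the specific decomposition in Corollary~\ref{d-div-Om}; it would apply verbatim to any $\fS_n$-poset whose order complex is a sphere. Your argument, on the other hand, is essentially internal to the machinery already built in the paper and avoids importing the Alexander duality black box; it also makes transparent exactly how the complement $S\mapsto\ol S$ interacts with the two ingredients $c_{i,m}$ and $\eta_i$ of Stanley's barycentric formula, which is a nice consistency check on Corollary~\ref{d-div-Om}. Your closing paragraph correctly identifies the topological picture that the paper actually uses. The only care required, as you note, is the rank/corank bookkeeping, and since $r\mapsto m-r$ commutes with complementation in $[m-1]$ this causes no trouble.
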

  \begin{proof} This follows by Alexander duality in a sphere, as in \cite[Theorem 2.4]{sta:gap}, since the order complex $\Omega_m$ is  homotopy equivalent to the sphere $\mathbb{S}^{m-2}$.
  \end{proof}

 Stanley's   Theorem~\ref{thm:sd-rs}, specialised to the rank-selected homology of $\Omega_{dm}^{(d)}$, appears below. We will use this version  as well as Theorem~\ref{rksel-Omd}  in our study of the multiplicity of the trivial representation in the homology, in the next section.

\begin{prop}[Special case of Theorem~\ref{thm:sd-rs}]\label{prop:d-div-Om} Let $T\subseteq [m-1]$. The corank-selected homology representation of $\Om_{dm}^{(d)}$ is given by the formula
\[\beta_{\Om_{dm}^{(d)}}(T)=\bigoplus_{i=0}^{m-1} c_{i,m}(T)\, \eta_i(\cB_{dm}^{(d))}).
\]
\end{prop}

We list below the first few \emph{corank}-selected homology representations $\beta_m(T)$ for $\Om_m$, up to $m=5,$ in the basis of homogeneous symmetric functions.  This data is computed from the recurrence in Theorem~\ref{rksel-Omd}. In view of Corollary~\ref{cor:Om-to-Omd}, these suffice to determine the corresponding representations for $\Om_{dm}^{(d)}$ for all $d$. Clearly $\beta_m(\emptyset)=h_m$. 
 By Proposition~\ref{Alex*-Om}, we need only list the representations for  half of the corank subsets $T$ of $[m-1]$.

\medskip
\noindent
For $m=4$ it suffices to list all the singleton corank sets:
%
\[\beta_4(\{1\}=2 h_3 h_1 +h_2^2 -h_4\qquad\qquad 
\beta_4(\{2\})=3 h_2 h_1^2-h_4  \qquad\qquad \beta_4(\{3\})=h_1^4-h_4
\]
For $m=5$: 
\begin{align*}
\beta_5(\{1\})&=2 h_3 h_2 +2 h_4 h_1  -h_5  &\quad \beta_5(\{1,2\})&=3 h_3h_1^2 +3 h_2^2 h_1 -2 h_4 h_1 - 2h_3 h_2 +h_5
\\
\beta_5(\{2\})&=3 h_3h_1^2+3 h_2^2 h_1 -h_5 &\quad \beta_5(\{1,3\})&=
8 h_2h_1^3 -2h_4 h_1 -2h_3 h_2 +h_5
\\
\beta_5(\{3\})&=4 h_2h_1^3 -h_5  &\quad \beta_5(\{1,4\})&=
3 h_1^5-2 h_4 h_1 -2 h_3 h_2 +h_5
\\
\beta_5(\{4\})&= h_1^5-h_5  
\end{align*}

\section{Multiplicity of trivial representation}
\label{mtr}

In this section we examine two enumerative invariants that arise in rank-selection for $\Om_m$, namely, the multiplicities of  the trivial representation for the actions of $\mathfrak{S}_m$ and $\mathfrak{S}_{m-1}$.  Here we are viewing $\fS_{m-1}$ as the subgroup of $\fS_m$ which fixes $m$.
Our motivation  is the case of the analogous numbers in the unordered partition lattice $\Pi_n$, which  are known to refine the Euler numbers \cite{sta:gap}, \cite{su:AIM1994}.   A  systematic study was undertaken in  \cite{su:AIM1994}, giving results and conjectures about their positivity.  Some of these were subsequently resolved by Hanlon and Hersh in \cite{HH:2003}, using a partitioning of the quotient complex and spectral sequences.  A complete list of currently known results for  $\Pi_n$ appears in \cite[Theorem 2.12]{su:RPS70-2016}.

 For $\Pi_n$, the question of determining the multiplicity of the trivial representation in the rank-selected homology is a difficult one.  Stanley had originally raised this question in \cite{sta:gap}. The first vanishing result is due to Hanlon \cite{Ha:1983}   and additional results were given by Sundaram, e.g. \cite[Proposition 3.4, Theorems 4.2-4.3 and 4.7]{su:AIM1994}. 
 By contrast, Theorem~\ref{TrivRepCom}  below determines this multiplicity completely for the $\mathfrak{S}_m$-action on corank-selected subposets of $\Omega_m$ and $\Omega_{dm}$.
 
 Inspired by Stanley's question, the paper \cite{su:AIM1994} also examined the multiplicity of the trivial representation for the action of $\mathfrak{S}_{n-1}$ on the rank-selected homology of $\Pi_n$, and showed that it exhibits interesting enumerative properties. 
 Again in contrast to the situation for $\Pi_n$,
 for $\Om_m$  and $\Omega_{dm}$ we are able to give a complete determination of this restricted multiplicity in Theorem~\ref{thm:triv-restrict-zeros}.

Consider $\Om_{dm}^{(d)}$ and a subset of coranks $T\subseteq [m-1]$ as in Theorem~\ref{rksel-Omd}.  Let $b_m(T)$ denote the multiplicity of the trivial representation of $\mathfrak{S}_{dm}$ in the homology of the corank-selected subposet $\Om_{dm}^{(d)}(T)$.  From~\eqref{RkSelHom} of Theorem~\ref{rksel-Omd} 
and the fact that $\#\cC(m,i+1)=\binom{m-1}{i}$, 
one sees  that  these numbers satisfy the recurrence
\begin{equation}\label{eqn:triv-rep-rec}
b_m(T) +b_m(T\setminus\{t_r\}) = \delta(T) \binom{m-1}{t_r}
\end{equation}
with initial condition  $b_m(\emptyset)=1$.
Moreover, since the  quantities $\delta(T)$ depend only on the subset $T$ of $[m-1]$, the invariants $b_m(T)$ are independent of $d$.  We therefore  assume without loss of generality that  $d=1$. 

From ~\eqref{alpha-beta} and Proposition~\ref{MaxChains}, we see that the numbers $\{b_m(T):T\subseteq [m-1]\}$ refine the factorials $(m-1)!$ 
\begin{equation}
\label{eqn:triv-rep-refinement}
\sum_{T\subseteq [m-1]} b_m(T)  = (m-1)!.
\end{equation}

We have the following combinatorial description of the multiplicities $b_m(T)$.
Note that part (b) can be expressed as in part (a) since the number of $\si\in\fS_{m-1}$ with $m-1$ in its descent set is zero.  These results have a topological explanation that we discuss at the end of this section.
\begin{thm}\label{TrivRepCom} 
Let $T$ be a subset of the nontrivial coranks of $\Om_m$.
\begin{enumerate}
\item[(a)] If $m-1\notin T$, then 
$$
b_m(T)=\#\{\sigma\in \mathfrak{S}_{m-1}: \Des\sigma=T\}.
$$
Hence $b_m(T)\ge 1$ in this case.
\item[(b)]
If $m-1\in T$, then $b_m(T)=0$. 
%
\end{enumerate}
\end{thm}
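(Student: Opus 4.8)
The plan is to verify that the quantity claimed in part~(a) satisfies the recurrence~\eqref{eqn:triv-rep-rec} together with the initial condition $b_m(\emptyset)=1$; since~\eqref{eqn:triv-rep-rec} determines all the $b_m(T)$ uniquely by induction on $|T|$ (writing $t_r=\max T$, the set $T\setminus\{t_r\}$ has one fewer element), this will finish the proof. Note that part~(b) is automatic once part~(a) is phrased in the uniform way indicated in the paragraph preceding the theorem: every $\si\in\fS_{m-1}$ has $\Des\si\subseteq[m-2]$, so if $m-1\in T$ the set $\{\si\in\fS_{m-1}\mid\Des\si=T\}$ is empty, matching $b_m(T)=0$. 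Hence it suffices to check that the function $T\mapsto\#\{\si\in\fS_{m-1}\mid\Des\si=T\}$, defined on all subsets $T\subseteq[m-1]$, equals $1$ at $T=\emptyset$ and satisfies~\eqref{eqn:triv-rep-rec}.

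The initial value is clear, the identity being the only permutation with no descents. For the recurrence, fix a nonempty $T=\{t_1<\cdots<t_r\}$. Since $T$ and $T\setminus\{t_r\}$ are the only sets $D$ with $T\setminus\{t_r\}\subseteq D\subseteq T$, the sets $\{\si\mid\Des\si=T\}$ and $\{\si\mid\Des\si=T\setminus\{t_r\}\}$ are disjoint with union $\{\si\in\fS_{m-1}\mid T\setminus\{t_r\}\subseteq\Des\si\subseteq T\}$, so it is enough to count the latter. Given such a $\si=\si_1\cdots\si_{m-1}$, the condition $\Des\si\subseteq T$ forces $\si_{t_r+1}<\si_{t_r+2}<\cdots<\si_{m-1}$, because no integer in the open interval $(t_r,m-1)$ lies in $T$; thus $\si$ is completely determined by the $t_r$-element set $V\subseteq[m-1]$ of values occupying the first $t_r$ positions together with the word $w=\si_1\cdots\si_{t_r}$ formed from the letters of $V$, the tail being the complement of $V$ written increasingly. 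The two conditions $\Des\si\subseteq T$ and $\Des\si\supseteq T\setminus\{t_r\}$ then say precisely that the descents of $w$ among positions $1,\dots,t_r-1$ are exactly $\{t_1,\dots,t_{r-1}\}$, with the descent at position $t_r$ (comparing $\si_{t_r}$ with $\si_{t_r+1}$) left free. By order-isomorphism the number of such words $w$ is $\#\{\tau\in\fS_{t_r}\mid\Des\tau=\{t_1,\dots,t_{r-1}\}\}=\delta(T)$, independently of $V$, and there are $\binom{m-1}{t_r}$ choices of $V$. Therefore the count equals $\delta(T)\binom{m-1}{t_r}$, which is exactly the right-hand side of~\eqref{eqn:triv-rep-rec}.

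The step that demands the most care in the write-up — and the main obstacle — is this last decomposition: one must argue cleanly that prescribing the descent set of the prefix $w$ while leaving the ``boundary'' descent at position $t_r$ unconstrained corresponds exactly to the conditions $T\setminus\{t_r\}\subseteq\Des\si\subseteq T$, that the tail $\si_{t_r+1}<\cdots<\si_{m-1}$ is genuinely forced and contributes no extra freedom, and that $\si\leftrightarrow(V,w)$ is a bijection onto the relevant pairs. Once this is established, uniqueness of the solution to~\eqref{eqn:triv-rep-rec} gives $b_m(T)=\#\{\si\in\fS_{m-1}\mid\Des\si=T\}$ for every $T\subseteq[m-1]$, which is part~(a), and part~(b) follows from the remark above.
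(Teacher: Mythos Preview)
Your proof is correct and takes a genuinely different route from the paper's. The paper applies Corollary~\ref{d-div-Om} (Stanley's formula for rank-selected homology of a barycentric subdivision) to obtain directly
\[
b_m(T)=c_{0,m}(T)=\#\{\sigma\in\fS_m:\sigma(m)=m\text{ and }\Des\sigma=T\},
\]
from which both parts follow at once. You instead verify that the candidate function $T\mapsto\#\{\sigma\in\fS_{m-1}:\Des\sigma=T\}$ satisfies the recurrence~\eqref{eqn:triv-rep-rec} and the initial condition, via the prefix--tail bijection $\sigma\leftrightarrow(V,w)$. Your argument is more elementary and self-contained: it uses only the recurrence already derived from Theorem~\ref{rksel-Omd}, plus a clean descent-set count, and avoids invoking Stanley's general machinery. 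The paper's approach, on the other hand, yields the explicit identification $b_m(T)=c_{0,m}(T)$, which is reused in the proof of Theorem~\ref{thm:triv-restrict-zeros}; with your method that identity would have to be argued separately (though it amounts to the obvious bijection between $\{\sigma\in\fS_m:\sigma(m)=m\}$ and $\fS_{m-1}$). One small point: you do not explicitly justify the ``$b_m(T)\ge1$'' clause in~(a), but this is the standard fact that every subset of $[m-2]$ is realized as the descent set of some permutation in $\fS_{m-1}$, and the paper addresses it by an explicit construction.
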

\begin{proof} Both parts are obtained from Proposition~\ref{prop:d-div-Om}, by putting $d=1$. This gives, for $\Om_m$, 
$$
\beta_{\Om_{m}}(T)=\bigoplus_{i=0}^{m-1} c_{i,m}(T)\, \eta_i(\cB_{m}).
$$
Recall from Theorem~\ref{BooleanRs} that $\eta_i(\cB_{m})$ is the sum of $\mathfrak{S}_m$-irreducibles indexed by ribbons $\rho_T$ for all $T$ of size $i$. 
The multiplicity of the trivial representation in the representation indexed by the ribbon $\rho_T$ is nonzero if and only if $\rho_T$ consists of a single row, and hence the nonzero contribution comes from the term $i=0$ in the  above sum. Thus we have 
\beq
\label{bc}
b_m(T)= c_{0,m}(T)=\#\{\sigma\in \mathfrak{S}_m: \sigma(m)=m \text{ and }\Des\sigma=T\}.  
\eeq

Now for (a), 
 observe that for any subset $T=\{i_1<\cdots<i_k\}\subseteq [m-2]$, it is easy to exhibit a permutation $\sigma\in \fS_m$ with $\si(m)=m$ and  descent set $T$. For example, let $\sigma(i_j)=m-j$, $1\le j\le k$, and, in one-line notation, fill the remaining slots of $\sigma$ with the letters in $[m-1]\setminus T$ in increasing order from left to right. 
As far as part  (b),  note that if $m-1\in T$ then $m-1$ would have to be a descent of $\si$.  But this is impossible since $\si(m)=m$.
\end{proof}

It is interesting to compare the  symmetry result below with the  sign-twisted symmetry of the homology representations resulting from Alexander duality, Proposition~\ref{Alex*-Om}.  In particular,   $b_m(T)$ is also the multiplicity of the \emph{sign} representation in $\beta_m([m-1]\setminus T)$. 

\begin{cor}
\label{TrivRepSym} 
Let $T$ be a subset of the nontrivial coranks of $\Om_m$.
The multiplicity of the trivial representation in  $\tilde{H}(\Omega_m(T))$ is nonzero if and only if $m-1\notin T$. Moreover, if $T\sbe[m-2]$ then one has the following symmetry:
\[b_m(T)=b_m([m-2]\setminus T).\]
\end{cor}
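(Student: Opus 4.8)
The plan is to prove Corollary~\ref{TrivRepSym} as a direct consequence of Theorem~\ref{TrivRepCom}. By part~(b) of that theorem, $b_m(T)=0$ whenever $m-1\in T$, while by part~(a), $b_m(T)=\#\{\sigma\in\mathfrak{S}_{m-1}:\Des\sigma=T\}\ge 1$ whenever $m-1\notin T$. This immediately gives the first assertion, that the trivial-representation multiplicity is nonzero if and only if $m-1\notin T$. So the only real content is the symmetry $b_m(T)=b_m([m-2]\setminus T)$ for $T\subseteq[m-2]$.

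For the symmetry, by Theorem~\ref{TrivRepCom}(a) it suffices to exhibit a bijection between permutations $\sigma\in\mathfrak{S}_{m-1}$ with $\Des\sigma=T$ and permutations with $\Des\sigma=[m-2]\setminus T$. The standard such bijection is the reverse-complement, or more simply the complement map: send $\sigma=\sigma_1\sigma_2\cdots\sigma_{m-1}$ (in one-line notation) to $\sigma^c$ defined by $\sigma^c_i=m-\sigma_i$. Then $\sigma^c_i>\sigma^c_{i+1}$ exactly when $\sigma_i<\sigma_{i+1}$, so $\Des(\sigma^c)=[m-2]\setminus\Des(\sigma)$. This map is an involution on $\mathfrak{S}_{m-1}$, hence a bijection, and it carries the descent set $T$ to its complement $[m-2]\setminus T$. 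Applying the count from Theorem~\ref{TrivRepCom}(a) on both sides yields $b_m(T)=b_m([m-2]\setminus T)$.

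I anticipate no serious obstacle here; the result is essentially a repackaging of the classical fact that descent-set statistics on $\mathfrak{S}_{m-1}$ are symmetric under complementation. The only point requiring a word of care is the bookkeeping of index sets: one must note that for $\sigma\in\mathfrak{S}_{m-1}$ the possible descents lie in $[m-2]$, so complementation is taken within $[m-2]$, matching the statement's hypothesis $T\subseteq[m-2]$. If desired, one can also remark that the symmetry is consistent with, though not the same as, the sign-twisted Alexander-duality symmetry of Proposition~\ref{Alex*-Om}: the latter says $\tilde H(\Om_m(S))\cong\tilde H(\Om_m(\overline S))\otimes\mathrm{sgn}$, so the trivial-representation multiplicity $b_m(T)$ equals the sign-representation multiplicity in $\beta_m([m-1]\setminus T)$, a statement about a different pair of rank sets (complementing in $[m-1]$ rather than $[m-2]$).

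\begin{proof}
The first assertion is immediate from Theorem~\ref{TrivRepCom}: if $m-1\in T$ then $b_m(T)=0$ by part~(b), and if $m-1\notin T$ then $b_m(T)=\#\{\sigma\in\mathfrak{S}_{m-1}:\Des\sigma=T\}\ge 1$ by part~(a).

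For the symmetry, suppose $T\subseteq[m-2]$. By Theorem~\ref{TrivRepCom}(a),
$$
b_m(T)=\#\{\sigma\in\mathfrak{S}_{m-1}:\Des\sigma=T\}.
$$
Consider the \emph{complementation} map $c:\mathfrak{S}_{m-1}\to\mathfrak{S}_{m-1}$ sending $\sigma=\sigma_1\sigma_2\cdots\sigma_{m-1}$ (in one-line notation) to $\sigma^c$ with $\sigma^c_i=m-\sigma_i$ for $1\le i\le m-1$. This is an involution, and $\sigma^c_i>\sigma^c_{i+1}$ precisely when $\sigma_i<\sigma_{i+1}$, so
$$
\Des(\sigma^c)=[m-2]\setminus\Des(\sigma).
$$
Hence $c$ restricts to a bijection between $\{\sigma:\Des\sigma=T\}$ and $\{\sigma:\Des\sigma=[m-2]\setminus T\}$, and therefore
$$
b_m(T)=\#\{\sigma\in\mathfrak{S}_{m-1}:\Des\sigma=[m-2]\setminus T\}=b_m([m-2]\setminus T),
$$
where the last equality again uses Theorem~\ref{TrivRepCom}(a), valid since $m-1\notin[m-2]\setminus T$.
\end{proof}
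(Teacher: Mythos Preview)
Your proof is correct and follows essentially the same approach as the paper: both deduce the first assertion directly from Theorem~\ref{TrivRepCom} and prove the symmetry via the complementation map $\sigma\mapsto(m-\sigma_i)$ on $\mathfrak{S}_{m-1}$, which sends $\Des\sigma$ to $[m-2]\setminus\Des\sigma$.
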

\begin{proof} The first statement is immediate from the two parts of Theorem~\ref{TrivRepCom}.  

For the second statement, given $\sigma\in \mathfrak{S}_{m-1}$, let $\tau$ be the permutation defined by letting $\tau(i)=m-\sigma(i)$.  
 The map $\sigma\mapsto \tau$  is clearly a bijection.  It also has the property that 
 $\Des\tau=[m-2]\setminus \Des\sigma$.  The result now follows from Part (a) of  Theorem~\ref{TrivRepCom}.  
\end{proof}

Next we consider the action of $\mathfrak{S}_{dm-1}$ on $\Om_{dm}^{(d)}$.  We follow the notation in \cite{su:AIM1994}.  
Let $b_m'(T)$ denote the multiplicity of the trivial representation of $\mathfrak{S}_{dm-1}$ on the corank-selected homology $\tilde{H}(\Om_{dm}^{(d)}(T))$, for every subset $T$ of $[m-1]$. 

From~\eqref{RkSelHom}   
one sees  that  these numbers satisfy the  initial condition  $b_m'(\emptyset)=1$ and the recursion 
\begin{equation}
\label{eqn:triv-rep-rec-restrict}
b_m'(T) +b_m'(T\setminus\{t_r\}) = \delta(T) (t_r+1)\binom{m-1}{t_r},
\end{equation}
since the restriction of $h_{d\alpha}$ to $\mathfrak{S}_{dm-1}$ is 
$\sum_{i=1}^{\ell(\alpha)}  h_{d\alpha_i-1}\,(\prod_{j\ne i} h_{d\alpha_j})$, and the multiplicity of the trivial representation in each term of the previous sum is 1. 
Again these numbers are independent of $d$, so that $b_m'(T)$ is also the multiplicity of the trivial representation of $\mathfrak{S}_{m-1}$ on the corank-selected homology $\tilde{H}(\Om_{m}(T))$, for every subset $T$ of $[m-1]$. 
Thus, as in the case of $b_m(T)$, it suffices to analyse the case $d=1$.

From ~\eqref{alpha-beta} and Proposition~\ref{MaxChains}, we see that the numbers $\{b_m'(T):T\subseteq [m-1]\}$  give the following refinement of the factorials $m!$
\begin{equation}\label{eqn:triv-rep-restrict-refinement}
\sum_{T\subseteq [m-1]} b_m'(T)  = m!
\end{equation}

Let $\gamma_m(T)=b_m'(T)-b_m(T)$. By Frobenius reciprocity, $\gamma_m(T)$ is the multiplicity of the irreducible indexed by $(m-1,1)$ in $\tilde{H}(\Om_{m}(T))$, and hence it is nonnegative.  In other words, $b_m'(T)\ge b_m(T)$. From~\eqref{eqn:triv-rep-rec} and~\eqref{eqn:triv-rep-rec-restrict}  we have the recurrence 
\begin{equation}\label{eqn:refl-rep-rec}
\gamma_m(T) +\gamma_m(T\setminus\{t_r\}) =\delta(T)\, t_r\binom{m-1}{t_r},
\end{equation}
with $\gamma_m(\emptyset)=0.$
Since $ t_r\binom{m-1}{t_r}=
(m-1) \binom{m-2}{t_r-1}$, this shows that $\gamma_m(T)$ is divisible by $(m-1)$ for all $T$.  

In \cite{sta:gap} and \cite[\S 4]{su:AIM1994}, it was shown that for the rank-selected homology of the unordered partition lattice $\Pi_n$, the multiplicities of the trivial representation of $\mathfrak{S}_n$ and  $\mathfrak{S}_{n-1}$  refine the Euler numbers $E_{n-1}$ and $E_n$, respectively, as sums over subsets of $[n-2]$.  
Thus equations~\eqref{eqn:triv-rep-refinement} and~\eqref{eqn:triv-rep-restrict-refinement} are the corresponding  analogues for $\Om_m$, respectively refining the factorials $(m-1)!$ and $m!$ by subsets of $[m-1]$.  

For $\Pi_n$ it was conjectured in \cite[Conjectures, p. 289]{su:AIM1994}, and proved in \cite[Theorems 2.1, 2.2]{HH:2003}, that the restricted multiplicity $b_n'(S)$ is always positive. Table~\ref{tab:trivpin-ord}
 shows that this is not true of the restricted multiplicities $b_m'(T)$ for $\Om_m$.  In fact, the data suggests that 
$b_m'(T)=0$ if and only if 
$T$ contains both the coranks $m-2, m-1$ and this is the case.
Indeed, we have the following theorem.  
\begin{rem} Stanley's theorem on barycentric rank-selection gives a different formulation of the result below.  We obtained the formulas below using the Whitney homology recurrence of Theorem~\ref{rksel-Omd}, but the proof is simpler using Stanley's theorem.
\end{rem}

\begin{table}[htbp!]
\begin{center}
\begin{tabular}{|c|c|c|}
\hline
$m, T$ & $\mathbf{b_m(T)}$& $\mathbf{b_m'(T)}$ \\
\hline
$T\subseteq [m-1]$ &  &    \\
\hline
\hline
    3, $\emptyset$ & 1 & 1\\
    3, \{1\} & 1 & 3 \\
    3, \{2\} & 0 & 2 \\
    3, \{1,2\} & 0 & 0\\[5pt]
    \hline\hline
    4, $\emptyset$ & 1 & 1\\
    4, \{1\} & 2 & 5 \\
    4, \{2\} & 2 & 8 \\
    4, \{3\} & 0 & 3\\
    4, \{1,2\} & 1& 4 \\
    4, \{1,3\} & 0 & 3 \\
    4, \{2,3\} & 0 & 0 \\
    4, \{1,2,3\} & 0 & 0 \\[5pt]
    \hline\hline
    5, $\emptyset$ & 1 & 1\\
    5, \{1\} & 3 & 7 \\
    5, \{2\} & 5 & 17 \\
    5, \{3\} & 3 & 15 \\
    5, \{4\} & 0 & 4 \\
    5, \{1,2\} & 3 & 11 \\
    5, \{1,3\} & 5 & 25 \\
    5, \{1,4\} & 0 & 8 \\
    5, \{2,3\} & 3 & 15\\
    5, \{2,4\} & 0 & 8 \\
    5, \{3,4\} & 0 & 0 \\
    5, \{1,2,3\} & 1 & 5 \\
    5, \{1,2,4\} & 0 & 4 \\
    5, \{1,3,4\} & 0 & 0 \\
    5, \{2,3,4\} & 0 & 0 \\
    5, \{1,2,3,4\} & 0 & 0 \\
    \hline
\end{tabular}
\quad 
\begin{tabular}{|c|c|c|c|}
\hline
$m, T$ & $\mathbf{b_m(T)}$& $\mathbf{b_m'(T)}$ \\
\hline
$T\subseteq [m-1]$ &  &    \\
\hline
\hline
    6, $\emptyset$ & 1 & 1\\
    6, \{1\}   & 4 & 9 \\
    6, \{2\}   & 9 & 29 \\
    6, \{3\}   & 9 & 39 \\
    6, \{4\}   & 4 & 24 \\
    6, \{5\}   & 0 & 5 \\
    6, \{1,2\} & 6 & 21 \\
    6, \{1,3\} & 16 & 71 \\
    6, \{1,4\} & 11 & 66 \\
    6, \{1,5\} & 0 & 15 \\
    6, \{2,3\} & 11 & 51 \\
    6, \{2,4\} & 16 & 96 \\
    6, \{2,5\} & 0 & 25 \\
    6, \{3,4\} & 6 & 36 \\
    6, \{3,5\} & 0 & 15 \\
    6, \{4,5\} & 0 & 0 \\
    6, \{1,2,3\} & 4 & 19 \\
    6, \{1,2,4\} & 9 & 54 \\
    6, \{1,2,5\} & 0 & 15 \\
    6, \{1,3,4\} & 9 & 54 \\
    6, \{1,3,5\} & 0 &  25\\
    6, \{1,4,5\} & 0 & 0 \\
    6, \{2,3,4\} & 4  & 24  \\
    6, \{2,3,5\} & 0 & 15 \\
    6, \{2,4,5\} & 0 & 0 \\
    6, \{3,4,5\} & 0 & 0 \\
    6, \{1,2,3,4\} & 1  & 6 \\
    6, \{1,2,3,5\} & 0 & 5 \\
    6, \{1,2,4,5\} & 0 & 0 \\
    6, \{1,3,4,5\} & 0 &  0\\
    6, \{2,3,4,5\} & 0 & 0 \\
    6, \{1,2,3,4,5\} & 0 & 0 \\
    \hline
\end{tabular}
\end{center}
\caption{The multiplicities $b_m(T)$ and $b_m'(T)$ by corank subsets $T$, for $3\le m\le 6$.\\ The second column adds up to $(m-1)!$ and the third column to $m!$. }
\label{tab:trivpin-ord}
\end{table}

\begin{thm}\label{thm:triv-restrict-zeros}  Let $T$ be a set of coranks, $T\subseteq [m-1]$.Then $b_m'(T)=0$  if and only if $T$ contains both the coranks $m-2, m-1$.  
More precisely, we have the following: 
\begin{enumerate}
\item[(a)] If $\{m-2, m-1\}\subseteq T$, then $b_m'(T)=0.$
\item[(b)] If $m-1\in T$, $m-2\notin T$, then $b_m'(T)\ge 1$.  In fact 
$$
b_m'(T)=\ga_m(T)=(m-1) b_{m-1}(T\setminus\{m-1\}).
$$
\item[(c)] If $m-1\notin T$, then $b_m'(T)\ge 1$.  In fact,
$$
b_m'(T)=mb_m(T)-(m-1)b_{m-1}(T).
$$
\end{enumerate}
\end{thm}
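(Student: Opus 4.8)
The plan is to obtain an exact formula for $b_m'(T)$ by feeding the corank-selected homology of $\Om_{dm}^{(d)}$ through Corollary~\ref{d-div-Om}, and then to read off parts (a)--(c) via a short count of permutations of $[m]$. As already noted, $b_m'(T)$ is independent of $d$, so throughout I take $d=1$ and work in $\Om_m$. By Corollary~\ref{d-div-Om} with $d=1$, together with the identification of $\eta_i(\cB_m)$ with the Foulkes module $\bigoplus_{|U|=i}S^{\rho_U}$ from Theorem~\ref{BooleanRs}, one has $\beta_{\Om_m}(T)=\bigoplus_{i=0}^{m-1}c_{i,m}(T)\,\eta_i(\cB_m)$. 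Restricting to $\fS_{m-1}$ and taking the multiplicity of the trivial module, and using that $\Ind_{\fS_{m-1}}^{\fS_m}$ of the trivial module has Frobenius characteristic $h_{m-1}h_1=s_{(m)}+s_{(m-1,1)}$, Frobenius reciprocity gives
$$
b_m'(T)=\sum_{i=0}^{m-1}c_{i,m}(T)\sum_{\substack{U\subseteq[m-1]\\ |U|=i}}\big\langle s_{\rho_U},\,h_{m-1}h_1\big\rangle .
$$
The inner pairing counts column-strict fillings of the ribbon $\rho_U$ by $m-1$ ones and one two; since $\rho_U$ has exactly $|U|$ vertical dominoes, there is one such filling when $|U|\le1$ and none otherwise. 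Hence the inner sums equal $1,\,m-1,\,0,\,0,\dots$, so $b_m'(T)=c_{0,m}(T)+(m-1)\,c_{1,m}(T)$. Since $b_m(T)=c_{0,m}(T)$ by~\eqref{bc}, this is exactly $\gamma_m(T)=(m-1)c_{1,m}(T)$, consistent with the divisibility remark preceding the theorem.

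Next I would evaluate $c_{1,m}(T)=\#\{\sigma\in\fS_m:\sigma(m)=m-1,\ \Des\sigma=T\}$ by cases on membership of $m-1$ and $m-2$ in $T$. If $m-1\in T$, then $\sigma(m-1)=m$ is forced; deleting the last two entries of $\sigma$ identifies these $\sigma$ with the permutations $\rho\in\fS_{m-2}$ satisfying $\Des\rho=T\setminus\{m-1\}$ when $m-2\notin T$ (and $c_{1,m}(T)=0$ when $m-2\in T$), and by Theorem~\ref{TrivRepCom}(a) applied to $\Om_{m-1}$ this number is $b_{m-1}(T\setminus\{m-1\})\ge1$. If $m-1\notin T$, I would delete the last entry (which is $m-1$) and standardize, getting a bijection with $\{\rho'\in\fS_{m-1}:\Des\rho'=T,\ \rho'(m-1)\ne m-1\}$; removing the last constraint and subtracting the permutations with $\rho'(m-1)=m-1$ (which, after one more deletion, number $b_{m-1}(T)$ --- true also when $m-2\in T$, both sides then $0$) gives $c_{1,m}(T)=b_m(T)-b_{m-1}(T)$.

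Now I would substitute into the formula from the first step. If $m-1\notin T$ we obtain $b_m'(T)=m\,b_m(T)-(m-1)b_{m-1}(T)$, which is $\ge b_m(T)\ge1$ by Theorem~\ref{TrivRepCom}(a): this is (c). If $m-1\in T$ then $b_m(T)=0$ by Theorem~\ref{TrivRepCom}(b), so $b_m'(T)=(m-1)c_{1,m}(T)=\gamma_m(T)$; when moreover $m-2\notin T$ this equals $(m-1)b_{m-1}(T\setminus\{m-1\})\ge m-1\ge1$, giving (b), and when $m-2\in T$ it is $0$, giving (a). For the converse direction of the equivalence, $b_m'(T)=0$ forces $c_{0,m}(T)=b_m(T)=0$ and $c_{1,m}(T)=0$; the first yields $m-1\in T$ (Theorem~\ref{TrivRepCom}(a)), and then, by the case $m-1\in T$, $m-2\notin T$ above, $c_{1,m}(T)$ would be positive unless also $m-2\in T$, so $\{m-2,m-1\}\subseteq T$.

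I do not expect any deep difficulty, but the step that must be carried out carefully is the ribbon pairing $\langle s_{\rho_U},h_{m-1}h_1\rangle=[\,|U|\le1\,]$ --- equivalently, the fact that a ribbon has a column-strict filling of content $(m-1,1)$ precisely when it has at most one vertical step --- since the coefficient $m-1$ multiplying $c_{1,m}(T)$ comes from here and propagates into all three parts (in particular into the factor $(m-1)$ appearing in (b) and (c)). The permutation bijections in the second step are routine, though they require attention to the standardization used after deleting the last letter, and to the degenerate small values of $m$.
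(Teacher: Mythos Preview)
Your proof is correct and follows essentially the same route as the paper's. Both arguments feed Corollary~\ref{d-div-Om} (with $d=1$) through the ribbon decomposition of $\eta_i(\cB_m)$ to obtain $b_m'(T)=c_{0,m}(T)+(m-1)\,c_{1,m}(T)$; the paper reaches this by writing $b_m'(T)=b_m(T)+\gamma_m(T)$ and computing the multiplicity of the $(m-1,1)$-irreducible in each ribbon, while you obtain it in one stroke via Frobenius reciprocity and the pairing $\langle s_{\rho_U},h_{m-1}h_1\rangle=[\,|U|\le 1\,]$. The subsequent case analysis of $c_{1,m}(T)$ is also the same in substance: for $m-1\in T$ both proofs force $\sigma(m-1)=m$, $\sigma(m)=m-1$ and reduce to $\fS_{m-2}$; for $m-1\notin T$ the paper proves the identity $c_{1,m}(T)+b_{m-1}(T)=b_m(T)$ by partitioning the permutations counted by $b_m(T)$ according to whether $\sigma(m-1)=m-1$, whereas you delete the last letter and standardize---a different but equivalent bijection yielding the same identity.
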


\begin{proof} 
Since 
\beq\label{b'gab}
b_m'(T)= \gamma_m(T)+b_m(T)
\eeq
we wish to determine $\gamma_m(T)$. 
This is the multiplicity of the irreducible indexed by $(m-1,1)$ in the right-hand side of Proposition~\ref{prop:d-div-Om} when $d=1$
\beq
\label{be_Om_m}
\beta_{\Om_{m}}(T)=\bigoplus_{i=0}^{m-1} c_{i,m}(T)\, \eta_i(\cB_{m}).
\eeq
The multiplicity of $(m-1,1)$ in the irreducible indexed by a ribbon $\rho(T)$ is nonzero if and only if the ribbon has only two rows, in which case it is 1. There are $m-1$ such ribbons of size $m$, and hence the multiplicity of $(m-1,1)$ in~\eqref{be_Om_m}
equals 
$$
(m-1)\,c_{1,m}(T)=(m-1)
 \#\{\sigma\in \mathfrak{S}_m: \sigma(m)=m-1\text{ and } \Des\sigma=T\},
 $$
and
\beq
\label{b'bc}
b_m'(T)=b_{m}(T)+ (m-1)\,c_{1,m}(T).
\eeq

If $m-1\in T$ then, by Theorem~\ref{TrivRepCom}  (b), we have $b_m(T)=0$.  We also see that when $m-1\in T$ and $\sigma\in \mathfrak{S}_m$ we have 
\[ \sigma(m)=m-1\text{ and } \Des\sigma=T\iff 
\sigma(m)=m-1, \sigma(m-1)=m \text{ and } \Des\sigma=T.
\]
Hence 
$$
c_{1,m}=\# \{ \sigma\in \mathfrak{S}_{m-2}:  \Des\sigma=T\setminus\{m-1\}\} .
$$
So, $c_{1,m}= 0$ if and only if $m-2\in T$.
Substituting all this information into~\eqref{b'gab} proves both (a) and (b).

For (c), suppose $m-1\notin T$.  It follows that $T\subseteq [m-2]$.  We claim that 
\beq
\label{cbb}
c_{1,m}(T) + b_{m-1}(T)
= b_m(T).   
\eeq
First note that if $\si$ is counted by $c_{1,m}$ then $\si(m)=m-1>\si(m-1)$ since $m-1\notin T$.
So, by definition, $c_{1,m}$ 
 counts the number of bijections $\tau:\{1, \ldots,m-2\} \rightarrow 
\{1, \ldots, m\}\setminus \{\sigma(m-1), m-1\}$ such that $\tau(j)>\tau(j+1)$ if and only if $j\in T$. 

On the other hand, by equation~\eqref{bc}, $b_m(T)$ counts the number of permutations $\sigma$ in $\mathfrak{S}_m$ with $\si(m)=m$ and $\Des\si=T$.   This set can be decomposed according to the image of $m-1$. By the preceding paragraph, 
the permutations with $\si(m-1)<m-1$ are counted by $c_{1,m}(T)$.  
If $\si(m-1)=m-1$ then the permutations are counted by $b_{m-1}$.  This completes the proof of~\eqref{cbb}.

Combining~\eqref{b'bc} and~\eqref{cbb} gives the equation in part  (c) of the theorem.  As far as the inequality, we have $b_m'(T)\ge b_m(T)$
from~\eqref{b'bc}.  Also, by Theorem~\ref{TrivRepCom} (a), we have $b_m(T)\ge1$ in this case.  Combining the two inequalities finishes the proof.
\end{proof}

We now give the promised topological explanation for Theorem~\ref{TrivRepCom}.  The reader may have noticed that the multiplicities $b_m(T)$, $T\subseteq [m-1]$,  coincide with the rank-selected Betti numbers for the Boolean lattice $\cB_{m-1}$. This is no accident, as we now explain.  We refer to \cite{HH:2003} for some background on quotient complexes and to \cite[\S 3.13]{sta:ec1} for flag $f$- and $h$-vectors.  

Recall that we write $P^*$ for the dual of the poset $P$.  If $P$ has a $\zh$ and a $\oh$ and $G$ is a group of automorphisms of $P$, the quotient complex $\Delta(P)/G$ consists of  the $G$-orbits of the faces of $\Delta(P)$, i.e., the $G$-orbits of the chains of the proper part $\Pb$. 
See, e.g.,  \cite[p. 522]{HH:2003}.   Furthermore, the multiplicity of the trivial representation of $G$ in the rank-selected homology of $P(T)$ for a rank-set $T$ is given by the flag $h$-vector $h_T(\Delta(P)/G)$ of the quotient complex \cite[p. 523]{HH:2003}.

Define the orbit poset $P/G$ as follows.  Its elements are  the $G$-orbits $\mathcal{O}_x$ of the elements $x$ of $P$, with order relation  $\mathcal{O}_x<\mathcal{O}_y$ in $P/G$ if there exist $x'\in \mathcal{O}_x$ and $y'\in \mathcal{O}_y$ 
such that $x'<y'$
in $P$.  
The quotient complex $\Delta(P)/G$ does not usually coincide with the order complex $\Delta(P/G)$ of the orbit poset $P/G$. In general the quotient complex may not even be a simplicial complex; an example is the ordinary partition lattice $\Pi_n$ with the $\fS_n$-action  \cite[p. 522]{HH:2003}.
But when $P=\Om_{dm}$ the quotient complex is not only a simplicial complex, but  also equals the order complex of the orbit poset, as we show in Proposition~\ref{DeOm-quot} below.

Let $ [\cB_{m-1}]$ denote the face lattice of the  $(m-2)$-dimensional ball on $m-1$ vertices, or equivalently the $(m-2)$-dimensional simplex on $(m-1)$ vertices, with an artificially appended $\hat 1$.  Hence, as a poset, $ [\cB_{m-1}]$ consists of all $2^{m-1}$ subsets of $[m-1]$, with an additional $\hat 1$ appended.

The order complex of $ [\cB_{m-1}]$ is the barycentric subdivision of the  $(m-2)$-dimensional simplex. It is therefore contractible. We record the following observations.
\begin{enumerate}
\item 
 $ [\cB_{m-1}]$ is a ranked Cohen-Macaulay poset, of rank $m$. 
 \item 
 The proper part of $ [\cB_{m-1}]$ then has a unique maximal element, the set $[m-1]$ of all $m-1$ elements, and hence the order complex of $ [\cB_{m-1}]$ is contractible.
 \item For the same reason,  the order complex of any rank-selected subposet  $ [\cB_{m-1}](T)$ where $m-1\in T$ is also contractible. 
 \item The rank-selected subposet $ [\cB_{m-1}](T)$ for a rank-set $T$  \emph{not} containing rank  $m-1$ coincides with the rank-selected subposet 
$\cB_{m-1}(T)$ of $\cB_{m-1}$.  In particular, when $T=[m-2]$, $ [\cB_{m-1}](T)$ 
coincides with the Boolean algebra $\cB_{m-1}$.
\end{enumerate}
As an example we compute  the quotient complex of $\Delta(\Om_3)$ by the action of $\mathfrak{S}_3$. 
Figure~\ref{Om3}  illustrates $\Om_3$. The 6 ordered partitions at corank 1 
are of the form $(\{a,b\},c)$ or $(a, \{b,c\})$ where $\{a,b,c\}=[3]$, and hence fall into two $\mathfrak{S}_3$-orbits that we label $\{1\}$ and $\{2\}$ respectively. The atoms  at corank 2 constitute a single orbit that we label $\{1,2\}$. The twelve chains between coranks 1 and 2 also fall into two orbits, corresponding to the two edges in Figure~\ref{fig:quot-complex-Om3}.  Hence the quotient complex looks like the one-dimensional simplicial complex shown in Figure~\ref{fig:quot-complex-Om3}. This is precisely the order complex of the (dual of the) poset $[\cB_2]$.  Note that coranks in $\Om_3$ correspond to ranks in $[\cB_2]$.

\begin{figure}
        \centering
        \begin{tikzpicture}
        \node (a) at (-2,-2) {$\{1\}$};
        \node (b) at (2,-2) {$\{2\}$};
        \node (min) at (0,-2) {$\{1,2\}$};
        \draw (a)--(min)--(b);
    \end{tikzpicture}
\caption{The quotient complex $\Delta(\Om_3)/\mathfrak{S}_3$}
 \label{fig:quot-complex-Om3}
 \end{figure}

 The proof of the next result highlights  the precise mapping between $\mathfrak{S}_{dm}$-orbits of chains in the dual of $\Om_{dm}$, and chains in $[\cB_{m-1}]$.
It will be convenient to use compositions $\al=(\al_1,\ldots,\al_r)$ of $m$.
Let $C_m$ denote the set of compositions of $m$.  For two compositions $\alpha, \beta \in C_m$, we say $\beta$ is a refinement of $\alpha=(\alpha_1, \ldots, \alpha_r)$ if $\beta$ is obtained from $\alpha$ by replacing each $\alpha_i$ with a composition of $\alpha_i$.  For example, the composition $(1,2,1,1,3,2)$ is a  refinement of the composition $(1,4,5)$.  With respect to this order,  $C_m$ is a poset with minimal element ${(1^m)}$, where $(1^m)$ is the composition of all 1's, and maximal element $(m)$.  There is a well-known bijection (see \cite[Theorem 1.7.1]{sag:aoc}) mapping compositions with $r$ parts  to subsets of $[m-1]$ of size $r-1$, namely 
\begin{equation}\label{eqn:comp-to-subsets}
\alpha=(\alpha_1, \ldots,\alpha_r)\mapsto \{\alpha_1, \alpha_1+\alpha_2, 
\ldots, \sum_{j=1}^{r-1} \alpha_j\}.
\end{equation}
 This makes the poset $C_m$  isomorphic to the dual of the Boolean algebra $\cB_{m-1}$, with the minimal composition $(1^m)$ mapping to the maximal subset $[m-1]$, and the maximal composition $(m)$ mapping to the empty set.

Finally, define $[C_m]$ to be the  poset $C_m$ augmented with an additional $\hat 0$ appended, i.e., 
$$
[C_m]=C_m\cup\{\hat 0\}.
$$
Thus $[C_m]$ is poset-isomorphic to the dual of $[\cB_{m-1}]$ as defined above.

\begin{prop}
\label{DeOm-quot} 
For $m\ge0$ and $d\ge1$ we have the following.
\ben
\item[(a)] There is an order-preserving isomorphism between the faces of the  quotient complex 
$\Delta(\Om_{dm}) /\mathfrak{S}_{dm}$ and the faces of  the order complex of the augmented composition poset $ [C_m]$.  
\item[(b)] The quotient complex 
$\Delta(\Om_{dm}^*) /\mathfrak{S}_{dm}$ is isomorphic  to the  order complex of $ [\cB_{m-1}]$. In particular, it is contractible.
\item[(c)] The quotient complex  $\Delta(\Om_{dm}) /\mathfrak{S}_{dm}$ coincides with the order complex $\Delta(\Om_{dm} /\mathfrak{S}_{dm})$ of the orbit poset.
\een

\end{prop}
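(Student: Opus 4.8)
The plan is to base everything on a description of the $\mathfrak{S}_{dm}$-orbits of the chains of $\Om_{dm}^{(d)}$. First I would record the orbits of \emph{elements}. The elements $\zh$ and $\oh$ are $\mathfrak{S}_{dm}$-fixed, and since $\mathfrak{S}_{dm}$ acts transitively (by relabelling) on the ordered partitions of any prescribed type, the orbit of an ordered partition $\om$ is determined precisely by $\type\om$; as $\om\comp_d[dm]$ forces $\type\om=d\al$ for a unique composition $\al$ of $m$, these orbits are indexed by $C_m$. Writing $\mathcal{O}_\al$ for the orbit of type $d\al$, one checks that $\mathcal{O}_\al<\mathcal{O}_\beta$ in the orbit poset iff some ordered partition of type $d\al$ lies below one of type $d\beta$, iff $d\beta$ is a coarsening of $d\al$, iff $\al<\beta$ in $C_m$; combined with the facts that $\mathcal{O}_\oh=\mathcal{O}_{(m)}$ is the top and $\zh$ the bottom, this gives a poset isomorphism $\Om_{dm}^{(d)}/\mathfrak{S}_{dm}\iso[C_m]$, and hence $\Delta(\Om_{dm}^{(d)}/\mathfrak{S}_{dm})\iso\Delta([C_m])$ (the order complex appearing in part (c)).

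The heart of the argument is a reconstruction lemma: a chain $\mathbf c\colon\om^{(1)}<\dots<\om^{(j)}$ in the proper part of $\Om_{dm}^{(d)}$ is determined by its finest term $\om^{(1)}$ together with the sequence of types $(\type\om^{(1)},\dots,\type\om^{(j)})$. Indeed, passing from $\om^{(t)}$ up to the coarser $\om^{(t+1)}$ merges only \emph{adjacent} blocks, so once $\type\om^{(t+1)}$ -- necessarily a coarsening of $\type\om^{(t)}$ -- is prescribed, there is a \emph{unique} way to group the positive parts of $\type\om^{(t)}$ into consecutive runs with the required sums, and hence a unique $\om^{(t+1)}\ge\om^{(t)}$ of that type; iterating recovers all of $\mathbf c$ from $\om^{(1)}$ and the type sequence. (This rigidity is exactly what fails for the unordered lattice $\Pi_n$.) Granting the lemma, I would show that $[\mathbf c]\mapsto(\type\om^{(1)}/d<\dots<\type\om^{(j)}/d)$ is an order-preserving bijection from $\mathfrak{S}_{dm}$-orbits of chains of the proper part of $\Om_{dm}^{(d)}$ onto chains of $C_m\setminus\{(m)\}$, i.e.\ onto faces of $\Delta([C_m])$. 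It is well defined because types are $\mathfrak{S}_{dm}$-invariant; surjective, because a given chain of compositions is realized by choosing any $\om^{(1)}$ of type $d\al^{(1)}$ (it has at least two blocks, so it lies in the proper part) and coarsening as dictated; and injective, because two chains with the same type sequence have finest terms of a common type, hence are conjugate by some $\sigma\in\mathfrak{S}_{dm}$ carrying one finest term to the other, so that by the lemma $\sigma$ carries the whole first chain onto the whole second. The empty chain corresponds to the empty chain, and the bijection manifestly respects passage to subchains, so this establishes (a). I expect verifying the reconstruction lemma, and with it the injectivity step, to be the main point of the proof, although it is clean once the adjacency constraint is exploited.

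Part (b) is then essentially formal. For any bounded poset $Q$ one has $\Delta(Q^*)=\Delta(Q)$, with the identical automorphism action, because a chain of $Q^*$ is simply a chain of $Q$. Hence $\Delta\bigl((\Om_{dm}^{(d)})^*\bigr)/\mathfrak{S}_{dm}=\Delta(\Om_{dm}^{(d)})/\mathfrak{S}_{dm}$, which by part (a) is isomorphic to $\Delta([C_m])$; and applying the same principle to $Q=[C_m]$ together with the fact (recorded just before the proposition, via the composition/subset bijection~\eqref{eqn:comp-to-subsets}) that $[C_m]^*\iso[\cB_{m-1}]$ gives $\Delta([C_m])=\Delta([C_m]^*)\iso\Delta([\cB_{m-1}])$. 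Contractibility is immediate: the proper part of $[\cB_{m-1}]$ has the top element $[m-1]$ (lying below the artificial $\oh$), so $\Delta([\cB_{m-1}])$ is a cone with that apex.

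For part (c) it remains to see that $\Delta(\Om_{dm}^{(d)})/\mathfrak{S}_{dm}$ is a genuine simplicial complex and that it coincides with $\Delta(\Om_{dm}^{(d)}/\mathfrak{S}_{dm})$ under the natural map. If $\sigma\in\mathfrak{S}_{dm}$ stabilizes a chain $\{\om^{(1)},\dots,\om^{(j)}\}$ setwise then, since the $\om^{(t)}$ have pairwise distinct numbers of blocks while $\sigma$ preserves block count, $\sigma$ fixes each $\om^{(t)}$; thus no simplex is folded onto a lower-dimensional face and the quotient is a simplicial complex. The orbit map $x\mapsto\mathcal{O}_x$ then sends the chain $\om^{(1)}<\dots<\om^{(j)}$ to the chain $\mathcal{O}_{\om^{(1)}}<\dots<\mathcal{O}_{\om^{(j)}}$ of $j$ distinct elements of the orbit poset, inducing a simplicial map $\Delta(\Om_{dm}^{(d)})/\mathfrak{S}_{dm}\to\Delta(\Om_{dm}^{(d)}/\mathfrak{S}_{dm})$; under the bijection of part (a) and the isomorphism of the first paragraph, both complexes are identified with $\Delta([C_m])$ and this map becomes the identity on faces, so it is an isomorphism and the two complexes coincide.
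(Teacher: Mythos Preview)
Your argument is correct and follows essentially the same route as the paper: both send a chain to its sequence of composition types and verify that this induces a bijection between $\mathfrak{S}_{dm}$-orbits of chains and chains in $C_m\setminus\{(m)\}$, with the key ingredient being that a chain is determined by its bottom element together with its type sequence. Your presentation is slightly more explicit in isolating the reconstruction lemma and in verifying that the quotient is a genuine simplicial complex (no folding), but the content is the same.
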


\begin{proof} 
(a) We will show that the chains of the quotient complex $\Delta(\Om_{dm}) /\mathfrak{S}_{dm}$ are in order-preserving bijection with the chains in $C_m\setminus\{[m]\}$, the proper part of $[C_m]$.
We will do this when $d=1$.  The proof carries over to arbitrary $d$ almost verbatim, since compositions of $dm$ with all parts divisible by $d$ are in bijection with compositions of $m$.

Let $c=\omega_1<\omega_2<\cdots <\omega_r$ be a chain in the proper part of $\Om_m$, so $\omega_r\ne [m]$.  
Let $\omega_i=(B^i_1, \ldots, B^i_{t_i})$, with block sizes $b^i_1, \ldots, b^i_{t_i}$. 
Then the composition type $\beta^i=(b^i_1, \ldots, b^i_{t_i})$ of $\omega_i$ is a composition  of $m$ 
with $\be^i\neq [m]$ 
for all $i$, i.e., an element of $C_m\setminus\{[m]\}$.  Moreover the definition of the order relation in $\Om_m$ implies that 
$\beta^i< \beta^{i+1}$ in the poset of compositions $C_m$.  Note that if $\omega_1$ is an atom,  then its composition type is $\beta^1=(1^m)$. Now recall that the order complex of a poset consists of simplices which are chains in the \emph{proper} part of the poset.  Hence we have a surjection  $f: \Delta(\Om_m) \longrightarrow \Delta([C_m]).$  
Note that $f$ can be viewed as an extension of the bijection in the proof of Lemma~\ref{OmGen-d} (d) to chains, where $B_{m-1}$ takes the place of $C_m$.

This is clearly an order-preserving (and rank-preserving) surjection, and the equivalence classes under this surjection are precisely the $\fS_m$-orbits of $\Delta(\Om_m)$. 
This can be seen by observing that 
an orbit representative of a chain $c$ is uniquely determined, for example,  by writing the elements in each block of the bottom element of the chain in increasing order.  

It is best to illustrate this with an example.  In $\Om_9$, consider the chain 
$$
c=(1,2, 345,6, 7, 89)< (12, 3456, 789) < (123456, 789).
$$
It maps to the following chain of compositions in $[C_9]$
\[f(c)= (1,1,3,1,1,2)< (2,4,3)<(6, 3)\]
Any other chain in the preimage of the  chain $f(c)$  has the form 
$$
(a_1, a_2, a_3a_4a_5, a_6, a_7, a_8a_9)
<(a_1a_2, a_3a_4a_5, a_6, a_7a_8a_9)
<(a_1a_2a_3a_4a_5a_6, a_7a_8a_9).
$$
Hence the permutation $\sigma$ defined by $\sigma(i)=a_i$ takes the chain 
$c$ to the chain $c'$ in $\Delta(\Om_9)$.

We have shown that the preimage of the chain $f(c)$ is in fact the $\fS_m$-orbit of the chain $c$.
It follows that the quotient complex $\Delta(\Om_m)/\fS_m$ is homotopy equivalent to $\Delta([C_m])$. 

\medskip

(b) Combining (a) with the bijection~\eqref{eqn:comp-to-subsets} between $C_m$ and $\cB_{m-1}$, it follows that coranks in $\Om_m$ correspond to ranks in $[\cB_{m-1}]$.  Hence we have the poset isomorphism
\[ \Delta(\Om_m^*)/\fS_m\iso\Delta([\cB_{m-1})].\]

\medskip

(c)
The  demonstration of (a)  shows that the orbit poset $\Om_{dm}/\fS_{dm}$ coincides with the composition poset $[C_m]$.   
This finishes the proof of the proposition.
\end{proof}


By the discussion preceding the proposition,  this implies that the  multiplicities $b_m(T)$, $T\subseteq [m-1]$, of the trivial representation in 
the homology of the \emph{corank}-selected subposet $\Om_{dm}(T)$  coincide with the  flag $h$-vector (indexed by \emph{ranks}) of $\Delta([\cB_{m-1}])$. But, from Theorem~\ref{BooleanRs}, 
when $m-1\notin T$, the latter are precisely the rank-selected invariants  for the Boolean lattice $\cB_{m-1}$.  Again, see \cite[\S 3.13]{sta:ec1} for details.  Similarly, when $m-1\in T$, the multiplicity $b_m(T)$ is zero because the corresponding rank-selected subposet in $[\cB_{m-1}]$ is contractible. 
This concludes our topological explanation of Theorem~\ref{TrivRepCom}. 

Finally we remark that Theorem~\ref{thm:triv-restrict-zeros} determines the flag $h$-vector   for the quotient complex $\Delta(\Om_{m})/(\mathfrak{S}_{m-1}\times \mathfrak{S}_1)$. 
We have not investigated the structure of this quotient complex; it would be interesting to do so.    See \cite[\S 4]{HH:2003} for the analogous analysis for $\Pi_n$.


\section{Block sizes with remainder $1$}
\label{bsr}

Rather than just considering ordered set partitions where all blocks have size divisible by $d$, one could look at those where each block size has remainder $1$ when divided by $d$.  
Such ordered partitions of $n$ are partially ordered as a subset of $\Om_n$.  Adding a $\zh$, we get the poset
$$
\Omb_n^{(d)} =
\{\zh\}\ \uplus\
\{\om=(B_1,\ldots,B_k) \comp [n]\ 
\mid\ \#B_i \Cong 1\ (\Mod d) \text{ for all $1\le i \le k$}\}.
$$
Note that $\Omb_n^{(1)} = \Om_n$.

In this section, we will analyze this poset.  
To do this, we need a generalization of the Boolean algebra.
A {\em run} in a set $S\sbe[n]$ is a maximum sequence of consecutive integers.  For example, the runs in 
$S=\{2,3,4,6,8,9\}$ are $2,3,4$; $6$; and $8,9$.
Let
$$
\cB_n^{(d)}=\{S\sbe [n] \mid 
\text{every run of $S$ has length divisible by $d$}\},
$$
ordered by inclusion of sets.

\begin{thm}
\label{Omn1dGen}
The poset $\Omb_{dm+1}^{(d)}$ satisfies the following.
\ben
\item[(a)]  It has $\oh =([dm+1])$.
\item[(b)]  Its atoms are the $\om$ with $\#B=1$ for all blocks $B$ of $\om$.  
\item[(c)]  Every $\om\in\Omb_{dm+1}^{(d)}$ has $dk+1$ blocks for some $k\ge0$.
\item[(d)]
It is ranked.  The rank and corank of  $\om=(B_1,\ldots,B_{dk+1})$ are  
$$
\rk\om = m-k+1 \qmq{and} \crk\om = k.
$$
In particular
$$
\rk \Omb_{dm+1}^{(d)} = m + 1.
$$
\item[(e)] For any  two ordered partitions $\psi,\om\in\Omb_{dm+1}^{(d)}$, $\psi<\om$, we have
$$
[\psi,\om]\iso \cB_{\rk(\psi,\om)}^{(d)}.
$$

\item[(f)] The poset $\Omb_{dm+1}^{(d)}$ is not a lattice for $d\ge 2$.
\een
\end{thm}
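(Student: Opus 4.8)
The plan is to exhibit, for every $d\ge 2$ and $m\ge 2$, two atoms of $\Omb_{dm+1}^{(d)}$ that admit no least common upper bound, so that $\Omb_{dm+1}^{(d)}$ is not a lattice. (For $m=1$ the poset has rank $2$ — it consists of $\zh$, the $(dm+1)!$ atoms, and $\oh$ — and is trivially a lattice, so the argument will use $m\ge 2$, i.e. $dm+1\ge 2d+1$.)

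The key reduction is that $\Omb_{dm+1}^{(d)}\setminus\{\zh\}$ is an induced subposet of $\Om_{dm+1}$, which is a lattice by Theorem~\ref{OmGen}(f). Hence any common upper bound inside $\Omb_{dm+1}^{(d)}$ of two ordered partitions $\psi,\om$ must lie weakly above their join $J:=\psi\jn\om$ computed in $\Om_{dm+1}$. Consequently the common upper bounds of $\psi$ and $\om$ inside $\Omb_{dm+1}^{(d)}$ are exactly the coarsenings of $J$ (obtained by merging consecutive blocks) all of whose block sizes are $\equiv 1\pmod d$. So it suffices to choose $\psi,\om$ for which this family of valid coarsenings has two incomparable members, neither of which lies weakly above a common valid coarsening.

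For the construction I would take $\psi=(1,2,3,\dots,dm+1)$ and let $\om$ be obtained from $\psi$ by moving the entry $2$ so that it immediately follows the entry $d+1$ (for instance $\om=(1,3,2,4,5,\dots,dm+1)$ when $d=2$). Both are ordered partitions into singletons, hence atoms of $\Omb_{dm+1}^{(d)}$ by Theorem~\ref{Omn1dGen}(b). Running the join construction from the proof of Theorem~\ref{OmGen}(f): the first common prefix-union is $\{1\}$; for the next segments $(2,3,\dots,d+1)$ and $(3,4,\dots,d+1,2)$ no proper prefix-union agrees, so the next common block is the whole set $\{2,3,\dots,d+1\}$; thereafter the singletons $d+2,\dots,dm+1$ agree one at a time. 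Thus
$$
J=(\{1\},\ \{2,3,\dots,d+1\},\ \{d+2\},\ \{d+3\},\ \dots,\ \{dm+1\}),
$$
whose second block has size $d$; since $d\not\equiv 1\pmod d$ for $d\ge 2$, we have $J\notin\Omb_{dm+1}^{(d)}$.

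Finally I would analyze the valid coarsenings of $J$. In any coarsening, the block $B$ containing the entry $2$ is a union of consecutive blocks of $J$, so $B\supseteq\{2,3,\dots,d+1\}$ and $|B|=d+t$, where $t$ is the number of singleton blocks of $J$ absorbed into $B$; requiring $|B|\equiv 1\pmod d$ forces $t\equiv 1\pmod d$, hence $t\ge 1$. The only singleton blocks of $J$ adjacent to $\{2,\dots,d+1\}$ are $\{1\}$ on its left and $\{d+2\}$ on its right — this is exactly where $2\ge 2$ and $d+2\le dm+1$ (i.e. $m\ge 2$) are used. So taking $t=1$ produces the two incomparable valid coarsenings
$$
\om_1=(\{1,2,\dots,d+1\},\{d+2\},\dots,\{dm+1\}),\qquad \om_2=(\{1\},\{2,3,\dots,d+2\},\{d+3\},\dots,\{dm+1\}).
$$
No valid coarsening $z$ of $J$ lies weakly below both $\om_1$ and $\om_2$: its block containing $2$ would be contained in $\{1,\dots,d+1\}\cap\{2,\dots,d+2\}=\{2,\dots,d+1\}$ and, being a coarsening of $J$, would also contain it, hence equal $\{2,\dots,d+1\}$ — a block of size $d$, which is forbidden. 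Therefore the set of common upper bounds of $\psi$ and $\om$ in $\Omb_{dm+1}^{(d)}$ has no minimum, so $\psi\jn\om$ does not exist and $\Omb_{dm+1}^{(d)}$ is not a lattice. I do not anticipate a serious obstacle; the points most in need of care are the reduction in the second paragraph (which rests on $\Om_{dm+1}$ being a lattice) and the bookkeeping showing $\om_1$ and $\om_2$ have no common valid lower coarsening.
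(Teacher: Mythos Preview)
Your proof of part~(f) is correct, and it differs from the paper's in a useful way. The paper exhibits two rank-$2$ elements
\[
\psi'=(\{1\},\{2,\dots,d+2\},\{d+3\},\dots,\{dm+1\}),\qquad
\om'=(\{1\},\{2\},\{3,\dots,d+3\},\{d+4\},\dots,\{dm+1\})
\]
and observes that both are covered by the two distinct rank-$3$ elements
\[
z_1=(\{1,\dots,2d+1\},\{2d+2\},\dots,\{dm+1\}),\qquad
z_2=(\{1\},\{2,\dots,2d+2\},\{2d+3\},\dots,\{dm+1\}),
\]
so that no join exists. Your argument instead starts from two atoms, passes through their join $J$ in the ambient lattice $\Om_{dm+1}$, and shows that the minimal elements of $\Omb_{dm+1}^{(d)}$ lying above $J$ include the incomparable pair $\om_1,\om_2$ with no common refinement in $\Omb_{dm+1}^{(d)}$. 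The underlying obstruction is the same in both arguments: a block of size $d$ that can be ``repaired'' by absorbing a singleton on either side, giving two incompatible minimal fixes.

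One concrete advantage of your route: it works for every $m\ge 2$, whereas the paper's counterexample requires $m\ge 3$ (the element $z_2$ needs $2d+2\le dm+1$, which fails when $m=2$). Since $\Omb_{2d+1}^{(d)}$ is genuinely not a lattice for $d\ge 2$ (your $\om_1,\om_2$ already witness this), your argument closes that small gap. Your explicit remark that the $m=1$ case is a rank-$2$ poset and hence trivially a lattice is also a helpful clarification that the paper leaves implicit.
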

\bprf
(a)--(e)  These proofs  follow the same lines as in the demonstrations of Theorems~\ref{OmGen-d} and so are omitted.

\medskip

(f) We provide a counterexample, using the notation in~\eqref{[n]}.  Consider the elements of rank $2$ given by
$$
(1,[2,d+2],d+3,\ldots,dm+1) \qmq{and} 
(1,2,[3,d+3],d+4,\ldots,dm+1).
$$
Then these are covered by both 
$$
([1,2d+1],2d+2,\ldots,dm+1) \qmq{and} 
(1,[2,2d+2],2d+3,\ldots,dm+1)
$$ 
so they have no join.
\eprf

We will now show that the M\"obius function on intervals in $\Omb_{dm+1}^{(d)}$ not containing $\zh$ is given, up to sign, by a $k$-Catalan number as defined below.  This follows from Lemma 3A about lattice paths in Chapter 1 in Narayana's book~\cite{nar:lpc} for which he indicates two proofs, one by inclusion-exclusion and one using determinants.  In Stanley's text~\cite{sta:cn}, he asks for a proof of the same formula for an isomorphic poset in Problem A18.  This problem was solved using algebraic manipulations by Kim and Stanton~\cite{KS}.  We will give a combinatorial proof using a sign-reversing involution on lattice paths.  For simplicity, we will begin with the case $d=2$.  We will need the following lemma.
\ble
\label{OmCrk}
Let  $a\in\Omb_{2m+1}^{(2)}$ be an atom.  Then
$$
\#\{ \om\in[a,\oh]\ \mid\ \crk \om = k\} =\binom{m+k}{2k}.
$$
\ele
\bprf
If $\om\in\Omb_{2m+1}^{(2)}$ then every block of $\om$ has odd size.
And we know from part (d) of the previous theorem that $\om$ has an odd number of blocks.  We will first look at the number of compositions which could be the type of such an $\om$.

Fix $k$ and $m$ and consider the quantity
$$
A_{m,k} := \#\{\al=(\al_1,\ldots,\al_{2k+1})\ 
\mid\ \al\comp 2m+1 \text{ and all $\al_i$ are odd}\}.
$$
Letting $\be_i=\al_i+1$ for all $i$ we see that
$$
A_{m,k} =\#\{\be=(\be_1,\ldots,\be_{2k+1})\ 
\mid\ \be\comp 2m+2k+2 \text{ and all $\be_i$ are even}\}.
$$
Now defining $\ga_i = \be_i/2$ for $1\le i\le 2k+1$ gives
\beq
\label{Amk}
A_{m,k}= \#\{\ga=(\ga_1,\ldots,\ga_{2k+1})\ 
\mid\ \ga\comp m+k+1\} 
=\binom{m+k}{2k}.
\eeq

We now return to the problem of counting the number of $\om\in[a,\oh]$ of corank $k$.  By Theorem~\ref{Omn1dGen} we know that $\om$ has $2k+1$ blocks.
So its type is one of the compositions in the set defining $A_{m,k}$ above.
But each such type corresponds to a unique $\om$ in the interval because once the type is specified, the blocks must be filled consistent with the fact that $\om\ge a$.  It follows that the binomial coefficient in equation~\eqref{Amk} also enumerates the ordered set partitions in question.
\eprf

We now review the necessary facts about sign-reversing involutions and lattice paths.  Let $S$ be a finite set and let $\io:S\ra S$ be an involution, i.e., a bijection such that $\io^2 = \io$.  Considering $\io$ as a permutation of $S$, it can be decomposed into cycles.  And $\io$ is an involution if and only if each of these cycles has length $1$ (a fixed point), or length $2$.  Now suppose that $S$ is signed in that one is given a function $\sgn:S\ra\{+1,-1\}$.  We say that  $\io$ is {\em sign reversing} if
\ben
\item[(i1)]  For every fixed point $(s)$ we have $\sgn s = 1$, and
\item[(i2)]  For every $2$-cycle $(s,t)$ we have
$$
\sgn s = -\sgn t.
$$
\een
It follows that 
\beq
\label{IoSum}
\sum_{s\in S}\sgn s = \#\Fix\io
\eeq
where $\Fix\io$ is the set of fixed points of $\io$.  This is because, by (i1), all fixed points have positive sign.  And, by (i2), the summands corresponding to elements in $2$-cycles cancel in pairs.

\begin{figure}
    \centering
\begin{tikzpicture}
\draw(1,4) node{$M=$};
\filldraw(0,0) circle(.1);
\filldraw(0,1) circle(.1);
\filldraw(0,2) circle(.1);
\filldraw(1,3) circle(.1);
\filldraw(2,3) circle(.1);
\filldraw(3,4) circle(.1);
\filldraw(4,5) circle(.1);
\filldraw(4,6) circle(.1);
\filldraw(5,6) circle(.1);
\filldraw(6,6) circle(.1);
\draw (0,0)--(0,2)--(1,3)--(2,3)--(4,5)--(4,6)--(6,6);
\draw[->] (0,0)--(7,7);
\draw(7,7.5) node{$y=x$};
\end{tikzpicture}
\qquad
\begin{tikzpicture}
 \draw(-1.5,2) node{$M\da=$};
\filldraw(0,0) circle(.1);
\filldraw(0,1) circle(.1);
\filldraw(0,2) circle(.1);   
\filldraw(1,2) circle(.1);
\filldraw(1,3) circle(.1);
\filldraw(2,3) circle(.1);   
\filldraw(3,3) circle(.1);
\draw (0,0)--(0,2)--(1,2)--(1,3)--(3,3);
\draw[->] (0,0)--(4,4);
\draw(4,4.5) node{$y=x$};
\end{tikzpicture}
    \caption{The restriction of a Motzkin path}
    \label{Mres}
\end{figure}

A {\em lattice path} is a sequence of points $P:p_0,p_1,\ldots,p_k$ in the integer lattice $\bbZ^2$.  These points are connected by line segments called {\em steps}.  The {\em length} of $P$, $\ell(P)$, is the number of steps. We will use the following steps, called {\em north} (N), {\em east} (E), and {\em diagonal} (D), which go from $p=(x,y)$ to 
$(x,y+1)$, $(x+1,y)$, and $(x+1,y+1)$, respectively.  A {\em Motzkin path} is a lattice path, $M$, satisfying
\ben
\item[(m1)] $M$ starts at $(0,0)$ and ends at $(n,n)$ for some $n$, 
using steps $N$, $E$, and $D$,   and
\item[(m2)] $M$ never goes below the line $y=x$.
\een
We will often specify a Motzkin path by listing its sequence of steps.  In such a sequence, a consecutive pair of steps of the form $NE$ will be called a {\em corner}.
To illustrate,  the Motzkin path $M$ of Figure~\ref{Mres} has a unique corner formed by the seventh and eighth steps.
A {\em Dyck path} is a Motzkin path with no diagonal steps.
The {\em restriction} of a Motzkin path $M$ is the Dyck path $M\da$ obtained by removing all the diagonal steps of $M$ and concatenating what remains.  An example can be found in Figure~\ref{Mres} where
$M=NNDEDDNEE$ and $M\da = NNENEE$.

The last bit of notation we need is that for the {\em Catalan numbers}
$$
C_n = \frac{1}{n+1}\binom{2n}{n}.
$$
It is well known that $C_n$ is the number of Dyck paths $P$ ending at $(n,n)$, equivalently, those with $\ell(P)=2n$.

\begin{figure}
    \centering
 \begin{tikzpicture}
\filldraw(0,0) circle(.1);
\filldraw(1,1) circle(.1);
\filldraw(2,2) circle(.1);
\draw (0,0)--(2,2);
\draw[<->] (3,1)--(5,1);
 \end{tikzpicture}
 \qquad
\begin{tikzpicture}
\filldraw(0,0) circle(.1);
\filldraw(0,1) circle(.1);
\filldraw(1,1) circle(.1);
\filldraw(2,2) circle(.1);
\draw (0,0)--(0,1)--(1,1)--(2,2);
 \end{tikzpicture}   

\vs{20pt}

 \begin{tikzpicture}
\filldraw(0,0) circle(.1);
\filldraw(0,1) circle(.1);
\filldraw(1,1) circle(.1);
\filldraw(1,2) circle(.1);
\filldraw(2,2) circle(.1);
\draw (0,0)--(0,1)--(1,1)--(1,2)--(2,2);
\draw[<->] (3,1)--(5,1);
 \end{tikzpicture}
 \qquad
\begin{tikzpicture}
\filldraw(0,0) circle(.1);
\filldraw(1,1) circle(.1);
\filldraw(1,2) circle(.1);
\filldraw(2,2) circle(.1);
\draw (0,0)--(1,1)--(1,2)--(2,2);
 \end{tikzpicture}   

 \vs{20pt}

  \begin{tikzpicture}
\filldraw(0,0) circle(.1);
\filldraw(0,1) circle(.1);
\filldraw(0,2) circle(.1);
\filldraw(1,2) circle(.1);
\filldraw(2,2) circle(.1);
\draw (0,0)--(0,2)--(2,2);
\draw[<->] (3,1)--(5,1);
 \end{tikzpicture}
 \qquad
\begin{tikzpicture}
\filldraw(0,0) circle(.1);
\filldraw(0,1) circle(.1);
\filldraw(1,2) circle(.1);
\filldraw(2,2) circle(.1);
\draw (0,0)--(0,1)--(1,2)--(2,2);
 \end{tikzpicture}   
    \caption{The involution $\io$ of Theorem~\ref{muOmc:d=2} when $m=2$}
    \label{ioEx}
\end{figure}

\bth
\label{muOmc:d=2}
For any ordered set partitions $\psi\le \om$ in $\Omb_{2m+1}^{(2)}$ 
with $\rk(\psi,\om)=k$ we have
$$
\mu(\psi,\om) = (-1)^k C_k
$$
\eth
\bprf   By Theorem~\ref{Omn1dGen}, it suffices to prove the result for intervals of the form $[\psi,\oh]$.  We will induct on $\crk\psi$, where the base case is easy.  By induction, we can assume that $\psi$ is an atom.  Now $\mu(\psi,\oh)$ is uniquely defined as the solution to the equation
$$
\sum_{\om\in[\psi,\oh]}\mu(\om,\oh) = 0
$$
where the values $\mu(\om,\oh)$ for $\om>\psi$ are already known by induction.  Since the solution is unique, it suffices to prove that
\beq
\label{SignedCSum}
\sum_{\om\in[\psi,\oh]} (-1)^{\crk\om}\  C_{\crk\om} = 0.
\eeq
But using Lemma~\ref{OmCrk} we obtain
$$
\sum_{\om\in[\psi,\oh]} (-1)^{\crk\om}\  C_{\crk\om}
= \sum_{k=0}^m\ \sum_{\substack{\om\in[\psi,\oh] \\[2pt] \crk\om=k}} (-1)^k C_k
=\sum_{k=0}^m  (-1)^k \binom{m+k}{2k} C_k.
$$
The (unsigned) $k$th term in this last sum is just the number of Motzkin paths 
$M$ ending at $(m,m)$ with $m+k$ steps such that $M\da$ has length $2k$.  Indeed, start with the Motzkin path $M_0$ consisting of $m+k$ diagonal steps.  Now choose $2k$ of them to be replaced by a step $N$ followed by a step $E$.  Finally, given one of the  $C_k$ Dyck paths $P$,
there is a unique Motzkin path $M$ such that $M\da = P$ and the orthogonal projection of the $D$ steps of $M$ onto the line $y=x$ are the ones not chosen to be replaced in $M_0$.  
Also, note that the sign associated to $M$ is $(-1)^{\ell(P)/2}$.

 By~\eqref{IoSum} we will be done if we can find a sign-reversing involution, $\io$, without fixed points. Scan one of the Motzkin paths, $M$, in question from left to right until one finds the first occurrence of either a diagonal step or a pair of steps forming a corner.  Define $\io(M)=M'$ where $M'$ is formed by switching this first occurrence from a diagonal step to a corner or vice-versa.  See Figure~\ref{ioEx} for an example where the paths of positive sign are on the left and those of negative sign are on the right.  By the definition of the map, it is clearly an involution.  And it is also sign-reversing since the lengths of $M\da$ and $M'\da$ differ by $2$.  
 Since any nontrivial Motzkin path has either a diagonal step or a corner, $\io$ has no fixed points.
 This completes the proof.
\eprf

To describe the M\"obius function of intervals in $\Omb_{dm+1}^{(d)}$ not containing $\zh$ for all $d$, we need a generalization of the Catalan numbers.  For $n\ge0$ and $k\ge1$, the {\em $k$-Catalan numbers} are
$$
C_{n,k} = \frac{1}{(k-1)n+1}\binom{kn}{n}.
$$
Note that $C_{n,1}=1$ and  $C_{n,2} = C_n$.  The associated generating function
$$
C_k(x) = \sum_{n\ge0} C_{n,k} x^n
$$
satisfies the implicit relation
$$
C_k(x) = 1 + x (C_k(x))^k.
$$

To describe the associated lattice paths, define a {\em $k$-diagonal} step, $D^{(k)},$ to be one which goes from a lattice point $(x,y)$ to $(x+1,y+k-1)$.  So $D^{(2)} = D$.  A {\em $k$-Motzkin path} is defined by 
\ben
\item[(M1)] $M$ starts at $(0,0)$ and ends at $(n,(k-1)n)$ for some $n$,
using steps $N$, $E$, and $D^{(k)}$,   and
\item[(M2)] $M$ never goes below the line $y=(k-1)x$.
\een
A {$k$-corner} in such a path is $k-1$ steps $N$ followed by $1$ step $E$.
A {\em $k$-Dyck path} is a $k$-Motzkin path with no $k$-diagonal steps.  It is well known that the number of $k$-Dyck paths ending at $(n, (k-1)n)$ is $C_{n,k}$.
The {\em restriction} of a $k$-Motzkin path is defined similarly to the case $k=2$.
\bth
\label{thm:muOmb}
For all $d\ge1$ and  ordered set partitions $\psi\le \om$ in $\Omb_{dm+1}^{(d)}$ 
with $\rk(\psi,\om)=k$ 
\beq
\label{muOmb}
\mu(\psi,\om) = (-1)^k C_{k,d}.
\eeq
\eth
\bprf  The case $d=1$ follows from Theorem~\ref{OmndComb} and the fact that $\cE_n^{(1)}=(-1)^n$.
So, we assume that $d\ge2$.
Much of the demonstration parallels that of the case when $d=2$, so we will only provide details for the differences. Again, we need to prove equation~\eqref{SignedCSum} where now $\psi$ lies in $\Omb_{dm+1}^{(d)}$.
Using a $d$-analogue of Lemma~\ref{OmCrk}, we see that this is equivalent to
$$
\sum_{k=0}^m  (-1)^k \binom{(d-1)k+m}{dk} C_{k,d}=0.
$$
Now the terms in the sum count $d$-Motzkin paths ending at $(m,(d-1)m)$ with 
$(d-1)k+m$ steps such that $M\da$ has $dk$ steps.  The involution switches a $d$-diagonal step with a $d$-corner in the same way as when $d=2$.  The reader should now be able to fill in the details.
\eprf

We will now discuss the M\"obius values $\mu(\zh,\om)$ in $\Omb_{dm+1}^{(d)}$.  It is easy to see that if $\om=(B_1,\ldots,B_\ell)$ then we have the reduced product
\beq
\label{ZhOmbRp}
[\zh,\om]\iso \Omb_{\#B_1}^{(d)} \dot{\times}\cdots \dot{\times}\, \Omb_{\#B_\ell}^{(d)}.
\eeq
So, by Theorem~\ref{PxQ}, it suffices to compute $\mu(\Omb_{dm+1}^{(d)})$.
Motivated by Theorem~\ref{OmndComb} (b), let us define the {\em remainder $1$ Euler numbers} by
\beq
\label{cEbDef}
\cEb_{n}^{(d)}=\mu(\Omb_n^{(d)})
\eeq
where $\mu$ is zero if $n\not\Cong 1\ (\Mod d)$.  
This notation is extended to ordered set partitions $\om=(B_1,\ldots,B_\ell)$
by letting
$$
\cEb_\om^{(d)} = \cEb_{\#B_1}^{(d)}\cdots \cEb_{\#B_\ell}^{(d)}.
$$
On the generating function level, we let
$$
\cEb_d(x) = \sum_{n\ge1} \cEb_{n}^{(d)}\ \frac{x^n}{n!}.
$$
We will also need the series
$$
F_d(x) = \sum_{m\ge 0}\frac{x^{dm+1}}{(dm+1)!}.
$$

Other notation includes
$$
\om\compb_d\ [n]
$$
to indicate that $\om$ is an ordered set partition of $[n]$ where all parts have size  congruent to $1$ modulo $d$.  Finally we will need the round up and round down functions $\ce{\ell/d}$ and $\fl{\ell/d}$, respectively.
Here are some of the properties of the $\cEb_{n}^{(d)}$. 
\bth
\label{bOmndComb}
Suppose $d,n\ge1$ and $n\Cong 1\ (\Mod d)$.
\ben
\item[(a)] We have
$$
\cEb_{n}^{(d)} = \sum_{\om\compb_d \hs{2pt} [n]} 
(-1)^{\ce{\ell/d}}\ C_{\fl{\ell/d},d}
$$
where $\ell=\ell(\om)$.
\item[(b)] We have
$$
\cEb_{n}^{(d)} = -1+\sum_{\substack{\om\compb_d \hs{2pt} [n]
\\[4pt] 
\om\neq([n])}}
(-1)^\ell\ \cEb_\om^{(d)}
$$
where $\ell=\ell(\om)$.
\item[(c)]  We have
$$
\cEb_d(x) = -F_d(x)\cdot C_d( -F_d(x)^d ).
$$
\item[(d)]  We have
$$
\cEb_d(x) = -F_d(x)\cdot (1-(-1)^d\cEb_d(x)^d).
$$
\een
\eth
\bprf
(a)
Recall definitions~\eqref{cEbDef} and~\eqref{muDn}, equation~\eqref{muOmb}, as well as the fact that in $\Omb_n^{(d)}$ we have $\crk\om = \fl{\ell/d}$
where $\ell=\ell(\om)$.  Putting these together gives
\begin{align*}
\cEb_{n}^{(d)} &=\mu(\Omb_{n}^{(d)})\\
&= - \sum_{\om\compb_d \hs{2pt} [n]} \mu(\om,\oh)\\
&= - \sum_{\om\compb_d \hs{2pt} [n]} (-1)^{\fl{\ell/d}}\ C_{\fl{\ell/d},d}\\
\end{align*}
and bringing the negative sign inside the sum completes the proof of this part.

\medskip
(b)  Using definition~\eqref{cEbDef}, equation~\eqref{ZhOmbRp}, and Theorem~\ref{PxQ} we see that if an ordered set partition has $\type(\om)=(\al_1,\ldots,\al_\ell)$ then
$$
\mu(\zh,\om)
= (-1)^{\ell-1}\mu(\Omb_{\al_1}^{(d)})\cdots \mu(\Omb_{\al_\ell}^{(d)})
= (-1)^{\ell-1}\cEb_\om^{(d)}.
$$
Noting that this also applies to $\om=([n])$ and that $\mu(\zh,\zh)=1$, we can bring all the terms in the desired equality over to the left side and write it as
$$
\sum_{x\in\Omb_n^{(d)}} \mu(\zh,x) = 0.
$$
where $x$ runs over both the ordered set partitions and $\zh$ in $\Omb_n^{(d)}$.
But this is true by definition~\eqref{muUp}.

\medskip

(c)  Similar to the proof of Corollary~\ref{OmdGfs} we have, for any exponential generating function of the form $E(x) = c_1 x/1! + c_2 x^2/2!+ \cdots$,
\begin{align}
  E(x)^{\ell}
  &= \sum_{n\ge \ell} \left(\sum_{\al=(\al_1,\ldots\al_\ell)\comp n }
  \frac{c_{\al_1}x^{\al_1}}{\al_1!}\cdots\frac{c_{\al_\ell}x^{\al_\ell}}{\al_\ell!}    \right)
  \notag
  \\
  &= \sum_{n\ge \ell}\left(\sum_{\al\comp n }
  \binom{n}{\al} c_{\al_1}\dots c_{\al_\ell}   \right) \frac{x^n}{n!}
  \notag
  \\
    &= \sum_{n\ge \ell}\left(\sum_{\om=(B_1,\ldots, B_\ell)\comp [n] }
     c_{\#B_1}\dots c_{\#B_\ell}\right) \frac{x^n}{n!}
     \label{egf}
\end{align}
where the last equality comes from the fact that the number of ordered set partitions $\om$ with $\type\om =\al$ is the multinomial coefficent
$$
\binom{n}{\al}=\frac{n!}{\al_1! \cdots,\al_\ell!}.
$$

Write $n=dm+1$ and note that  if $\om\in\Omb_n^{(d)}$ then 
$\ell(\om)$ and the size of its blocks are congruent to $1$ modulo $d$.  Combining these facts with part (a) and the computations in the previous paragraph gives
$$
\cEb_d(x) 
= -C_{0,d} F_d(x) + C_{1,d}(x) F_d(x)^{d+1} - C_{2,d} F_d(x)^{2d+1}-\cdots.
$$
Factoring out $-F_d(x)$ and simplifying completes the proof of part (c).

\medskip

(d)  From part (b) we have
$$
0 = -1 + \sum_{\om\compb_d \hs{2pt} [n]}
(-1)^\ell\ \cEb_\om^{(d)}
$$
Let $n=dm+1$, multiply the previous displayed equation by $x^{dm+1}/(dm+1)!$,  sum over $m$, and use~\eqref{egf} to get
$$
0 = -F_d(x) + \sum_{m\ge0} (-1)^{dm+1}\cEb_d(x)^{dm+1}
= -F_d(x) -\frac{\cEb_d(x)}{1-(-1)^d\cEb_d(x)^d}.
$$
Solving for $\cEb_d(x)$ finishes the demonstration of (d) and of the theorem.
\eprf

One could hope that $\Omb_n^{(d)}$ has a recursive atom ordering.  Unfortunately, this is not the case, at least if one uses the same lexicographic order employed for $\Om_{dn}^{(d)}$ in the proof of Theorem~\ref{Om^d-RAO}.  
Consider what happens when $n=7$ and $d=2$.
By Theorem~\ref{Omn1dGen} (f), $\Omb_n^{(d)}$ need not be a lattice, much less semimodular.  So we need to show that condition (R1) in the definition of an RAO holds for the full poset
$\Omb_7^{(2)}$.
Consider the atom $a=(4,1,2,3,5,6,7)$.  Then the interval $[a,\oh]$ contains the atoms and $\psi=(4,1,2,653,7)$ and 
$\om=(421,3,5,6,7)$.  Note that lexicographically $\psi<_l \om$ since $4<_l 421$.  But $a$ is the smallest atom of 
$\Omb_7^{(2)}$ below $\psi$ since all such atoms are of the form $(4,1,2,a,b,c,7)$ where $abc$ is a permutation of $3,5,6$,  And
$356$ as it appears in $a$ is the lexicographically  smallest such permutation.  On the other hand, $\om$ covers $(1,2,4,3,5,6,7)$ which is lexicographically smaller than $a$.   So (R1) is violated.
But perhaps something can still be said, even for the weaker condition of shellability.
\begin{question}
Is $\Omb_n^{(d)}$ shellable?
\end{question}

It would be natural to consider posets derived from ordered set partitions of $n$ where all block sizes are congruent to $r$ modulo $d$ for $r>1$ and with a $\zh$ added.    Unfortunately, these posets do not seem to be well behaved.  In fact, they are not even graded.  For example, take $r=2$, $d=3$, and $n=14$.  Then, using the notation in~\eqref{[n]}, one maximal chain in this poset is 
$$
\zh < ([1,5],\ [6,10],\ [11,12],\ [13,14]) < \oh
$$
which is of length $2$.
On the other hand, the maximal chains containing the atom
$$
([1,2],\ [3,4],\ [5,6],\ [7,8],\ [9,10],\ [11,12],\ [13,14])
$$
all have length $3$.

\begin{question}
What is the M\"obius function of the poset of ordered partitions on $n$ with all block sizes congruent to $r>1$?  Is it shellable?  Does it have interesting homology representations?
\end{question}

\medskip

{\em Acknowledgements.}  We are very grateful to the anonymous referees for their generous investment of time and effort to provide exceptionally detailed and instructive reports. In particular, consideration of the reports led us to include Theorem~\ref{thm:sdP-rksel-recS} in our revisions. 

We wish to thank Jang Soo Kim and Richard Stanley for useful discussions.
The second author thanks Toufik Mansour for stimulating conversations.
We also thank Russ Woodroofe and Victor Reiner for pointing out relevant references and suggesting improvements.






\nocite{*}
\bibliographystyle{alpha}

\bibliography{2026Mar16REVospref}

\end{document}